\documentclass[a4paper, reqno]{amsart}
\usepackage{amsthm, amsmath, amssymb, amsfonts, amsopn, color,  geometry}
\usepackage{enumerate}
 
  \usepackage{graphicx}
    \usepackage{subfig}
    \usepackage{floatrow}
 
\usepackage[all]{xy}
 
\geometry{textheight=622pt, textwidth=470pt, centering, headheight=50pt,
headsep=12pt, footskip=18pt, footnotesep=24pt plus 2pt minus 12pt, columnsep=2pc}

\makeatletter
\newcommand*{\rom}[1]{\expandafter\@slowromancap\romannumeral #1@}
\makeatother

\makeatletter
\@namedef{subjclassname@2020}{%
  \textup{2020} Mathematics Subject Classification}
\makeatother

\newcommand{\e}{\varepsilon}

\newcommand{\Z}{\mathbb{Z}}
\newcommand{\R}{\mathbb{R}}

\newcommand{\N}{\mathbb{N}}

\newcommand{\be}{\begin{equation}}

\newcommand {\ee} {\end{equation}}
\theoremstyle{definition}
\newtheorem*{defn}{Definition}
\theoremstyle{plain}
\newtheorem*{thm*}{Theorem}
\theoremstyle{plain}
\newtheorem{thm}{Theorem}[section]
\theoremstyle{plain}
\newtheorem{lem}[thm]{Lemma}
\theoremstyle{plain}
\newtheorem{cor}[thm]{Corollary}
\theoremstyle{plain}
\newtheorem{prop}[thm]{Proposition}
\theoremstyle{remark}
\newtheorem{rmk}[thm]{Remark}


\begin{document}
\title[Multi-bump standing waves for  nonlinear Schr\"{o}dinger equations]{
Multi-bump standing waves for  nonlinear Schr\"{o}dinger equations with a general nonlinearity: the topological effect of potential wells}
\author{ Sangdon Jin}
\address[Sangdon Jin]  {Stochastic Analysis and Application Research Center, KAIST, 291 Daehak-ro, Yuseong-gu, Taejon, 305-701, Republic of Korea}
\email{sdjin@kaist.ac.kr}

\begin{abstract}
In this article, we are interested in multi-bump solutions of the singularly perturbed problem
\begin{equation*}
-\epsilon^2\Delta v+V(x)v=f(v) \ \  \mbox{ in }\ \  \R^N.
\end{equation*}
Extending  previous results \cite{B, DLY,W1}, we prove the existence of multi-bump solutions for an optimal class of nonlinearities $f$   satisfying the Berestycki-Lions conditions  and, notably,  also for more general classes of  potential wells    than those previously studied. We devise two novel topological arguments to deal with  general classes of  potential wells. Our results prove  the existence of multi-bump  solutions in which  the centers of  bumps converge toward potential wells as $\epsilon\rightarrow 0$. Examples  of potential wells  include the following: the union of two compact smooth submanifolds of $\R^N$ where these two submanifolds meet at the origin and   an embedded topological submanifold of $\R^N$. 
\end{abstract}

\date{\today}
\subjclass[2020]{35J20, 35B25, 35Q55, 58E05}
\keywords{Nonlinear Schr\"{o}dinger equations, Singular perturbation, semi-classical standing waves, Variational methods}
\maketitle

\section {Introduction} 
 One of the best-known nonlinear partial differential equations is the  nonlinear Schr\"{o}dinger equation, which appears   in a variety of physical contexts including Bose-Einsten condensation \cite{PS}, nonlinear atom optics \cite{M1, M2} and many others. 
This paper is devoted to the study  of  the  standing wave solutions  to the nonlinear Schr\"{o}dinger equation
\begin{equation}\label{b1}
i\hbar\frac{\partial\psi}{\partial t}+\frac{\hbar^2}{2}\Delta \psi -V(x)\psi+f(\psi)=0,\ \ (t,x)\in \R\times \R^N,
\end{equation}
where $\hbar$ denotes the Planck constant, $i$ is the imaginary unit and assume that $f(\exp(i\theta)s)=\exp(i\theta)f(s)$ for $s, \theta \in \R$. Considering the standing wave solutions of the form $\psi(x,t)=\exp(-iEt/\hbar)v(x)$, $\psi$ is a standing wave  solution of \eqref{b1} if and only if  the function $v$ satisfies 
\begin{equation*}
  \frac{\hbar^2}{2}\Delta v-(V(x)-E)v+f(v)=0 \ \mbox{ in }\ \R^N.
\end{equation*}
We examine standing waves of the nonlinear Schr\"{o}dinger equation \eqref{b1} for small $\hbar>0$, which are referred to as semi-classical states.
 As a result, we focus on the following equation
\begin{equation}\label{SPP}
  \begin{cases}
  \epsilon^2\Delta v-V(x)v+f(v)=0 & \mbox{in } \R^N, \\
  v(x)>0 & \mbox{in } \R^N,\\
  v(x)\rightarrow0  & \mbox{as }|x|\rightarrow \infty.
\end{cases}
\end{equation}
  Throughout the paper, we assume that the potential $V$ satisfies
\begin{enumerate}[(V1)]
  \item $V\in C(\R^N,\R)$ and $0<V_0=\inf_{x\in \R^N}V(x)\le \sup_{x\in \R^N}V(x)<\infty$;
  \item there is a bounded domain $O$ such that $m=\inf_{x\in O}V(x)<\inf_{x\in \partial O}V(x)$.
\end{enumerate}
Set $\mathcal{M}\equiv \{ x\in O\ |\ V(x)=m\}$.  Studying the solutions to \eqref{SPP} requires considering a limiting problem \begin{equation}\label{b3}
  \Delta u-mu+f(u)=0 \mbox{ in } \R^N.
\end{equation}
  Berestycki and Lions showed in their celebrated paper \cite{BL} that the limiting problem \eqref{b3} has a least-energy solution under the following conditions 
\begin{enumerate}[(f1)]
  \item $f\in C(\R,\R)$ and $f(0)=\lim_{t\rightarrow 0}\frac{f(t)}{t}=0;$
  \item there exist $\bar{C}>0$ and $p\in (1, \frac{N+2}{N-2})$ such that
  \[
  |f(t)|\le \bar{C}(1+t^p) \mbox{ for all } t\ge 0;
  \]
  \item there exists a $t_0>0$ such that
  \[ F(t_0)>\frac{1}{2}m t_0^2, \ \ \mbox{ where } \ \ F(t)=\int_{0}^{t}f(s)ds.
  \]
\end{enumerate}

The groundbreaking work of Floer and Weinstein \cite{FW} is the first research in this direction. Adopting the Lyapunov-Schmidt method, they showed that as $\e\rightarrow0$ a family of single peak solutions of \eqref{SPP} concentrates around a non-degenerate critical point of $V$.  The Lyapunov-Schmidt method requires a linearized non-degeneracy condition, namely, that for a critical point $x_0\in \R^N$ of $V$ and a positive solution $U_{x_0}\in H^1(\R^N)$ of a limiting  problem $\Delta u-V(x_0)u+f(u)=0 \mbox{ in } \R^N$, if $\phi\in H^1(\R^N)$ satisfies $\Delta\phi-V(x_0)\phi+f^\prime(U_{x_0})\phi=0$ in $\R^N$, then $\phi=\sum_{i=1}^N a_i\frac{\partial U_{x_0}}{\partial x_i}$ for some $a_i\in \R$. Subsequently, using the Lyapunov-Schmidt method, many authors have obtained more refined results; see  \cite{ABC, AMN, DKW, KW, L, O1, O2}. 

In \cite{R},  Rabinowitz developed a variational approach, which does not require the non-degeneracy condition for limiting problems. Applying the mountain pass theorem, Rabinowitz showed  the existence of a least-energy solution of \eqref{SPP} for small $\e>0$ when 
\[
\liminf_{|x|\rightarrow\infty}V(x)>\inf_{x\in \R^N}V(x).
\]
In \cite{W}, Wang showed that the maximum point of this solution converges to the global minimum points of $V$ as $\e\rightarrow0$. In \cite{DF1},
del Pino and Felmer  proved  the existence of a single peak solution  whose maximum point converges to  potential wells as $\e\rightarrow0$. Other researchers have further developed variational methods for the construction of solutions under an optimal class of nonlinearities $f$. In \cite{BJ1, BJT, BT1}, Byeon, Jeanjean and Tanaka constructed  a single peak solution  of \eqref{SPP}  whose maximum point  converges to local minimum points or non-minimum (topologically stable) critical points of $V$ as $\e\rightarrow0$ when the nonlinearity $f$ satisfies (f1)-(f3).  In \cite{BT2}, Byeon and Tanaka proved the existence of a family of solutions of \eqref{SPP} with clustering peaks around an isolated set of critical points of $V$ that are non-minimal when the nonlinearity $f$ satisfies (f1)-(f3) as well as the following condition
\begin{enumerate}[(f4)]
  \item $f\in C^1(\R,\R)$ and there exists a $q_0\in (0,1)$ such that
  \[
  \lim_{t\rightarrow0}\frac{f(t)}{t^{1+q_0}}=0.
  \]
\end{enumerate}
We refer to \cite{ BJ1, BJ2, BJT, BT1, CJT, DPR, DF1, DF2, DF3, DF4} for further research based on  the variational argument of Rabinowitz.

In this paper, we focus on the existence of multi-bump solutions to \eqref{SPP}.  Byeon \cite{B} and  Wang \cite{W1} used the principle of symmetric criticality \cite{P} to construct  multi-bump solutions of \eqref{SPP} in which the centers of bumps converge to the sphere in $\R^N$ as $\e\rightarrow0$ when $V(x)=(|x|^2-5)^2+1$ for the first time. On the other hand, in \cite{DLY}, Dancer , Lam and Yan   showed, via the Lyapunov-Schmidt method,   the existence of multi-bump solutions of \eqref{SPP} in which the centers of bumps converge to $\mathcal{N}$ as $\e\rightarrow0$ when $f(t)=t^p$, $V(x)=dist(x,\mathcal{N})+1$ and $\mathcal{N}$ is an $(N-1)$-dimensional $C^1$ manifold without boundary. However,      the Lyapunov-Schmidt method can be used for     both a rather restricted  class of nonlinearities $f$ and   potential wells.  
Indeed,  \cite{DLY} requires not only monotone property of $f$, but also  a $C^1$ differential structure on potential wells in order to prove the existence of multi-bump solutions. Although  the set $S^{N-1}\cup (S^{N-1}+(0,\cdots,0,2))$ plays  a key role   in the Roche potential \cite{roche}, which describes the distorted shape of a star in a close binary system, these methods in \cite{B, DLY, W1} do not always guarantee the existence of multi-bump solutions to \eqref{SPP} when $f(t)=t^p$, $V(x)=dist(x,\mathcal{M})+1$ and $\mathcal{M}=S^{N-1}\cup (S^{N-1}+(0,\cdots,0,2))$ (see figure 1).  The goal of this work is to find essential  conditions on potential wells under which we can prove the existence of multi-bump solutions to \eqref{SPP}.

\begin{figure}
\subfloat{
\begingroup%
  \makeatletter%
  \providecommand\color[2][]{%
    \errmessage{(Inkscape) Color is used for the text in Inkscape, but the package 'color.sty' is not loaded}%
    \renewcommand\color[2][]{}%
  }%
  \providecommand\transparent[1]{%
    \errmessage{(Inkscape) Transparency is used (non-zero) for the text in Inkscape, but the package 'transparent.sty' is not loaded}%
    \renewcommand\transparent[1]{}%
  }%
  \providecommand\rotatebox[2]{#2}%
  \newcommand*\fsize{\dimexpr\f@size pt\relax}%
  \newcommand*\lineheight[1]{\fontsize{\fsize}{#1\fsize}\selectfont}%
  \ifx\svgwidth\undefined%
    \setlength{\unitlength}{199.23838806bp}%
    \ifx\svgscale\undefined%
      \relax%
    \else%
      \setlength{\unitlength}{\unitlength * \real{\svgscale}}%
    \fi%
  \else%
    \setlength{\unitlength}{\svgwidth}%
  \fi%
  \global\let\svgwidth\undefined%
  \global\let\svgscale\undefined%
  \makeatother%
  \begin{picture}(1,0.65903493)%
    \lineheight{1}%
    \setlength\tabcolsep{0pt}%
    \put(0,0){\includegraphics[width=\unitlength,page=1]{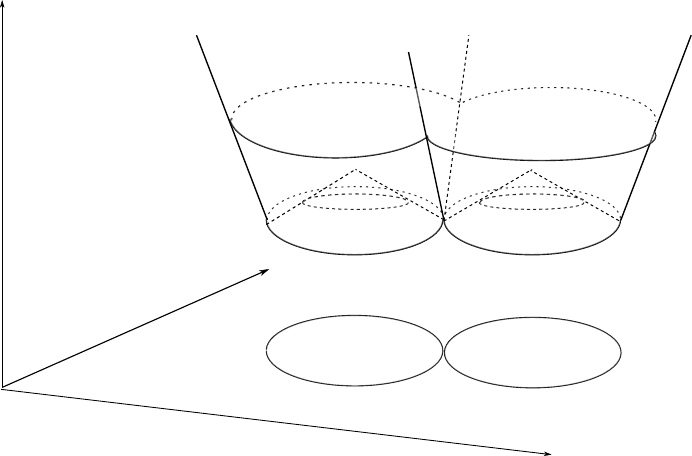}}%
    \put(0.57453817,0.04794628){\makebox(0,0)[lt]{\lineheight{1.25}\smash{\begin{tabular}[t]{l}$\mathcal{M}=S^{N-1}\cup (S^{N-1}+(0,\cdots,0,2))$\end{tabular}}}}%
    \put(0.6572766,0.632263){\makebox(0,0)[lt]{\lineheight{1.25}\smash{\begin{tabular}[t]{l}$V(x)=\operatorname{dist}(\mathcal{M},x)+1$\end{tabular}}}}%
  \end{picture}%
\endgroup%
}  
\caption{ }
\label{fig1}
\end{figure}

 In this study, we prove the existence of multi-bump solutions of \eqref{SPP} in which the centers of bumps  converge toward general sets $\mathcal{M}$ under the conditions (f1)-(f4).  
More specifically, we devise two novel topological arguments to  investigate the existence of multi-bump solutions of \eqref{SPP} in which the centers of bumps converge toward either the union of two compact smooth submanifolds of $\R^N$ where these two submanifolds meet at the origin, which is not a topological manifold;  or a topological manifold that does not admit any differentiable structure (see \cite{K}).  Extending  previous results \cite{B, DLY,W1}, our results here allow us to construct multi-bump solutions of \eqref{SPP} in which  the centers of bumps  converge toward  general classes of sets $\mathcal{M}$ for a general class of nonlinearities $f$.

Our new approach enables us to deal with general classes of sets $\mathcal{M}$, which  are defined as follows:
\begin{enumerate}[(M1)]
  \item $\mathcal{M}=\{ x\in O \ | \ V(x)=m\}$ is homeomorphic to a finite $d$-dimensional polyhedron $X$   with $H_d(X;\Z_2)\neq 0$ for some $d=1,\cdots,N-1;$
  \item  $\mathcal{M}=\{ x\in O \ | \ V(x)=m\}$ is a connected compact $d$-dimensional embedded topological submanifold of $\R^N$ for some $d=1,\cdots,N-1$.
\end{enumerate}

For the reader convenience, we recall  the definitions of  a simplicial complex, simplicial homology and an embedded topological submanifold.  A simplicial complex is a set composed of points, line segments, triangles, tetrahedrons, and their $k$-dimensional equivalents satisfying some compatibilty conditions (see  Appendix for the definition of a simplicial complex). If $K$ is a simplicial complex then we put $|K|=\bigcup\{\sigma|\sigma\in K\}$. This is called the polyhedron of $K$. We can construct a  group structure on $k$-dimensional  equivalents of $K$ (denoted by $C_k(K)$). Then we define the boundary homomorphism on these groups $C_k(K)$ as follows: specify the value of the homomorphism on each generator of $C_k(K)$; then extend linearly to the other elements, known as boundary operators $\partial_k: C_k(K;\Z_2)\rightarrow C_{k-1}(K;\Z_2)$ (see Appendix). It is well known that the composition $\partial_{k+1}\circ \partial_k$ is the zero homomorphism. Then the $k$-th homology group of $|K|$ is $H_k(|K|)=\ker \partial_k/\mbox{im}\partial_{k+1}$. An embedded topological submanifold of $M$ is a subset $S\subset M$ endowed with a topology  such that  it is a topological manifold (without boundary) and  such that    the inclusion is a topological embedding (see \cite{JML}).

 The condition that $\mathcal{M}$ has nontrivial homology is necessary in the sense that a family of multi-bump solutions cannot exist when $\mathcal{M}$ is a point or a ball in $\R^N$ (see \cite{DLY, KW}). We note that $(M1)$ and $(M2)$ are independent of each other. In fact,  $(M1)$ holds for the case $\mathcal{M}=S^{N-1}\cup (S^{N-1}+(0,\cdots,0,2))$, which  is not a topological manifold, and for $n\ge 5$ there exists a closed topological $n$-manifold that does not admit a simplicial triangulation (see \cite{M}).

 Now, our main result is discussed in below.
\begin{thm}\label{main thm}
  Suppose $N\ge 3$, $\ell_0\in \N$ and that $(V1)-(V2)$ and $(f1)-(f4)$ hold. Also assume ($M1$) or ($M2$) hold. Then for sufficiently small $\epsilon=\epsilon(\ell_0)>0$, \eqref{SPP} has a positive solution $v_\epsilon$ with exactly $\ell_0$ peaks $x_\epsilon^1, \cdots, x_\epsilon^{\ell_0}\in \R^N$ satisfying, as $\e\rightarrow 0$,
  \[
  \operatorname{dist}(x_\epsilon^j, \mathcal{M})\rightarrow 0, \ \ \frac{|x_\e^i-x_\e^j|}{\e}\rightarrow \infty,
  \]
  for $i, j=1,\ldots, \ell_0$ with $i\neq j$.
  Moreover, defining $u_\epsilon(x)=v_\epsilon(\epsilon x)$, there exist a subsequence $\epsilon_j\rightarrow0$ and a family $\{W^k\}_{k=1}^{\ell_0}\subset H^1(\R^N)$ of least energy solutions of $-\Delta W+mW=f(W), W>0$ in $\R^N$ such that
  \[
  \|u_{\epsilon_j}-\sum_{k=1}^{\ell_0}W^k(\cdot-x_{\epsilon_j}^k/\epsilon_j)\|_{H^1(\R^N)}\rightarrow0 \mbox{ as } j\rightarrow \infty.
  \]\end{thm}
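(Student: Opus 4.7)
The plan is to follow a variational strategy in the spirit of Byeon–Tanaka. I would pass to the rescaled variable $u(x)=v(\epsilon x)$, so that \eqref{SPP} becomes $-\Delta u+V(\epsilon x)u=f(u)$ with natural action
\[
I_\epsilon(u)=\tfrac12\int_{\R^N}\bigl(|\nabla u|^2+V(\epsilon x)u^2\bigr)dx-\int_{\R^N}F(u)\,dx.
\]
Because (f1)–(f4) only provide the Berestycki–Lions class of nonlinearities, I would first apply a del Pino–Felmer / Byeon–Tanaka type penalization that modifies $f$ outside a fixed tubular neighborhood $\mathcal{M}_\delta$ of $\mathcal{M}$, producing a modified functional $J_\epsilon$ whose Palais–Smale sequences concentrated in $\mathcal{M}_\delta/\epsilon$ are precompact and whose nontrivial critical points with the correct concentration are honest solutions of \eqref{SPP}. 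Denote by $E_m$ the least-energy level of \eqref{b3} and by $\omega$ a positive least-energy solution.

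For the multi-bump construction, I would use the ansatz
\[
\Phi_\epsilon(\mathbf y)(x)=\sum_{k=1}^{\ell_0}\omega\!\left(x-\tfrac{y_k}{\epsilon}\right),\qquad \mathbf y=(y_1,\dots,y_{\ell_0})\in\mathcal{M}^{\ell_0}_{*},
\]
where $\mathcal{M}^{\ell_0}_{*}$ is the open subset of $\mathcal{M}^{\ell_0}$ of $\ell_0$-tuples of pairwise distance bigger than some fixed $\rho>0$. A standard expansion with (V1)–(V2) gives $J_\epsilon(\Phi_\epsilon(\mathbf y))=\ell_0 E_m+o_\epsilon(1)$ uniformly in $\mathbf y$. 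The sought solution should arise as a min-max critical point of $J_\epsilon$ at level asymptotically $\ell_0E_m$; because $J_\epsilon$ is too singular to permit a Lyapunov–Schmidt reduction when $\mathcal{M}$ lacks a differentiable structure, I would instead extract the critical point from a min-max taken over continuous extensions of $\Phi_\epsilon$, enforcing nontriviality by a homological obstruction living on $\mathcal{M}^{\ell_0}_{*}$.

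The topological heart of the proof is this obstruction, and this is where the two novel arguments enter. Under (M1), with $\mathcal{M}\simeq|X|$ for a polyhedron $X$ satisfying $H_d(X;\Z_2)\ne 0$, I would take an $\ell_0$-fold external product of a nontrivial $d$-cycle in $X$, using the simplicial structure to separate the $\ell_0$ copies by a piecewise-linear deformation and realize a class in $H_{d\ell_0}(\mathcal{M}^{\ell_0}_{*};\Z_2)$ that pairs nontrivially against the $J_\epsilon$-sublevel near $\ell_0E_m$. Under (M2), $\mathcal{M}$ is a closed topological $d$-manifold and carries a $\Z_2$-fundamental class $[\mathcal{M}]\in H_d(\mathcal{M};\Z_2)$; the external product $[\mathcal{M}]^{\times \ell_0}$ plays the analogous role, except that the pairing must be formulated through Čech/Alexander–Spanier cohomology because $\mathcal{M}$ may admit no triangulation (the non-triangulable manifolds of \cite{M}). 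Coupling this class with a continuous ``barycenter'' retraction from near-optimal $\ell_0$-bump configurations back to $\mathcal{M}^{\ell_0}_{*}$ traps the min-max value $c_\epsilon\to\ell_0E_m$ and forces existence.

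The principal obstacle I expect to meet is precisely the construction of this barycenter retraction in the non-smooth setting. In the Lyapunov–Schmidt framework of \cite{DLY} a $C^1$ structure on $\mathcal{M}$ yields local charts and a natural finite-dimensional reduction; here I must define the map purely topologically, covering $\mathcal{M}_\delta$ by Euclidean charts (under (M2)) or by open simplicial stars (under (M1)), computing local centers of mass from the concentration functions of (rescaled) multi-bump states, and gluing them via a partition of unity while simultaneously verifying that the $\ell_0$ centers remain separated at scale $\gg\epsilon$ (so as to stay in $\mathcal{M}^{\ell_0}_{*}$). Once map and obstruction are in place, a standard deformation argument produces the critical point, and a concluding Lions concentration-compactness analysis, combined with the strict gap in (V2) and the sharp energy estimates available under (f1)–(f4), splits the solution into exactly $\ell_0$ bumps, each converging in $H^1(\R^N)$ to a translate of $\omega$ with centers $x_\epsilon^j$ satisfying $\operatorname{dist}(x_\epsilon^j,\mathcal{M})\to 0$ and $|x_\epsilon^i-x_\epsilon^j|/\epsilon\to\infty$.
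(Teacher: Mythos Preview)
Your overall architecture---rescale, build a min-max class over $\ell_0$-tuples on $\mathcal{M}$, use a topological obstruction tied to $H_d(\mathcal{M};\Z_2)$, run a deformation, and conclude with concentration-compactness---matches the paper's. But there is a genuine gap: your min-max domain $\mathcal{M}^{\ell_0}_*$ carries no variable in the direction of the Pohozaev scaling, and under the Berestycki--Lions hypotheses (f1)--(f4) this is fatal for the lower estimate. Your own expansion $J_\epsilon(\Phi_\epsilon(\mathbf y))=\ell_0E_m+o_\epsilon(1)$ is \emph{uniform} in $\mathbf y$, so the class in $H_{d\ell_0}(\mathcal{M}^{\ell_0}_*;\Z_2)$ has nothing to ``pair against'': every image of the initial map already sits at level $\approx\ell_0E_m$, and nothing prevents a deformation from pushing the whole image strictly below $\ell_0E_m$. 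The paper fixes this by enlarging the parameter space to $(\tfrac{1}{\epsilon}\mathcal{M})^{\ell_0}\times[1-\delta_0,1+\delta_0]^{\ell_0}$ and using $\gamma_0(p,s)(x)=\sum_j U_0((x-p_j)/s_j)$. On the boundary $s_j\in\{1\pm\delta_0\}$ the Pohozaev function $g(s_j)<1$ forces the energy strictly below $\ell_0E_m$; this, together with Proposition~\ref{it} on the small-separation boundary, creates the linking. The intersection result (Proposition~\ref{in0}) then guarantees that any admissible deformation hits a point $u_\epsilon$ with $P(\hat\tau_{\epsilon,j}(u_\epsilon))=1$ for every $j$, whence $L_m(\hat\tau_{\epsilon,j}(u_\epsilon))\ge E_m$ by \eqref{p1} and the lower bound $\ell_0E_m$ follows. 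Without this extra $s$-direction you have no substitute for \eqref{p1}, and (f1)--(f4) give you neither a Nehari constraint nor Ambrosetti--Rabinowitz, so the lower estimate cannot be recovered.

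Two further departures are worth noting, though they are differences rather than errors. First, the paper does \emph{not} penalize: it works directly with $I_\epsilon$, confines the deformation to an explicit neighborhood $Z(\rho_4,R_4)$ of $\ell_0$-bump configurations, and replaces Palais--Smale by the annular gradient estimate of Proposition~\ref{g1} plus the $\epsilon$-fixed compactness of Proposition~\ref{g2}. Second, the topological input is packaged not as a pairing in (co)homology but as the surjectivity-type Lemmas~\ref{y2} and~\ref{y1}: one shows that the composite $(p,s)\mapsto(\tilde\Upsilon(u),\tilde P(u))$---with $\tilde\Upsilon$ the center of mass followed by the Euclidean neighborhood retraction $\omega$ onto $\mathcal{M}$, and $\tilde P$ the truncated Pohozaev functional---is homotopic to the identity on $(\tfrac{1}{\epsilon}\mathcal{M})^{\ell_0}\times S^{\ell_0}$, and those lemmas then force it to hit a prescribed well-separated point $Q_\epsilon$. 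No \v{C}ech or Alexander--Spanier machinery is needed for (M2); singular $\Z_2$-homology and the orientation bundle suffice.
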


We briefly compare our main step with the  one  in \cite{BT2, BT3, LS}.  In \cite{BT2, BT3, LS}, the authors obtained   an intersection property via the following topological degree lemma: 
\begin{align*}
&\mbox{ Let } M, N \mbox{ be compact } n\mbox{-dimensional } C^1 \ \mbox{manifolds without boundary with } N \mbox{ connected. }\\
&\mbox{Then homotopic maps } M\rightarrow N \mbox{ have the same mod } 2 \mbox{ degree. }
\end{align*}
However,   the condition that a topological manifold  has  a  $C^1$ atlas is essential  in this lemma  (see \cite[Theorem 1.6, Chapter 5]{H}).  In this paper, we  develop   topological arguments (as provided by Lemma \ref{y2} and Lemma \ref{y1}) to deal with general classes of sets $\mathcal{M}$, where the aforementioned atlas condition is not satisfied.

\begin{lem}\label{y2}
Let $K$ be a finite simplicial $r$-complex  with $H_r(|K|;\Z_2)\neq 0$ for some $r\in \N$, and define
$\mathcal{C}=\{f:|K|\rightarrow |K|, \mbox{ continuous }  |\ f \mbox{ is homotopic to the identity map }\}.$
 We  take a non-zero $r$-cycle $\sigma=\sum_{i=1}^k\sigma_i\in H_r(|K|;\Z_2)=\{\tau \in C_r(K; \Z_2) | \partial_r \tau=0\}$.  Then for all $f\in \mathcal{C}$, we have $\cup_{i=1}^k\big(\sigma_i(\Delta_r)\big)\subset f(|K|)$, where $\sigma_i$ is a $r$-simplex of $K$, $\Delta_r=\{x=\sum_{i=0}^r \lambda_i e_i\ |\ \sum_{i=0}^r\lambda_i=1,0\le \lambda_i\le 1\}$, and $e_0, e_1, \cdots, e_r$ are the standard basis of $\R^{r+1}$.
\end{lem}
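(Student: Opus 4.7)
I argue by contradiction: assume some $f\in\mathcal{C}$ satisfies $p\notin f(|K|)$ for some $p\in \bigcup_{i=1}^k\sigma_i(\Delta_r)$. Since $|K|$ is compact and $f$ continuous, $f(|K|)$ is closed. As each open simplex $\sigma_i(\Delta_r^\circ)$ is dense in $\sigma_i(\Delta_r)$, the relatively open nonempty set $\bigcup_i \sigma_i(\Delta_r)\setminus f(|K|)$ meets some $\sigma_i(\Delta_r^\circ)$; relabeling, I may take $p\in\sigma_1(\Delta_r^\circ)\setminus f(|K|)$.

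Since $p\notin f(|K|)$, $f$ factors as $|K|\xrightarrow{g} |K|\setminus\{p\}\xrightarrow{\iota}|K|$. Applying $H_r(\cdot;\Z_2)$ and using $f\simeq\mathrm{Id}_{|K|}$ gives $[\sigma]=\iota_*g_*[\sigma]\in\mathrm{Im}\,\iota_*$. By the long exact sequence of the pair $(|K|,|K|\setminus\{p\})$ one has $\mathrm{Im}\,\iota_*=\ker j_*$ with $j_*:H_r(|K|;\Z_2)\to H_r(|K|,|K|\setminus\{p\};\Z_2)$, so it suffices to show $j_*[\sigma]\ne 0$ for a contradiction.

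Since $K$ is an $r$-complex, $\sigma_1$ is top-dimensional and its open interior $\sigma_1^\circ:=\sigma_1(\Delta_r^\circ)$ is open in $|K|$. Setting $Z:=|K|\setminus\sigma_1^\circ$ (closed and contained in $|K|\setminus\{p\}$ since $p\in\sigma_1^\circ$), excision yields
\begin{equation*}
H_r(|K|,|K|\setminus\{p\};\Z_2)\,\cong\,H_r(\sigma_1^\circ,\sigma_1^\circ\setminus\{p\};\Z_2)\,\cong\,H_r(\R^r,\R^r\setminus\{0\};\Z_2)\,\cong\,\Z_2.
\end{equation*}
To compute $j_*[\sigma]$, note that distinct simplices of $K$ meet only in common faces; as $p$ lies in the open interior of $\sigma_1$, every $\sigma_i$ with $i\ne 1$ avoids $p$. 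Hence $\sum_{i\ne 1}\sigma_i$ represents zero in the relative chain complex, so $j_*[\sigma]$ is represented by the characteristic singular simplex $\sigma_1:\Delta_r\to|K|$, which descends under the excision above to a relative fundamental class of $(\R^r,\R^r\setminus\{0\})$, that is, the generator of $\Z_2$.

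The main technical point is the last identification---that $\sigma_1$ is carried to the generator of local homology. An equivalent and perhaps cleaner route is Mayer--Vietoris: cover $|K|=U\cup V$ with $U$ a small open $r$-ball around $p$ inside $\sigma_1^\circ$ and $V=|K|\setminus\{p\}$, so $U\cap V\simeq S^{r-1}$. After sufficiently many barycentric subdivisions, write the subdivided $\sigma_1$ as $c_U+c_V$ with $c_U\subset U$ and $c_V\subset V$; because no other $\sigma_i$ contributes to $c_U$, the Mayer--Vietoris connecting homomorphism sends $[\sigma]$ to the link class $[\partial c_U]$, the fundamental class of the small sphere around $p$, which is nonzero in $H_{r-1}(U\cap V;\Z_2)\cong\Z_2$.
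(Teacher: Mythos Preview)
Your proof is correct and takes a genuinely different route from the paper. The paper's argument is combinatorial: it applies the Simplicial Approximation Theorem to replace $f$ by a simplicial map $g:K^{[s]}\to K$ (after subdividing $K$ so that $\mathrm{mesh}(K)$ is small), observes that if $g$ misses a point of $\mathrm{Int}(\sigma_1(\Delta_r))$ then---because $g$ is simplicial and $K$ is an $r$-complex---no $r$-simplex of $K^{[s]}$ can be sent onto $\sigma_1$, and hence the chain map $g_\Delta$ on $C_r(K;\Z_2)$ cannot produce $\sigma_1$ in its image; this contradicts $g_*[\tilde\sigma]=\sigma$. A mesh estimate then transfers the conclusion from $g(|K|)$ back to $f(|K|)$.

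Your argument bypasses simplicial approximation entirely: you factor $f$ through $|K|\setminus\{p\}$ and use the long exact sequence of the pair together with the local homology computation $H_r(|K|,|K|\setminus\{p\};\Z_2)\cong\Z_2$ (excision, using that the open $r$-simplex is open in $|K|$ because $K$ is $r$-dimensional). This is more homological and arguably cleaner---it makes transparent exactly which feature of top-dimensional simplices is being used (nonvanishing local homology at interior points), and it avoids the auxiliary subdivision and metric estimates. The paper's approach, on the other hand, keeps everything at the chain level and never leaves the simplicial category, which fits its overall emphasis on simplicial machinery. One small remark: your excision lands in $(\sigma_1^\circ,\sigma_1^\circ\setminus\{p\})$, where the singular chain $\sigma_1$ is not literally supported; it is slightly tidier to excise $|K|\setminus\sigma_1(\Delta_r)$ instead, landing in $(\sigma_1(\Delta_r),\sigma_1(\Delta_r)\setminus\{p\})\cong(\Delta_r,\Delta_r\setminus\{q\})$, where $\sigma_1$ visibly represents the generator. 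Your Mayer--Vietoris alternative handles this point as well.
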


\begin{lem}\label{y1}
  Let $M$ be a connected compact topological $n$-manifold without boundary and $f:M\rightarrow M$ be a continuous map. If $f$ is homotopic to the identity map, then $f$ is surjective.
\end{lem}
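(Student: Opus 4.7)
The plan is to use a $\Z_2$-homology degree argument, since the statement emphasizes that $M$ need not admit any smooth structure. Because $M$ is a connected compact topological $n$-manifold without boundary, a classical theorem for topological manifolds (which relies only on local $\R^n$-charts and not on differentiability) yields $H_n(M;\Z_2)\cong \Z_2$, generated by a $\Z_2$-fundamental class $[M]$. Moreover, for any non-compact connected topological $n$-manifold $W$ without boundary one has $H_n(W;\Z_2)=0$. Both statements follow from topological Poincar\'e--Lefschetz duality with $\Z_2$ coefficients, which is valid without any smooth or PL hypothesis; the fact that every topological manifold is $\Z_2$-orientable makes the coefficient choice especially clean.

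Given these two inputs, the argument runs as follows. Suppose, for contradiction, that there exists $p\in M\setminus f(M)$. Then $f$ factors as
\[
M \xrightarrow{\ g\ } M\setminus\{p\} \xrightarrow{\ \iota\ } M,
\]
where $g$ is the corestriction of $f$ and $\iota$ is the inclusion. Since $M$ is a connected $n$-manifold without boundary, $M\setminus\{p\}$ remains connected (points have Euclidean neighborhoods and $\R^n\setminus\{0\}$, together with $S^1$ minus a point, is connected) and is manifestly non-compact. Hence $H_n(M\setminus\{p\};\Z_2)=0$, and the induced homomorphism $f_\ast=\iota_\ast\circ g_\ast$ on $H_n(\,\cdot\,;\Z_2)$ factors through zero, forcing $f_\ast[M]=0$. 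On the other hand, homotopy invariance of singular homology together with $f\simeq \Id_M$ gives $f_\ast[M]=\Id_\ast[M]=[M]\neq 0$, a contradiction. Therefore $f$ must be surjective.

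The main technical point, and the reason this lemma must be stated separately from the $C^1$ mod-$2$ degree lemma recalled from \cite{BT2, BT3, LS}, is to avoid every appeal to smooth structure: the existence of the mod-$2$ fundamental class on a connected compact topological $n$-manifold without boundary, and the vanishing of the top $\Z_2$-homology on a non-compact one, must be quoted in their purely topological formulations. This is precisely what makes the statement applicable to connected compact topological $n$-manifolds with $n\geq 5$ that admit no differentiable structure, as in $(M2)$. Once these two topological inputs are taken for granted, the argument is elementary and considerably shorter than the simplicial cycle analysis required for Lemma \ref{y2}.
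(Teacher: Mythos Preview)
Your proof is correct and follows essentially the same approach as the paper: both argue by contradiction, factor $f$ through the complement of a missed point (the paper uses a small compact convex chart neighborhood $U$ instead of a single point), and then use that the inclusion of this complement induces the zero map on $H_n(\,\cdot\,;\Z_2)$, contradicting $f_\ast=\Id_\ast$. The only cosmetic difference is that the paper derives $i_\ast=0$ via the long exact sequence of $(M,M\setminus U)$ together with Bredon's orientation-sheaf machinery, whereas you invoke directly the vanishing of $H_n$ for non-compact topological manifolds; both rest on the same underlying input.
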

  More precisely, in obtaining an intersection property in Section \ref{an intersection property}, we construct an initial path $\gamma_0$ defined on a set $(\frac{1}{\epsilon}\mathcal{M})^{\ell_0}\times S^{\ell_0}\subset (\R^N)^{\ell_0}\times \R^{\ell_0+1}$ such that 
\[
\limsup_{\epsilon\rightarrow 0}\max\{ I_\epsilon(\gamma_0(p,s))\ |\  |p_i-p_j|\le r \mbox{ for some } i\neq j \mbox{ or } s\in \partial D^{\ell_0}_+\}<\ell_0E_m,
\] where $D^{\ell_0}_+$ denotes the closed upper hemisphere of the sphere $S^{\ell_0}$, $E_m$ denotes the least energy level of \eqref{b3}, and $I_\epsilon$ is defined in subsection \ref{fs}. We use the above property of $\gamma_0$  to define a map $
\Phi_\epsilon(\tilde{\gamma}_0(q,t)): \Big(\frac{1}{\e}\mathcal{M}\Big)^{\ell_0}\times S^{\ell_0}\rightarrow \Big(\frac{1}{\e}\mathcal{M}\Big)^{\ell_0}\times S^{\ell_0}$, where $\Phi_\epsilon(\tilde{\gamma}_0(q,t))$ is homotopic to the identity map and $\tilde{\gamma}_0$ is a path satisfying 
\[
\tilde{\gamma}_0(q,t)=\gamma_0(q,t), (q,t)\in\{(p,s)\in \Big(\frac{1}{\epsilon}\mathcal{M}\Big)^{\ell_0}\times S^{\ell_0}\ |\ |p_i-p_j|\le r \mbox{ for some } i\neq j \mbox{ or } s\in \partial D^{\ell_0}_+\}.
\]
Since $H_d(\mathcal{M};\Z_2)\neq 0$ for some $d=1,\cdots, N-1$, we take $0\neq \sum_{i=1}^k\bar{c}_i=\bar{c}\in H_d(\mathcal{M};\Z_2)$. We fix  $(\bar{q}_1,\cdots,\bar{q}_{\ell_0},N)\in c^{\ell_0}\times S^{\ell_0}\subset \mathcal{M}^{\ell_0}\times S^{\ell_0}$, where $N$ denotes the north pole of $S^{\ell_0}$, $\min_{1\le i\neq j\le \ell_0}|\bar{q}_i-\bar{q}_j|>0$, and $c=\cup_{i=1}^k \big(\bar{c}_i(\Delta_d)\big)$.
Then, by Lemma \ref{y2} and Lemma \ref{y1}, we deduce an estimate 
\[
\liminf_{\epsilon\rightarrow 0} \max_{(q,t)\in (\frac{1}{\epsilon}\mathcal{M})^{\ell_0}\times S^{\ell_0}}I_\epsilon(\tilde{\gamma}_0(q,t))\ge \ell_0E_m
\]
 for any path $\tilde{\gamma}_0$ satisfying 
\[
\tilde{\gamma}_0(q,t)=\gamma_0(q,t), (q,t)\in\{(p,s)\in \Big(\frac{1}{\epsilon}\mathcal{M}\Big)^{\ell_0}\times S^{\ell_0}\ |\ |p_i-p_j|\le r \mbox{ for some } i\neq j \mbox{ or } s\in \partial D^{\ell_0}_+\}.
\]
To the best of our knowledge, Theorem \ref{main thm} is the first result concerning the existence of multi-bump solutions of \eqref{SPP} in which the centers of bumps converge toward general classes of  potential wells.

We organize the paper as follows: In Section \ref{Preliminaries}, we  give some preliminaries for the proof of our main result. In Section \ref{A gradient estimate}, using a concentration-compactness type argument, we study a gradient estimate in an annular neighborhood of approximate solutions, which has an important role in a deformation argument. In Section \ref{an intersection property}, for our deformation argument, we study an initial path $\gamma_0(p,s):(\frac{1}{\e}\mathcal{M})^{\ell_0}\times [1-\delta_0,1+\delta_0]^{\ell_0}\rightarrow H^1(\R^N)$. We also prove, using an intersection property for a deformed path $\eta(\tau,\gamma_0(p,s))$ of the initial path $\gamma_0(p,s)$, a lower estimate of the energy level $\max_{p,s}I_\e(\eta(\tau,\gamma_0(p,s)))$ of a deformed initial path. In Section \ref{Deformation argument}, we develop a deformation argument to prove Theorem \ref{main thm}. In Appendix, we   prove Lemma \ref{y2} and Lemma \ref{y1}.

\section{Preliminaries}\label{Preliminaries}
\subsection{Notation}
For $z\in \R^N$ and $R>0$, we denote $B_R\equiv\{x\in \R^N \ | \ |x|<R\},$
\[
B(z,R)\equiv\{x\in \R^N \ | \ |x-z|<R\}\ \mbox{ and }\ \overline{B}(z,R)\equiv\{x\in \R^N \ | \ |x-z|\le R\}.
\]
For any set $E\subset \R^N$, $R>0$ and $\epsilon>0$, we define $\operatorname{dist}(x,E)=\inf_{y\in E}|x-y|$,
\[
\frac{1}{\epsilon} E \equiv \{x\in \R^N \ | \ \epsilon x\in E\}, N_R(E)\equiv \{ x\in \R^N\ | \ \operatorname{dist}(x, E)\le R\}
\]
and we denote by Int$(E)$ the interior of $E$. 
\begin{defn}
If $X$ and $Y$ are spaces, then their disjoint union $X+Y$ is  the set $X\times\{0\}\cup Y\times\{1\}$ with the topology making $X\times \{0\}$ and $Y\times \{1\}$ clopen and the inclusions $x\mapsto (x,0)$ of $X\rightarrow X+Y$ and $y\mapsto (y,1)$ of $Y\rightarrow X+Y$ homeomorphisms to their images.
\end{defn}
\begin{defn}\label{me1}
Let $X$ and $Y$ be spaces and $A\subset X$ closed. Let $f:A\rightarrow Y$ be a map. Then we denote by $Y\cup_f X$, the quotient space of the disjoint union $X+Y$ by the equivalence relation $\sim$ that is genreated by the relation $a\sim f(a)$ for $a\in A$.
\end{defn}
\subsection{Limit problems}
The following constant coefficient problem appears as a limit problem of \eqref{SPP} and play important roles in our analysis,
\begin{equation}\label{limit eq}
-\Delta u+au=f(u), u\in H^1(\R^N),
\end{equation}
where $a>0$ is a constant. In \cite{BL}, Berestycki and Lions proved that there exists a least energy solution of \eqref{limit eq} with $a=m$ if (f1)-(f3) are satisfied.  Also, they showed that each solution $U$ of \eqref{limit eq} satisfies the Pohozaev's identity
\begin{equation}\label{Pohozaev}
  \frac{N-2}{2}\int_{\R^N}|\nabla U|^2dx+N\int_{\R^N}a\frac{U^2}{2}-F(U)dx=0.
\end{equation}
In \cite{BJ1}, it was shown that the set of least energy solutions of the limit problem $-\Delta u+mu=f(u)$ in $\R^N$ is uniformly bounded in $L^\infty(\R^N)$. The bound depends only on $N, p$ and the constant $\bar{C}>0$ in (f2). Thus, by a suitable cut-off argument, we may assume that $f(t)=\bar{C}(1+t^p)$ for large $t$ without changing the set of least energy solutions. Thus, without loss of generality, we may assume that for some $\bar{C}>0$, $f$ satisfies
\[
|f(t)|+|f^\prime(t)t|\le \bar{C}(1+t^p) \mbox{ for all } t\ge 0.
\]
We define a functional corresponding to \eqref{limit eq} by
\[
L_a(u)=\frac{1}{2}\int_{\R^N}|\nabla u|^2+au^2dx-\int_{\R^N}F(u)dx,\ \ u\in H^1(\R^N).
\]
It is standard to see that $L_a\in C^2(H^1(\R^N),\R).$ By \eqref{Pohozaev}, we see  that, for each solution $U$ of \eqref{limit eq},
\begin{equation}\label{LE}
  L_a(U)=\frac{1}{N}\int_{\R^N}|\nabla U|^2 dx.
\end{equation}
We denote by $E_a=\inf\{L_a(u)\ | \ u\in H^1(\R^N)\setminus \{0\} \mbox{ and } L_a^\prime(u)=0\}.$ We set
\[
S_a=\{U\in H^1(\R^N)\setminus \{0\}\ | \ L_a^\prime(U)=0, L_a(U)=E_a, U(0)=\max_{\R^N} U(x)\}.
\]
The following result was proved in \cite[Theorem 0.2]{JT}
\begin{lem}\label{LE0}
  Let $N\ge 3$ and $a>0$.
  \begin{enumerate}[(i)]
    \item
    \[
    E_a=\inf\{ L_a(u) \ | u\in H^1(\R^N)\setminus \{0\} \mbox{ and } u \mbox{ satisfies } \eqref{Pohozaev}\},
    \]
    and the infimum is attained by $u\in H_r^1(\R^N)$, which is a least energy solution of \eqref{limit eq}.
    \item
    \[
    E_a=\inf_{\gamma \in \mathcal{P}}\max_{t\in [0,1]} L_a(\gamma(t)),
    \]
    where $\mathcal{P}=\{ \gamma(t)\in C([0,1], H_r^1(\R^N))\ | \ \gamma(0)=0, L_a(\gamma(1))<0\}.$ Moreover, the optimal path is given by $\gamma(t)=\chi(\frac{x}{Tt})$, where $\chi(x)$ is a least energy solution of \eqref{limit eq} and $T>1$ is a large constant.
  \end{enumerate}
\end{lem}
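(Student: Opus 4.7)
The plan is to use the Pohozaev identity \eqref{Pohozaev} as the key constraint and to exploit the scaling $u_\theta(x):=u(x/\theta)$ to move freely between the Pohozaev constraint set, the mountain-pass geometry, and the least-energy level.

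For (i), one inequality is immediate: every nontrivial solution of \eqref{limit eq} satisfies \eqref{Pohozaev}, so $E_a$ is no smaller than the Pohozaev-constrained infimum. For the reverse inequality, I minimize $L_a$ over the set of $u\in H^1(\R^N)\setminus\{0\}$ satisfying \eqref{Pohozaev}; by \eqref{LE} this is equivalent to minimizing $\int_{\R^N}|\nabla u|^2\,dx$ on the same set, which has the useful side-effect that Schwarz symmetrization is admissible and moves us to $H_r^1(\R^N)$. There the compact embedding $H^1_r\hookrightarrow L^q$ for $q\in(2,2^*)$, together with (f1)--(f2), lets one pass to the limit in a minimizing sequence and obtain $U\in H_r^1$ realizing the infimum. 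A Lagrange multiplier argument, combined with a rescaling $U\mapsto U(\cdot/\lambda)$ that absorbs the multiplier, then produces a weak solution of \eqref{limit eq} whose energy equals the infimum, so it is a least-energy solution and its $L_a$-value equals $E_a$.

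For (ii), the upper bound MP $\le E_a$ comes from the explicit path in the statement. Taking $\chi\in S_a$ and $\gamma(t)(x)=\chi(x/(Tt))$, applying \eqref{Pohozaev} to $\chi$ gives
\[
L_a(\gamma(t))=\frac{(Tt)^{N-2}}{2}\int_{\R^N}|\nabla\chi|^2\,dx-\frac{(N-2)(Tt)^{N}}{2N}\int_{\R^N}|\nabla\chi|^2\,dx,
\]
which is maximized precisely at $Tt=1$ with value $\frac{1}{N}\int_{\R^N}|\nabla\chi|^2\,dx=E_a$ by \eqref{LE}; choosing $T>\sqrt{N/(N-2)}$ forces $L_a(\gamma(1))<0$, so $\gamma\in\mathcal{P}$. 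For the lower bound $E_a\le$ MP, I introduce the Pohozaev functional $P(u):=\frac{N-2}{2}\int_{\R^N}|\nabla u|^2\,dx+N\int_{\R^N}(\frac{a}{2}u^2-F(u))\,dx$ and argue that every $\gamma\in\mathcal{P}$ can be scaled to hit $\{P=0\}$. Near $u=0$, (f1) implies $P(u)>0$; on the other hand, the two-parameter deformation $\Gamma(t,\theta):=\gamma(t)(\cdot/\theta)$ satisfies $P(\Gamma(t,\theta))\to-\infty$ as $\theta\to\infty$ whenever $\int_{\R^N}F(\gamma(t))\,dx>\frac{a}{2}\int_{\R^N}\gamma(t)^2\,dx$, which must hold for some $t$ since $L_a(\gamma(1))<0$. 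A continuity argument then produces $(t_*,\theta_*)$ with $P(\Gamma(t_*,\theta_*))=0$, and by (i) $L_a(\Gamma(t_*,\theta_*))\ge E_a$; since $\theta\mapsto L_a(u(\cdot/\theta))$ attains its unique maximum at the Pohozaev point, this bound transfers to $\max_t L_a(\gamma(t))\ge E_a$.

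The main obstacle is the lower bound in (ii): the scaling deformation intersecting $\{P=0\}$ must not inflate the energy beyond what already appears along $\gamma$. Following the Jeanjean--Tanaka strategy, the cleanest fix is to verify that for each fixed $u$ with $\int_{\R^N}F(u)\,dx>\frac{a}{2}\int_{\R^N}u^2\,dx$, the scalar function $\theta\mapsto L_a(u(\cdot/\theta))$ has a unique non-degenerate maximum precisely at the Pohozaev point and varies continuously in $u$; this reduces the apparent two-parameter min-max to the one-parameter min-max along the original path and closes the argument.
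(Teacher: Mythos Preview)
The paper does not prove this lemma; it simply cites \cite[Theorem 0.2]{JT}. So your write-up is being compared against the Jeanjean--Tanaka argument itself.

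Your sketch of (i) and of the upper bound in (ii) follows the standard Berestycki--Lions/Jeanjean--Tanaka route and is fine as an outline.

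There is, however, a genuine gap in your lower bound for (ii). The two-parameter deformation $\Gamma(t,\theta)=\gamma(t)(\cdot/\theta)$ is not needed, and the ``transfer'' you invoke actually points the wrong way. You correctly observe that for fixed $u$ with $\int F(u)>\frac{a}{2}\int u^2$, the map $\theta\mapsto L_a(u(\cdot/\theta))$ is maximized exactly at the Pohozaev scale $\theta_0$. But this gives $L_a(u(\cdot/\theta_0))\ge L_a(u)$, i.e.\ the Pohozaev projection \emph{raises} the energy. From $L_a\bigl(\gamma(t_*)(\cdot/\theta_*)\bigr)\ge E_a$ you therefore cannot conclude $L_a(\gamma(t_*))\ge E_a$; the inequality goes the wrong direction, and no amount of continuity in $u$ repairs this.

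The correct (and much shorter) argument stays on the original path. Set
\[
P(u)=\frac{N-2}{2}\int_{\R^N}|\nabla u|^2\,dx+N\int_{\R^N}\Bigl(\frac{a}{2}u^2-F(u)\Bigr)\,dx.
\]
By (f1)--(f2), $P(u)\ge c\|u\|_{H^1}^2$ for $\|u\|_{H^1}$ small, so $P(\gamma(t))>0$ for $t$ close to $0$. At the endpoint, $L_a(\gamma(1))<0$ forces $\int(\frac{a}{2}\gamma(1)^2-F(\gamma(1)))<-\frac{1}{2}\int|\nabla\gamma(1)|^2$, and hence
\[
P(\gamma(1))<\frac{N-2}{2}\int|\nabla\gamma(1)|^2-\frac{N}{2}\int|\nabla\gamma(1)|^2=-\int|\nabla\gamma(1)|^2<0.
\]
By continuity there is $t_*\in(0,1)$ with $P(\gamma(t_*))=0$, i.e.\ $\gamma(t_*)$ itself lies on the Pohozaev set, and part (i) gives $L_a(\gamma(t_*))\ge E_a$. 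This yields $\max_{t\in[0,1]}L_a(\gamma(t))\ge E_a$ directly, with no auxiliary scaling.
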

The following result was obtained in \cite[Proposition 1]{BJ1}.
\begin{lem}\label{LE1}
  For each $a>0$ and $N\ge 3$, $S_a$ is compact in $H^1(\R^N)$. Moreover, for all $U\in S_a$ there exist $C, \tilde{C}>0$, independent of $U\in S_a$ such that
  \begin{equation}\label{exp}
  U(x)+|\nabla U(x)|\le C\exp(-\tilde{C}|x|).
  \end{equation}
\end{lem}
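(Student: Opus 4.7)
My plan has three parts: uniform $H^1$ bounds for elements of $S_a$, compactness of $S_a$ in $H^1(\R^N)$ via a profile decomposition, and the uniform exponential decay bound.

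\emph{Uniform bounds.} For any $U\in S_a$, the identity \eqref{LE} forces $\int_{\R^N}|\nabla U|^2\,dx=NE_a$. The paper already records the uniform $L^\infty$ bound $\|U\|_{L^\infty}\le M=M(N,p,\bar C)$. Pohozaev's identity \eqref{Pohozaev} gives $\int(\tfrac a2 U^2-F(U))\,dx=-\tfrac{N-2}{2}E_a$, and controlling $\int F(U)$ via (f2) together with Gagliardo--Nirenberg interpolation (using the fixed $\|\nabla U\|_2^2=NE_a$) yields a uniform bound on $\|U\|_{L^2(\R^N)}$, hence on $\|U\|_{H^1(\R^N)}$. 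At $x=0$, the fact that $-\Delta U(0)\ge 0$ combined with (f1) forces $U(0)\ge\delta_0>0$ with $\delta_0$ independent of $U$. Schauder regularity then gives uniform $C^{1,\alpha}_{\mathrm{loc}}$ bounds.

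\emph{Compactness.} Let $\{U_n\}\subset S_a$ have weak $H^1$-limit $U$. Since each $U_n$ is a critical point of $L_a$ with $L_a(U_n)=E_a$, the standard profile decomposition yields
\[
U_n = U + \sum_{k=1}^{K}V^{(k)}(\,\cdot\, - y_n^{(k)}) + r_n,\qquad \|r_n\|_{H^1}\to 0,\quad |y_n^{(k)}|\to\infty,
\]
with each $V^{(k)}$ a nontrivial critical point of $L_a$ and $L_a(U)+\sum_{k}L_a(V^{(k)})=E_a$. Because any nontrivial critical value is at least $E_a$, the only possibilities are $U\neq 0$ with $K=0$, or $U=0$ with $K=1$. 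The latter is excluded: by $C^{1,\alpha}_{\mathrm{loc}}$ bounds, $\delta_0\le U_n(0)=V^{(1)}(-y_n^{(1)})+r_n(0)+o(1)\to 0$, a contradiction. Hence $K=0$, $U_n\to U$ strongly in $H^1(\R^N)$, $L_a(U)=E_a$, $L_a'(U)=0$, and passing to the limit in $U_n(0)=\max U_n$ yields $U(0)=\max U\ge\delta_0$, so $U\in S_a$.

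\emph{Uniform exponential decay.} Fix $\epsilon\in(0,a)$ and, by (f1), pick $\rho>0$ such that $f(t)\le\epsilon t$ for $t\in[0,\rho]$. Writing the equation as $-\Delta U+c(x)U=0$ with $c(x)=a-f(U(x))/U(x)$ uniformly bounded in $L^\infty$ (by (f2) and the $L^\infty$ bound on $U$), Moser's subsolution estimate gives $\sup_{B(x_0,1/2)}U\le C\|U\|_{L^2(B(x_0,1))}$ with $C$ independent of $U\in S_a$ and $x_0$. Since $H^1$-compactness provides tightness of $\{U^2\}_{U\in S_a}$, we obtain $\sup_{|x|\ge R}U\to 0$ uniformly in $U\in S_a$ as $R\to\infty$; hence there is $R_0>0$, uniform in $U$, with $U(x)\le\rho$ for $|x|\ge R_0$. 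On $\R^N\setminus B(0,R_0)$ we then have $-\Delta U+(a-\epsilon)U\le 0$, and comparison with the radial barrier $Me^{-\tilde C(|x|-R_0)}$ with $\tilde C<\sqrt{a-\epsilon}$, justified by the maximum principle on the exterior domain (using $U\to 0$ at infinity), yields $U(x)\le Ce^{-\tilde C|x|}$ uniformly. Interior Schauder estimates applied on unit balls (where the right-hand side $f(U)-aU$ is exponentially small) upgrade this to the claimed bound on $U+|\nabla U|$. The main technical point is the \emph{uniformity} of $R_0$ in $U\in S_a$: individual pointwise decay is classical, but the rate could a priori degenerate along a sequence, and I circumvent this by deferring the decay argument until compactness (hence tightness) is in hand.
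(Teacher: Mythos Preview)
The paper does not supply its own proof of this lemma; it simply cites \cite[Proposition 1]{BJ1}. Your self-contained argument is correct: the uniform $H^1$ bound via \eqref{LE}, Pohozaev and Gagliardo--Nirenberg (the exponent $(1-\theta)(p+1)<2$ precisely because $N\ge 3$), the compactness via profile decomposition together with the energy splitting and the lower bound $U_n(0)\ge\delta_0$ to rule out escaping bubbles, and the uniform exponential decay obtained by first extracting tightness from compactness and then running a comparison argument, all go through. One small terminological point: under (f1)--(f3) alone $f$ is merely continuous, so what you are really invoking for $C^{1,\alpha}$ and for the gradient decay are interior $W^{2,q}$ (Calder\'on--Zygmund) estimates rather than Schauder; the conclusions are unchanged.

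A somewhat more direct route---and likely closer to what is done in \cite{BJ1}---is to observe that every $U\in S_a$ is positive and hence, by Gidas--Ni--Nirenberg, radially symmetric and decreasing about $0$. Then the uniform $H^1$ bound combined with Strauss' radial lemma already gives $\sup_{|x|\ge R}U(x)\to 0$ uniformly in $U\in S_a$, so the uniform exponential decay can be established \emph{before} compactness, and compactness then follows by an Arzel\`a--Ascoli argument on compacts plus the uniform tail control. Your approach inverts this order (compactness first, then tightness, then decay), which is equally valid but requires the profile-decomposition machinery; the radial route avoids it.
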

\subsection{Euclidean neighborhood retracts}
 By Corollary \ref{iu}, we may assume that $\mathcal{M}$ is a finite $d$-dimensional  polyhedron   with $H_d(\mathcal{M};\Z_2)\neq 0$ for some $d=1,\cdots,N-1$ if $(M1)$ holds. Then if $(M1)$ holds, we take a cycle $0\neq \bar{c}=\sum_{i=1}^k \bar{c}_i\in \ker \partial_d=H_d(\mathcal{M};\Z_2)$, where $\bar{c}_i$ is a $d$-simplex in $\mathcal{M}$. We denote
 \begin{equation}\label{h4}
\cup_{i=1}^k \big(\bar{c}_i(\Delta_d)\big)=c,
 \end{equation}
where $\partial_d$ is defined in \eqref{ap} and $\cup_{i=1}^k \big(\bar{c}_i(\Delta_d)\big)$ is the support of $\sum_{i=1}^k \bar{c}_i$.  Let $10\bar{\beta}= \operatorname{dist}(\mathcal{M},\R^N\setminus O)$, where $O$ is given in (V2).  Since $\mathcal{M}$ is locally contractible, we may choose $\beta_1\in(0,\bar{\beta})$ such that   there exists
\begin{align*}
&q=(q_1,\ldots, q_{\ell_0})\in \begin{cases}
                              (c)^{\ell_0}  & \mbox{if } (M1)\mbox{ holds}, \\                   
                         (\mathcal{M})^{\ell_0} & \mbox{if } (M2) \mbox{ holds},
                          \end{cases}
\end{align*}
satisfying
\begin{equation}\label{pp2}
\min_{1\le i\neq j\le \ell_0}|q_i-q_j|=4 \beta_1.
\end{equation} 

By \cite[Appendices Theorem E.3]{Bredon}, $\mathcal{M}$ is a retract of an open neighborhood $G$ of $\mathcal{M}$ in $\R^N$. Then there exists a map
\begin{equation}\label{h3}
\omega: G\rightarrow \mathcal{M}
\end{equation}
 such that $\omega(x)=x$ for all points $x\in \mathcal{M}$. We note that  there exists  $\beta\in(0, \beta_1 )$ satisfying $N_{2\beta}(\mathcal{M})\subset G$ and
\begin{equation}\label{pil}
|\omega(x)-x|< \beta_1 \mbox{ for all } x\in N_{\frac32\beta}(\mathcal{M}).
\end{equation}

For large $R>1$ and $\rho\in (0,1]$, we try to find our critical points in the following set:
\begin{align*}
Z(\rho, R)=\Big\{\sum_{j=1}^{\ell_0} &U_j(\cdot-y_j)+w\ | \ y_j\in \frac{1}{\e}N_\beta(\mathcal{M}), U_j\in S_m, j=1,\cdots, \ell_0, \\
&|y_j-y_{j^\prime}|\ge R \mbox{ for } 1\le j\neq j^\prime\le \ell_0 \mbox{ and } \| w\|_{H^1(\R^N)}<\rho\Big\}.
\end{align*}
\subsection{Functional setting}\label{fs}
The variational framework is the following. We define a norm $\|\cdot\|_{H^1}$ on $H^1(\R^N)$ by
\[
\|u\|^2_{H^1}=\int_{\R^N}|\nabla u|^2+mu^2dx.
\]
For $u\in H^1(\R^N)$ and any set $E \subset \R^N$, we denote
\[
\|u\|_{H^1(E)}=\Big(\int_{E}|\nabla u|^2+m u^2\Big)^{1/2}.
\]
We also denote by $\|\cdot\|_{H^{-1}}$ the corresponding dual norm on $H^1(\R^N)$, that is,
\[
\|h\|_{H^{-1}}=\sup_{\|\varphi\|_{H^1}\le 1}|\langle h,\varphi\rangle|\ \ \mbox{ for }\ h\in (H^1(\R^N))^\ast=H^{-1}(\R^N),
\]
where $\langle h,\varphi \rangle$ is the duality product between $H^{-1}(\R^N)$ and $H^1(\R^N)$.
For $u\in H^1(\R^N)$, let
\begin{equation}\label{ene1}
I_\epsilon(u)=\frac{1}{2}\int_{\R^N}|\nabla u|^2+V_\epsilon(x) u^2dx-\int_{\R^N}F(u)dx,
\end{equation}
where $V_\epsilon(x)=V(\epsilon x)$.
It is standard to see that $I_\epsilon\in C^2(H^1(\R^N), \R)$ and the critical points of $I_\epsilon$ are solutions of \begin{equation}\label{main eq}
 \Delta u-V_\e(x)u+f(u)=0 \mbox{ in } \R^N.
\end{equation}
 Since we seek positive solutions, without loss of generality, we assume that $f(t)=0$ for all $t\le 0$. Then we can see from the maximum principle that any nontrivial solution of \eqref{main eq} is positive. We remark that $v(x)=u(x/\e)$ satisfies \eqref{SPP} and we try to find critical points of $I_\e(u)$.

\subsection{Estimate of $L_m(u)$ with relation to the Pohozaev identity}
Let $m>0$ be the positive number given in (V2). We define
\begin{equation}\label{p0}
P(u)=\sqrt{\Big(\frac{N\int_{\R^N}F(u)-\frac{m}{2}u^2dx}{\frac{N-2}{2}\int_{\R^N}|\nabla u|^2dx}\Big)_+}.
\end{equation}
We remark that the Pohozaev identity \eqref{Pohozaev} is $P(u)=1$ and
\[
P(u(x/s))=sP(u) \mbox{ for } u\in H^1(\R^N)\setminus \{ 0\} \mbox{ and } s>0.
\]
Then for $U\in S_m$, $s>0$ and $y\in \R^N$, we have
\[
P(U(\frac{\cdot-y}{s}))=s \mbox{ and } 
L_m(U(\frac{\cdot-y}{s}))=g(s)E_m,
\]
where $g(s)=\frac{1}{2}(Ns^{N-2}-(N-2)s^N).$
\begin{rmk}\label{rmk1}
    $g(s)$ has the following properties:
  \begin{enumerate}[(i)]
    \item $g(s)>0$ if and only if $s\in(0,\sqrt{\frac{N}{N-2}})$;
    \item $g(s)\le 1$ for all $s>0$;
    \item $g^\prime(s)>0$ for all $0<s<1$ and $g^\prime(s)<0$ for all $s>1$.
  \end{enumerate}
\end{rmk}
By Lemma \ref{LE0}, we also have
\begin{equation}\label{p1}
E_m=\inf\{ L_m(u)\ | \ u\in H^1(\R^N)\setminus \{0\} \mbox{ and } P(u)=1\}.
\end{equation}



\subsection{Local centers of mass}
We will introduce a function $\Upsilon_j(u) : Z(\rho, R)\rightarrow \R^N$ that describes local centers of mass and satisfies
\[
|\Upsilon_j(\sum_{j=1}^{\ell_0}U_j(x-y_j)+w)-y_j|\le 2R_0
\]
for some $R_0>0$ independent of $u=\sum_{j=1}^{\ell_0}U_j(x-y_j)+w\in Z(\rho, R).$

We define
\begin{equation}\label{L1}
\rho_0=\inf_{U\in S_m}\|U\|_{H^1}>0.
\end{equation}
We choose $R_0>1$ such that for all $U\in S_m$
\begin{equation}\label{L}
\|U\|_{H^1(B(0,R_0))}>\frac{3}{4}\rho_0, \ \|U\|_{H^1(\R^N\setminus\overline{B}(0,R_0))}<\frac{1}{8\ell_0}\rho_0.
\end{equation}
For $u\in H^1(\R^N)$ and $P\in \R^N$, we define
\[
d(u,P)=\psi\Big(\inf_{U\in S_m}\|u-U(x-P)\|_{H^1(B(P,R_0))}\Big),
\]
where $\psi(r)\in C_0^\infty([0,\infty),\R)$ is a function such that
\begin{align*}
&\psi(r)=\begin{cases}
         1, & r\in [0, \frac{1}{3}\rho_0], \\
         0, & r\in [\frac{1}{2}\rho_0,\infty),
       \end{cases}\\
&\psi(r)\in [0,1]\ \mbox{ for all }\ r\in [0, \infty).
\end{align*}
We set
\[
Z_0=Z(\frac{1}{8}\rho_0, 12R_0).
\]
To define local centers of mass $\Upsilon(u)=(\Upsilon_1(u),\cdots, \Upsilon_{\ell_0}(u))$, we need the following lemma. The following result was proved in \cite[Lemma 3.1]{BT2}.
\begin{lem}\label{lem1}
  There exists $r_0>0$ such that for $u(x)=\sum_{j=1}^{\ell_0}U_j(x-y_j)+w(x)\in Z_0$,
  \[
  \bigcup\limits_{j=1}^{\ell_0}\overline{B}(y_j,r_0)\subset \textrm{supp } d(u,\cdot)\subset \bigcup\limits_{j=1}^{\ell_0}\overline{B}(y_j,2R_0)
  \]
  and
  \[
  d(u,P)=1 \ \mbox{ for }\ P\in \bigcup\limits_{j=1}^{\ell_0}\overline{B}(y_j,r_0).
  \]
\end{lem}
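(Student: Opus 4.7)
The proof naturally splits along the two thresholds of $\psi$: I need to show $d(u,\cdot)$ vanishes outside $\bigcup_j \overline{B}(y_j, 2R_0)$ and equals one on $\bigcup_j \overline{B}(y_j, r_0)$ for some $r_0 > 0$ to be chosen. Fix $u = \sum_{j=1}^{\ell_0} U_j(\cdot - y_j) + w \in Z_0$, so $|y_j - y_{j'}| \ge 12 R_0$ for $j \ne j'$, each $U_j \in S_m$, and $\|w\|_{H^1} < \tfrac{1}{8}\rho_0$.

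For the outer containment, suppose $|P - y_j| > 2R_0$ for every $j$; I will show $d(u, P) = 0$. For each $j$, the ball $B(P, R_0)$ is disjoint from $\overline{B}(y_j, R_0)$, so after the translation $x \mapsto x - y_j$ and applying the second inequality in \eqref{L}, I obtain $\|U_j(\cdot - y_j)\|_{H^1(B(P,R_0))} < \tfrac{1}{8\ell_0}\rho_0$. Summing over $j$ and adding $\|w\|_{H^1(B(P,R_0))} < \tfrac{1}{8}\rho_0$ gives $\|u\|_{H^1(B(P,R_0))} < \tfrac{1}{4}\rho_0$. For any $U \in S_m$, the first inequality in \eqref{L} gives $\|U(\cdot - P)\|_{H^1(B(P,R_0))} > \tfrac{3}{4}\rho_0$, so by the reverse triangle inequality the infimum over $S_m$ exceeds $\tfrac{1}{2}\rho_0$, hence $d(u, P) = 0$ by the definition of $\psi$.

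For the inner containment, I seek $r_0 \in (0, R_0]$, independent of $u$, such that whenever $|P - y_j| \le r_0$ for some $j$ the test function $U_j$ realises $\|u - U_j(\cdot - P)\|_{H^1(B(P,R_0))} \le \tfrac{1}{3}\rho_0$. Decompose the difference as
\[
\bigl(U_j(\cdot - y_j) - U_j(\cdot - P)\bigr) + \sum_{j' \ne j} U_{j'}(\cdot - y_{j'}) + w.
\]
Since $|P - y_{j'}| \ge 12R_0 - r_0 \ge 2R_0$ for $j' \ne j$, the argument of the outer case bounds the middle sum in $H^1(B(P,R_0))$ by $(\ell_0 - 1)\tfrac{1}{8\ell_0}\rho_0 < \tfrac{1}{8}\rho_0$, and the last term by $\tfrac{1}{8}\rho_0$. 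After the change of variable $z = x - P$, the first term equals $\|U_j(\cdot - h) - U_j(\cdot)\|_{H^1(B(0,R_0))}$ with $|h| = |y_j - P| \le r_0$. It therefore suffices to pick $r_0$ so that
\[
\sup_{U \in S_m}\ \sup_{|h| \le r_0} \|U(\cdot) - U(\cdot - h)\|_{H^1(\R^N)} \le \tfrac{1}{12}\rho_0,
\]
after which the three summands contribute at most $\tfrac{1}{12}\rho_0 + \tfrac{1}{8}\rho_0 + \tfrac{1}{8}\rho_0 = \tfrac{1}{3}\rho_0$, yielding $d(u,P) = 1$.

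The only genuine obstacle is the uniform translation continuity above: a single $r_0$ must serve every $U \in S_m$. I would derive it from the compactness of $S_m$ in $H^1(\R^N)$ provided by Lemma \ref{LE1}: pointwise strong continuity of translation on $H^1(\R^N)$ combined with a finite $H^1$-net of radius $\tfrac{1}{36}\rho_0$ inside $S_m$ upgrades to uniform continuity in the usual three-$\varepsilon$ fashion; alternatively, the uniform exponential decay \eqref{exp} truncates each $U$ to a common large ball and reduces to a uniformly continuous finite family. Once this uniformity is in hand, the rest of the argument is just the triangle-inequality bookkeeping displayed above.
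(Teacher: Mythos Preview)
Your argument is correct. The paper does not actually prove this lemma; it merely cites \cite[Lemma~3.1]{BT2}, so there is no in-paper proof to compare against. Your approach---using the estimates \eqref{L} and the reverse triangle inequality for the outer inclusion, and the compactness of $S_m$ from Lemma~\ref{LE1} to obtain uniform continuity of translation for the inner inclusion---is the natural one and is in the same spirit as the original argument in \cite{BT2}. One cosmetic remark: in the outer step you only need the infimum to be $\ge \tfrac{1}{2}\rho_0$ (not strictly greater), since $\psi$ already vanishes at $\tfrac{1}{2}\rho_0$; your strict inequality is true anyway because the infimum is attained on the compact set $S_m$, but you need not insist on it.
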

By Lemma \ref{lem1}, for any $u\in Z_0$ there exist $\ell_0$ balls $B_j (j=1,2,\cdots, \ell_0)$ satisfying
\begin{equation}\label{p10}
\begin{cases}
\mbox{diam}B_j=5R_0 & \mbox{ for all } j\in \{1,2,\ldots, \ell_0\},\\
\operatorname{dist}(B_i,B_j)\ge 7R_0 & \mbox{ for all } 1\le i\neq j\le \ell_0,\\
d(u,\cdot)\not\equiv 0 &\mbox{ on } B_j \mbox{ for all } j\in\{1,2,\cdots,\ell_0\},\\
d(u,\cdot)=0 & \mbox{ on } \R^N \setminus \cup_{j=1}^{\ell_0}B_j.
\end{cases}
\end{equation}
For example, set $B_j=B(y_j,\frac52 R_0)$ for $u=\sum_{j=1}^{\ell_0} U_j(\cdot-y_j)+w\in Z_0$.
For $B_j$ satisfying \eqref{p10}, we define
\begin{equation}\label{ji2}
\Upsilon_j(u)=\frac{\int_{B_j}d(u,P)PdP}{\int_{B_j}d(u,P)dP}\in B_j.
\end{equation}
We note that $(\Upsilon_1(u),\ldots, \Upsilon_{\ell_0}(u))$ is uniquely determined up to a permutation and it is independent of the choice of $B_j$'s satisfying \eqref{p10}. 

We also have the following lemma. The following result was proved in \cite[Lemma 3.2]{BT2}.
\begin{lem}\label{cm}
  For $u=\sum_{j=1}^{\ell_0}U_j(x-y_j)+w(x)\in Z_0$, we have
  \begin{enumerate}[(i)]
  \item $|\Upsilon_j(u)-y_j|\le 2R_0\ (j=1, \cdots, \ell_0)$, up to a permutation.
  \item $|\Upsilon_i(u)-\Upsilon_j(u)|\ge 8R_0 \ (1\le i\neq j\le \ell_0).$
  \end{enumerate}
  Moreover, $\Upsilon_j(u)$ is uniformly locally Lipschitz continuous, that is, there exist constants $C_1, C_2>0$ such that
        \[
        |\Upsilon_j(u)-\Upsilon_j(v)|\le C_1 \|u-v\|_{H^1} \ \mbox{ for all }\ u, v\in Z_0 \ \mbox{ with }\ \|u-v\|_{H^1} \le C_2.
        \]
\end{lem}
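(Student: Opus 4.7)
The plan is to use Lemma \ref{lem1} together with the separation $|y_i-y_j|\geq 12R_0$ (which holds since $u\in Z_0=Z(\tfrac{1}{8}\rho_0,12R_0)$) to pigeonhole each ball $B_j$ into a unique $\overline{B}(y_{k(j)},2R_0)$, and then to exploit the convex-combination form \eqref{ji2} to locate $\Upsilon_j(u)$.

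First I would observe that by Lemma \ref{lem1} the support of $d(u,\cdot)$ is contained in $\bigcup_{k=1}^{\ell_0}\overline{B}(y_k,2R_0)$, and these balls are pairwise disjoint with mutual distance at least $12R_0-4R_0=8R_0$. Since each $B_j$ has diameter $5R_0<8R_0$, the intersection $B_j\cap\mathrm{supp}\,d(u,\cdot)$ is contained in a single ball $\overline{B}(y_{k(j)},2R_0)$. The requirement $d(u,\cdot)\not\equiv 0$ on $B_j$ from \eqref{p10} ensures $k(j)$ is well defined. Furthermore, the conditions in \eqref{p10} force $\mathrm{supp}\,d(u,\cdot)\subset\bigcup_{j=1}^{\ell_0}B_j$, and since each $\overline{B}(y_k,r_0)\subset\mathrm{supp}\,d(u,\cdot)$ is connected and of diameter $2r_0$, it sits inside exactly one $B_j$; a counting argument then shows $j\mapsto k(j)$ is a permutation of $\{1,\dots,\ell_0\}$.

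For (i), since $\overline{B}(y_{k(j)},2R_0)$ is convex and the integrand $d(u,P)P$ in \eqref{ji2} is supported in this ball (within $B_j$), the barycenter $\Upsilon_j(u)$ lies in $\overline{B}(y_{k(j)},2R_0)$, giving $|\Upsilon_j(u)-y_{k(j)}|\leq 2R_0$; after the permutation $k$ this is the desired bound. Part (ii) follows immediately from the triangle inequality:
\[
|\Upsilon_i(u)-\Upsilon_j(u)|\geq |y_{k(i)}-y_{k(j)}|-4R_0\geq 12R_0-4R_0=8R_0.
\]

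The main technical step will be the local Lipschitz continuity. My approach is first to check that the point-evaluation $u\mapsto d(u,P)$ is Lipschitz in $u\in H^1(\R^N)$, uniformly in $P\in\R^N$: this follows from the triangle inequality $|\inf_{U\in S_m}\|u-U(\cdot-P)\|_{H^1(B(P,R_0))}-\inf_{U\in S_m}\|v-U(\cdot-P)\|_{H^1(B(P,R_0))}|\leq\|u-v\|_{H^1}$ combined with $\psi\in C^1$ with bounded derivative. Next, choosing $C_2$ small enough (depending on $\rho_0$ and $R_0$) that for $\|u-v\|_{H^1}\leq C_2$ both $u$ and $v$ lie in $Z_0$ with the same local center labelling and the same choice of balls $B_j$ remains admissible for both (this uses that $d(u,\cdot)=1$ on $\overline{B}(y_j,r_0)$, so the support of $d(u,\cdot)\big|_{B_j}$ varies only a little with $u$), one reduces the question to the Lipschitz dependence of the quotient in \eqref{ji2} on the Lebesgue integrals $\int_{B_j}d(u,P)P\,dP$ and $\int_{B_j}d(u,P)\,dP$. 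The denominator is bounded below by $|\overline{B}(y_{k(j)},r_0)|>0$ (by Lemma \ref{lem1}), and the numerator is bounded above by $|B_j|\cdot(|y_{k(j)}|+2R_0)$, so differentiating the quotient and applying the pointwise Lipschitz estimate for $d(\cdot,P)$ yields a Lipschitz constant $C_1$ depending only on $\rho_0$, $R_0$ and $\ell_0$, as required.
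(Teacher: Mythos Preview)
The paper does not give its own proof of this lemma; it simply cites \cite[Lemma~3.2]{BT2}. So there is no ``paper's approach'' to compare against, and your proposal is effectively a reconstruction of that cited argument. Your treatment of (i) and (ii) is correct and clean: the pigeonholing of each $B_j$ into a unique $\overline{B}(y_{k(j)},2R_0)$ via the diameter/separation count, and the convex-average argument for locating $\Upsilon_j(u)$, are exactly right.

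The Lipschitz sketch has the right overall architecture but contains two issues you should address. First, a minor inconsistency: you bound the numerator by $|B_j|\cdot(|y_{k(j)}|+2R_0)$, and this quantity is \emph{not} uniform in $u\in Z_0$ (the centers $y_{k(j)}$ live in $\tfrac{1}{\e}N_\beta(\mathcal{M})$ and can be arbitrarily large), so as written the quotient estimate does not produce a constant $C_1$ depending only on $\rho_0,R_0,\ell_0$. The fix is standard: subtract the center $c_j$ of $B_j$ before integrating, so that $|P-c_j|\le\tfrac{5}{2}R_0$ on $B_j$; the constant $c_j$ cancels in the difference $\Upsilon_j(u)-\Upsilon_j(v)$.

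Second, and more substantively, the sentence ``the same choice of balls $B_j$ remains admissible for both'' is doing real work that you have not justified. Admissibility in the sense of \eqref{p10} requires in particular that $d(v,\cdot)\equiv 0$ on $\R^N\setminus\bigcup_j B_j$. From the Lipschitz estimate $|d(u,P)-d(v,P)|\le\|\psi'\|_\infty\|u-v\|_{H^1}$ you only get that $d(v,P)$ is \emph{small} outside $\bigcup_j B_j$, not zero. To close this gap you need a quantitative margin: for $P\notin\bigcup_j B(y_j,\tfrac{5}{2}R_0)$ one has $B(P,R_0)\cap B(y_j,\tfrac{3}{2}R_0)=\emptyset$ for all $j$, so by compactness of $S_m$ (Lemma~\ref{LE1}) and the choice \eqref{L} there is a fixed $\delta>0$, depending only on $\rho_0,R_0,\ell_0$, with $\inf_{U\in S_m}\|u-U(\cdot-P)\|_{H^1(B(P,R_0))}\ge\tfrac{1}{2}\rho_0+\delta$; then $\|u-v\|_{H^1}\le C_2:=\delta$ forces $d(v,P)=0$ for such $P$. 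Without making this margin explicit, the argument does not go through.
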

We define $\Theta: Z_0\rightarrow \R$ by
\[
\Theta(u)=\min_{1\le i\neq j\le\ell_0}|\Upsilon_i(u)-\Upsilon_j(u)|.
\]

\subsection{A tail minimizing operator}
For a set $D\subset\R^N$, we write
\[
I_{\e, D}(u)=\frac{1}{2}\int_{D}|\nabla u|^2+V(\e x)u^2-\int_{D}F(u)dx.
\]
 For $L\ge 2$ and $z=(z_1,\cdots, z_{\ell_0})\in (\R^N)^{\ell_0}$ with $|z_i-z_j|\ge 2(L+1)$ ($1\le i\neq j\le \ell_0$), we denote
\[
D(L,z)\equiv\R^N\setminus \bigcup\limits_{j=1}^{\ell_0}\overline{B}(z_j,L).
\]
For $u\in H^1(\R^N)$, we consider the following exterior problem
\begin{equation}\label{T1}
  \begin{cases}
    -\Delta v+V(\e x)v=f(v) & \mbox{in } D(L,z), \\
    v=u & \mbox{on } \partial D(L,z).
  \end{cases}
\end{equation}
We have the following unique existence of a solution of \eqref{T1}. The proof of the following lemma is standard (refer to  \cite[Lemma 2.6]{BT2} and \cite[Proposition 5.7]{CR}).
\begin{lem}\label{Tail}
  There exist $\rho_1, \rho_2\in (0,\frac{1}{8}\rho_0]$ such that for small $\e>0$ and for $L\ge 2, z=(z_1,\cdots, z_{\ell_0})$ with $|z_i-z_j|\ge 2(L+1) (i\neq j)$ and $u\in H^1(\R^N)$ satisfying
  \begin{equation}\label{T3}
  \|u\|_{H^1(\cup_{j=1}^{\ell_0}(B(z_j,L)\setminus B(z_j,L-1)))}<\rho_1,
  \end{equation}
 the problem \eqref{T1} has a unique solution $v(x)=v_\e(L,z;u)$ in
  \[
  \{v\in H^1(D(L,z))\ | \ \|v\|_{H^1(D(L,z))}\le \rho_2\}.
  \]
  Moreover we have
  \begin{enumerate}[(i)]
  \item $v_\e(L,z;u)(x)$ is a minimizer of the following minimization problem:
  \begin{equation}\label{T4}
  \inf\{I_{\e, D(L,z)}(v)\ | \ \|v\|_{H^1(D(L,z))}\le \rho_2, v=u \mbox{ on } \partial D(L,z)\}.
  \end{equation}
  \item There exist constants $A_1, A_2, A_3>0$ independent of $\e, L\ge 2, z=(z_1,\cdots, z_{\ell_0})$ and $u$ such that
  \[
  \|v_\e(L,z;u)\|_{H^1(D(L,z))}\le A_1 \|u\|_{H^1(\cup_{j=1}^{\ell_0}(B(z_j,L)\setminus \overline{B}(z_j,L-1)))} ,
  \]
  \begin{equation}\label{T5}
  \begin{aligned}
  |v_\e(L,z;u)(x)|, |\nabla v_\e(L,z;u)(x)|\le A_2 \exp(-A_3\operatorname{dist}(x,\{z_1,\cdots z_{\ell_0}\})) \mbox{ for } x\in D(L+1,z).
  \end{aligned}
  \end{equation}
  \end{enumerate}
\end{lem}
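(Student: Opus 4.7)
The plan is to realize \eqref{T1} as the Euler--Lagrange equation of the functional $I_{\e,D(L,z)}$ on the affine space $\tilde u+H_0^1(D(L,z))$, find a minimizer on a small closed $H^1$-ball by the direct method (this gives (i) together with existence), establish uniqueness of small critical points via a contraction-type $H^1$ estimate based on $f'(0)=0$, and finally prove the pointwise decay in (ii) by Moser iteration combined with a comparison argument. The whole procedure is an exterior-domain adaptation of \cite[Lemma 2.6]{BT2}, where the smallness assumption \eqref{T3} plays the role that allows ordinary perturbation theory to work.

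First I would extend the boundary data: reflecting $u|_{\cup_j(B(z_j,L)\setminus B(z_j,L-1))}$ through each sphere $\partial B(z_j,L)$ and multiplying by a smooth cutoff supported in $\cup_j(B(z_j,L+1)\setminus B(z_j,L))$ which equals $1$ on $\partial D(L,z)$ produces $\tilde u\in H^1(D(L,z))$ with $\tilde u=u$ on $\partial D(L,z)$ and $\|\tilde u\|_{H^1(D(L,z))}\le C_0\rho_1$. From (f1)--(f2), for every $\delta>0$ there is $C_\delta$ with $|F(t)|\le \delta t^2+C_\delta|t|^{p+1}$. Combining this with Sobolev embedding and $V(\e x)\ge V_0>0$ yields
\[
I_{\e,D(L,z)}(v)\ge \tfrac14\|v\|^2_{H^1(D(L,z))}\qquad\text{whenever } \|v\|_{H^1(D(L,z))}\le \rho_2
\]
for $\rho_2$ small enough. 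On the bounded, convex, weakly closed constraint set of \eqref{T4}, $I_{\e,D(L,z)}$ is weakly lower semicontinuous: the quadratic part is standard, and for the nonlinear part one extracts a.e.\ convergence along a subsequence (Rellich on every bounded subdomain plus a diagonal argument) and applies Fatou to the above pointwise bound on $F$. The direct method then supplies a minimizer $v_\e(L,z;u)$ of \eqref{T4}, and comparing $I_{\e,D(L,z)}(v_\e(L,z;u))\le I_{\e,D(L,z)}(\tilde u)\le C\rho_1^2$ with the coercivity bound gives the norm estimate in (ii) with $A_1=2\sqrt{C}$. Fixing $\rho_1$ so that $A_1\rho_1<\rho_2$ places the minimizer strictly inside the constraint ball, so it is a critical point of $I_{\e,D(L,z)}$, hence a weak solution of \eqref{T1}. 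Uniqueness among $H^1$-small solutions is handled by testing $w=v_1-v_2\in H_0^1(D(L,z))$ against the difference of the two equations: (f1) and (f4) give $f'(0)=0$, so for every $\delta>0$ there is $\eta_\delta>0$ with $|f'(t)|\le \delta$ for $|t|<\eta_\delta$, while (f2) controls $|f'|$ elsewhere. Splitting the resulting integral and applying H\"older together with Sobolev yields
\[
\|w\|^2_{H^1(D(L,z))}\le \bigl(C\delta+C'\rho_2^{p-1}\bigr)\|w\|^2_{H^1(D(L,z))},
\]
so $w\equiv 0$ for $\delta$ and $\rho_2$ small.

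The main obstacle is the pointwise exponential decay \eqref{T5}. Here I would first run Moser iteration on $-\Delta v+V(\e x)v=f(v)$ to bound $\|v_\e(L,z;u)\|_{L^\infty(D(L+\tfrac12,z))}$ by its $H^1$-norm on a slightly larger subdomain, which is $O(\rho_1)$. Since (f1) gives $|f(t)|\le \tfrac12 V_0|t|$ for $|t|$ small, we then have $-\Delta v_\e(L,z;u)+\tfrac12 V_0 v_\e(L,z;u)\le 0$ pointwise on $D(L+\tfrac12,z)$. Comparison against the barrier $A_2\sum_{j=1}^{\ell_0}\exp(-A_3|x-z_j|)$ with $0<A_3<\sqrt{V_0/2}$ (so the barrier is a super-solution for $-\Delta+\tfrac12 V_0$ away from the points $z_j$) dominates $v_\e(L,z;u)$ on the inner boundary $\partial D(L+\tfrac12,z)$ once $A_2$ is chosen large enough in terms of the Moser $L^\infty$-bound, and both sides vanish at infinity, so the maximum principle on $D(L+\tfrac12,z)$ yields \eqref{T5} on $D(L+1,z)$. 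Standard $C^{1,\alpha}$ interior elliptic estimates then transfer the decay to $|\nabla v_\e(L,z;u)|$. The chief technical subtlety is verifying that the sum-of-exponentials is genuinely a super-solution near each $z_j$; this calculation relies on the strict inequality $A_3<\sqrt{V_0/2}$ and on the separation $|z_i-z_j|\ge 2(L+1)$, which prevents the contributions from different centers from interacting destructively.
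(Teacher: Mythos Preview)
The paper does not give its own proof of this lemma; it simply declares the result standard and refers to \cite[Lemma 2.6]{BT2} and \cite[Proposition 5.7]{CR}. Your outline follows exactly those standard lines---direct minimization on a small $H^1$-ball for existence and (i), a contraction estimate exploiting $f'(0)=0$ for uniqueness, and Moser iteration plus a comparison principle for the pointwise decay---so in spirit it matches the references the paper invokes.

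One point deserves care. On the inner boundary $\partial D(L+\tfrac12,z)$ your barrier $A_2\sum_j e^{-A_3|x-z_j|}$ has size of order $A_2 e^{-A_3(L+1/2)}$, while Moser iteration only yields a bound $|v|\le C$ there (uniform in $L$, coming from $\|v\|_{H^1(D(L,z))}\le A_1\rho_1$). Matching the two forces $A_2\gtrsim e^{A_3 L}$, so the argument as written does \emph{not} produce an $A_2$ independent of $L$. What your comparison actually yields is
\[
|v(x)|\le A_2\,e^{-A_3\bigl(\operatorname{dist}(x,\{z_1,\dots,z_{\ell_0}\})-L\bigr)}\qquad\text{for }x\in D(L+1,z),
\]
with $A_2,A_3$ now genuinely independent of $L$. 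This version is precisely what is needed in every subsequent use in the paper (in particular for Lemma~\ref{T6}(ii), where $L=\Theta(u)/6$ and the region of interest is at distance $\Theta(u)/3=2L$ from the centers). If you want to phrase the decay purely in terms of $\operatorname{dist}(x,\{z_j\})$, first establish the annular energy decay
\[
\|v\|_{H^1(\{\operatorname{dist}(\cdot,\{z_j\})\ge R\})}^2\le \theta^{\,R-L}\,\|v\|_{H^1(D(L,z))}^2
\]
by testing the equation against $\phi_R^2 v$ with a cutoff $\phi_R$ supported in $\{\operatorname{dist}\ge R\}$ and equal to $1$ on $\{\operatorname{dist}\ge R+1\}$, using $|f(v)v|\le \tfrac12 V_0 v^2$; then feed this local $H^1$-smallness into the Moser step.
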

Choosing a large $R_1\ge 12 R_0$ and a small $\rho_3\in (0,\frac{\rho_0}{8})$, we see that
\begin{equation}\label{T2}
  \|u\|_{H^1(D(\frac{R_1}{12}-1, \Upsilon(u)))}<\rho_1 \mbox{ for all } u\in Z(\rho_3,R_1).
\end{equation}
In fact, for $u=\sum_{j=1}^{\ell_0}U_j(\cdot-y_j)+w\in Z(\rho_3,R_1)$ with $U_j\in S_m$, $|y_i-y_j|\ge R_1$ for $i\neq j$, $y_j\in \frac{1}{\e}N_\beta(\mathcal{M})$  and $\|w\|_{H^1}<\rho_3$, we have
\begin{align*}
  &\|u\|_{H^1(D(\frac{R_1}{12}-1,\Upsilon(u)))}\\
  &\le \|\sum_{j=1}^{\ell_0}U_j(\cdot-y_j)\|_{H^1(D(\frac{R_1}{12}-1,\Upsilon(u)))}+\|w\|_{H^1}\\
  &\le \sum_{j=1}^{\ell_0}\|U_j\|_{H^1(\R^N\setminus B(0, \frac{R_1}{12}-1-2R_0))}+\rho_3.
\end{align*}
Here we used the fact that $|\Upsilon_j(u)-y_j|\le 2R_0$, up to a permutation. Using the uniform exponential decay(Lemma \ref{LE1}) of $U_j\in S_m$, we can show \eqref{T2} for large $R_1$ and small $\rho_3$.

We see that \eqref{T2} implies \eqref{T3} for $L\ge R_1/12$. Thus, by Lemma \ref{Tail}, the problem \eqref{T1} with $z_j=\Upsilon_j(u)$ and $L\in [\frac{R_1}{12},\frac{\Theta(u)}{3}]$ has a unique solution $v_\e(L,\Upsilon(u);u),$ where $\Upsilon(u)=(\Upsilon_1(u),\cdots, \Upsilon_{\ell_0}(u)).$ For $u\in Z(\rho_3,R_1)$ and $L\in[\frac{R_1}{12},\frac{\Theta(u)}{3}]$, we define $\tau_{\e,L}(u)\in H^1(\R^N)$ by
\[
\tau_{\e,L}(u)(x)=\begin{cases}
                    v_\e(L,\Upsilon(u);u)(x), & \mbox{for } x\in D(L,\Upsilon(u)), \\
                    u(x) &\mbox{for } x\in \cup_{j=1}^{\ell_0}B(\Upsilon_j(u),L).
                  \end{cases}
\]
 We choose $\chi(\tau)\in C^\infty(\R)$ such that $\chi(\tau)=1$ for $\tau\in (-\infty,0]$, $\chi(\tau)=0$ for $\tau\in [1,\infty)$, and $| \chi^\prime(\tau)|\le 2$. For $\xi\ge 1$, we set
\[
\chi_\xi(\tau)=\chi(\tau-\xi).
\]
We note that, since  $\Theta(u)\ge R_1-4R_0\ge \frac{2}{3}R_1$ for $u\in Z(\rho_3,R_1)$, we have $\Theta(u)/6\ge \frac{R_1}{12}$ for $u\in Z(\rho_3,R_1)$. Thus, for $j=1, \ldots, \ell_0$, we can define
\begin{align*}
&\tau_\e(u)(x)=\tau_{\e,\frac{\Theta(u)}{6}}(u)(x),\\
&\hat{\tau}_{\e,j}(u)(x)=\chi_{\Theta(u)/3}(|x-\Upsilon_j(u)|)\tau_\e(u)(x).
\end{align*}
We have the following properties (refer to \cite[Proposition 4.4]{BT2}, \cite[Lemma 3.4]{BT3}  and \cite[Proposition 5.24]{CR}).
\begin{lem}\label{T6}
  \begin{enumerate}[(i)]
    \item $I_\e(\tau_\e(u))\le I_\e(u)$ for all $u\in Z(\rho_3,R_1)$.
    \item There exist $A_4, A_5>0$ such that
    \[
    \Big\|\sum_{j=1}^{\ell_0}\hat{\tau}_{\e,j}(u)-\tau_\e(u)\Big\|_{H^1}\le A_4\exp(-A_5\Theta(u)) \mbox{ for } u\in Z(\rho_3, R_1).
    \]
  \end{enumerate}
\end{lem}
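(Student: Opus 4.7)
The plan is to decompose $\R^N$ according to the local centers of mass and exploit the minimizing property of the tail operator $v_\e$.

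For (i), I will split $\R^N = \bigcup_j B(\Upsilon_j(u),\Theta(u)/6)\,\sqcup\,D$ with $D = D(\Theta(u)/6,\Upsilon(u))$. By the construction of $\tau_\e$, we have $\tau_\e(u)=u$ on the balls and $\tau_\e(u)=v_\e(\Theta(u)/6,\Upsilon(u);u)$ on $D$, so
\[
I_\e(\tau_\e(u)) - I_\e(u) = I_{\e,D}(v_\e(\Theta(u)/6,\Upsilon(u);u)) - I_{\e,D}(u).
\]
Since $\Theta(u)/6\ge R_1/12$, we have $D\subset D(R_1/12-1,\Upsilon(u))$, and \eqref{T2} gives $\|u\|_{H^1(D)}<\rho_1\le\rho_2$; hence $u|_D$ is an admissible competitor in the minimization problem \eqref{T4}, and Lemma \ref{Tail}(i) yields $I_{\e,D}(v_\e)\le I_{\e,D}(u)$. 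This proves (i) at once.

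For (ii), I set $g_j(x)=\chi_{\Theta(u)/3}(|x-\Upsilon_j(u)|)$, so that $\sum_j\hat\tau_{\e,j}(u)-\tau_\e(u)=\bigl(\sum_j g_j-1\bigr)\tau_\e(u)$. Since $|\Upsilon_i-\Upsilon_j|\ge\Theta(u)\ge 8R_0$ by Lemma \ref{cm}, for $\Theta(u)$ large the supports $B(\Upsilon_j,\Theta(u)/3+1)$ are pairwise disjoint; therefore $\sum_j g_j-1$ vanishes on $\bigcup_j B(\Upsilon_j,\Theta(u)/3)$, has modulus at most $1$ on the annular shells $B(\Upsilon_j,\Theta(u)/3+1)\setminus B(\Upsilon_j,\Theta(u)/3)$ (where $|\nabla g_j|\le 2$), and equals $-1$ outside $\bigcup_j B(\Upsilon_j,\Theta(u)/3+1)$. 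In particular the difference is supported in $\Omega:=\R^N\setminus\bigcup_j B(\Upsilon_j,\Theta(u)/3)$, where $\tau_\e(u)=v_\e(\Theta(u)/6,\Upsilon(u);u)$.

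On $\Omega$, which lies inside $D(\Theta(u)/6+1,\Upsilon(u))$, the pointwise estimate \eqref{T5} gives $|v_\e(x)|+|\nabla v_\e(x)|\le A_2\exp(-A_3\operatorname{dist}(x,\{\Upsilon_1,\ldots,\Upsilon_{\ell_0}\}))$. Combining this with the product rule and the cut-off bound $|\nabla g_j|\le 2$, the squared $H^1$-norm of the difference is controlled by $C\int_\Omega\exp(-2A_3\operatorname{dist}(x,\{\Upsilon_i\}))\,dx$, which a radial change of variables about the nearest $\Upsilon_j$ bounds by $C\ell_0\Theta(u)^{N-1}\exp(-2A_3\Theta(u)/3)$. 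Absorbing the polynomial factor for large $\Theta(u)$ yields (ii) with any $A_5<A_3/3$ and an appropriate $A_4>0$.

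I do not anticipate any genuine obstacle: (i) is a one-line consequence of the variational characterization of $v_\e$, while (ii) is a routine exponential-decay computation once one notices that the cut-off produces error terms concentrated on $\Omega$, far from the peaks, where the pointwise bound \eqref{T5} is in force.
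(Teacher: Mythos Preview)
Your argument is correct and is the standard one. The paper itself gives no proof of Lemma~\ref{T6}; it simply refers to \cite[Proposition 4.4]{BT2}, \cite[Lemma 3.4]{BT3} and \cite[Proposition 5.24]{CR}, where exactly the mechanism you describe is used: for (i) the variational characterization \eqref{T4} with $u|_{D}$ as competitor, and for (ii) the pointwise exponential decay \eqref{T5} of $v_\e$ on the exterior region together with the cut-off estimates for $\chi_{\Theta(u)/3}$. One small remark: you write $\rho_1\le\rho_2$ to ensure $u|_D$ is admissible in \eqref{T4}; the paper does not state this inequality explicitly, but in the construction of Lemma~\ref{Tail} the constants can be (and in \cite{BT2} are) arranged so that this holds, so the point is harmless.
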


\section{A gradient estimate for the energy functional}\label{A gradient estimate}
\subsection{$\e$-dependent concentration-compactness type result}
To show the existence of multi-bump solutions of \eqref{SPP} for small $\epsilon>0$, we need the following concentration-compactness type result. The following result was proved in \cite[Proposition 5.1]{BT2}.
\begin{prop}\label{c}
  Suppose that a bounded sequence $(u_{\epsilon_n})_{n=1}^\infty\subset H^1(\R^N)$ satisfies for $b\in \R$ and an open bounded set $A\subset \R^N$
  \begin{align*}
    &\epsilon_n \rightarrow 0,\\
    &I_{\epsilon_n}(u_{\epsilon_n})\rightarrow b,\\
    &\| I_{\epsilon_n}^\prime(u_{\epsilon_n})\|_{H^{-1}}\rightarrow 0,\\
    &\|u_{\epsilon_n}\|_{H^1(\R^N\setminus \frac{1}{\epsilon_n}A)}\rightarrow 0 \mbox{ as } n\rightarrow \infty.
  \end{align*}
  Then there exist $\ell \in \N\cup \{0\}, (z_{\epsilon_n}^k)\subset \R^N, z_0^k\in \overline{A}, W^k\in H^1(\R^N)$ for $k=1,\cdots, \ell$ such that as $n\rightarrow \infty$ (after extracting a subsequence if necessary)
  \begin{align}
    &\epsilon_nz_{\epsilon_n}^k\rightarrow z_0^k\in \overline{A} \mbox{ for } k=1,\cdots, \ell;\label{c1} \\
    &|z_{\epsilon_n}^k-z_{\epsilon_n}^{k^\prime}|\rightarrow \infty \mbox{ for } 1\le k \neq k^\prime \le \ell;\label{c2}\\
    &u_{\epsilon_n}(x+z_{\epsilon_n}^k)\rightharpoonup W^k(x) \mbox{ weakly in } H^1(\R^N) \mbox{ for } k=1,\cdots, \ell;\label{c3}\\
    &W^k \mbox{ is a positive solution of } -\Delta W+V(z_0^k)W=f(W) \mbox{ in } \R^N;\label{c4}\\
    &\|u_{\epsilon_n}(x)-\sum_{k=1}^{\ell}W^k(x-z_{\epsilon_n}^k)\|_{H^1}\rightarrow 0;\label{c5}\\
    &I_{\epsilon_n}(u_{\epsilon_n})\rightarrow \sum_{k=1}^{\ell}L_{V(z_0^k)}(W^k) \label{c6}.
  \end{align}
\begin{rmk}
When $\ell=0$, the claim of Proposition \ref{c} is
\[
\|u_{\e_n}\|_{H^1}\rightarrow 0 \mbox{ and } I_{\e_n}(u_{\e_n})\rightarrow 0.
\]
\end{rmk}
\end{prop}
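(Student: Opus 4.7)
The plan is to prove this via an iterative concentration-compactness scheme adapted to the $\e$-dependent framework. The base case is trivial: if $\|u_{\e_n}\|_{H^1}\to 0$, then $\ell=0$ and all claims are vacuous. Otherwise I pass to a subsequence with $\|u_{\e_n}\|_{H^1}\ge\delta>0$. A standard vanishing--nonvanishing dichotomy on balls of fixed radius then yields $R>0$, $\eta>0$ and $z^1_{\e_n}\in\R^N$ with $\int_{B(z^1_{\e_n},R)}|u_{\e_n}|^2\,dx\ge\eta$. The hypothesis $\|u_{\e_n}\|_{H^1(\R^N\setminus\frac{1}{\e_n}A)}\to 0$ forces $\e_n z^1_{\e_n}$ to remain in a neighborhood of $\overline{A}$, so up to a subsequence $\e_n z^1_{\e_n}\to z_0^1\in\overline{A}$, establishing \eqref{c1}. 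Rellich compactness combined with the uniform $H^1$-bound then gives $u_{\e_n}(\cdot+z^1_{\e_n})\rightharpoonup W^1\not\equiv 0$, which is \eqref{c3}.

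To identify the limiting equation, I test $I'_{\e_n}(u_{\e_n})\to 0$ in $H^{-1}$ against $\varphi(\cdot-z^1_{\e_n})$ for arbitrary $\varphi\in C^\infty_c(\R^N)$. Since $V_{\e_n}(\,\cdot+z^1_{\e_n})=V(\e_n\cdot+\e_n z^1_{\e_n})\to V(z_0^1)$ uniformly on compact sets by (V1), and $f$ is subcritical by (f2) so that $f(u_{\e_n}(\cdot+z^1_{\e_n}))\to f(W^1)$ in $L^{p'}_{\rm loc}$, passing to the weak limit yields $-\Delta W^1+V(z_0^1)W^1=f(W^1)$; strict positivity of $W^1$ follows from the convention $f(t)=0$ for $t\le 0$, the maximum principle, and the positive $L^2$ mass, giving \eqref{c4}.

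For the iteration, I set $v^1_{\e_n}(x)=u_{\e_n}(x)-W^1(x-z^1_{\e_n})$. Using the exponential decay of $W^1$ from Lemma \ref{LE1} and a Brezis--Lieb splitting for the nonlinear term, one verifies $I_{\e_n}(v^1_{\e_n})=I_{\e_n}(u_{\e_n})-L_{V(z_0^1)}(W^1)+o(1)$, $\|I'_{\e_n}(v^1_{\e_n})\|_{H^{-1}}\to 0$, and $\|v^1_{\e_n}\|_{H^1(\R^N\setminus\frac{1}{\e_n}A)}\to 0$. Thus the same hypotheses hold for $v^1_{\e_n}$, and the procedure repeats to produce $z^2_{\e_n},W^2,\ldots$. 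The separation \eqref{c2} follows because if $|z^{k+1}_{\e_n}-z^j_{\e_n}|$ stayed bounded for some $j\le k$, the weak limit $W^{k+1}$ of $v^k_{\e_n}(\cdot+z^{k+1}_{\e_n})$ would have to be zero, contradicting its construction. Termination after finitely many steps is ensured by a uniform lower bound $L_{V(z_0^k)}(W^k)\ge c_0>0$ on the energy of any nontrivial solution (via \eqref{LE}, the Pohozaev identity \eqref{Pohozaev}, and $V(z_0^k)\ge V_0>0$), combined with the finite upper bound $I_{\e_n}(u_{\e_n})\to b$.

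Once the iteration halts at some $\ell$, the residual $v^\ell_{\e_n}$ satisfies the same hypotheses but no further concentration can occur; this forces $\|v^\ell_{\e_n}\|_{H^1}\to 0$ by pairing $I'_{\e_n}(v^\ell_{\e_n})\to 0$ with $v^\ell_{\e_n}$ and invoking a Lions-type vanishing lemma together with (f1)--(f2) to control $\int f(v^\ell_{\e_n})v^\ell_{\e_n}\,dx$. This yields \eqref{c5}, and telescoping the energy splittings from each step gives \eqref{c6}. I expect the main obstacle to be the bookkeeping in the iteration: verifying, at each stage, that subtracting the previous bumps preserves both the derivative-vanishing hypothesis in $H^{-1}$ and the localization outside $\frac{1}{\e_n}A$, while simultaneously decoupling the nonlinear term across well-separated centers. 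This decoupling rests crucially on the uniform exponential decay of each $W^k$ supplied by Lemma \ref{LE1}, which allows a clean Brezis--Lieb-type additivity at every step.
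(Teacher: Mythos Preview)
The paper does not supply its own proof of this proposition; it simply cites \cite[Proposition 5.1]{BT2}. Your outline is the standard $\e$-dependent concentration--compactness scheme and is essentially how \cite{BT2} proceeds, so the approach is correct and matches the cited source.

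One small correction: you invoke Lemma~\ref{LE1} for the exponential decay of the profiles $W^k$, but that lemma is stated only for \emph{least energy} solutions in $S_a$, whereas the $W^k$ arising here are merely positive solutions of $-\Delta W+V(z_0^k)W=f(W)$. The decay still holds---it follows from (f1), (V1), and a standard comparison at infinity (or from elliptic regularity plus Kato's inequality)---but it is not a direct consequence of Lemma~\ref{LE1} as written. Likewise, your termination argument via the energy lower bound $L_{V(z_0^k)}(W^k)\ge c_0>0$ is fine (using \eqref{LE} and a Sobolev-type bound), though the more common bookkeeping in \cite{BT2} tracks the $H^1$-norm splitting $\|u_{\e_n}\|_{H^1}^2=\sum_{j\le k}\|W^j\|_{H^1}^2+\|v^k_{\e_n}\|_{H^1}^2+o(1)$, which terminates for the same reason and avoids any sign issue with $b$.
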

\subsection{A gradient estimate}
We define $\hat{\rho}: Z_0\rightarrow \R$ by
\[
\hat{\rho}(u)=\inf_{U_j\in S_m, y_j\in \frac{1}{\e}N_\beta(\mathcal{M})}\|u-\sum_{j=1}^{\ell_0}U_j(x-y_j)\|_{H^1}.
\]
The following proposition  gives a uniform
estimate of $\|I^\prime_\e(\cdot)\|_{H^{-1}}$ in an annular neighborhood of a set of expected solutions.
\begin{prop}\label{g1}
  Let $b_\epsilon$ be a sequence satisfying $b_\epsilon\rightarrow\ell_0 E_m$ as $\epsilon\rightarrow0$. Then for any $0<\rho<\rho^\prime<\frac{\rho_0}{8}$ and $12R_0<r^\prime<r$, there exists $\nu_0>0$ such that for small $\epsilon>0$
  \[
  \|I_\epsilon^\prime(u)\|_{H^{-1}}\ge \nu_0
  \]
  for all $u\in Z(\rho^\prime,r^\prime)\setminus Z(\rho,r)$ with $I_\epsilon(u)\le b_\epsilon$.
\end{prop}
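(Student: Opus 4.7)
The plan is to argue by contradiction and reduce to the concentration--compactness result Proposition \ref{c}. Suppose the conclusion fails: there exist $\epsilon_n\to 0$ and a sequence
\[
u_n=\sum_{j=1}^{\ell_0}U_j^n(\cdot-y_j^n)+w_n\in Z(\rho',r')\setminus Z(\rho,r),
\]
with $U_j^n\in S_m$, $y_j^n\in \frac{1}{\epsilon_n}N_\beta(\mathcal{M})$, $|y_i^n-y_j^n|\ge r'$, $\|w_n\|_{H^1}<\rho'$, $I_{\epsilon_n}(u_n)\le b_{\epsilon_n}$ and $\|I_{\epsilon_n}'(u_n)\|_{H^{-1}}\to 0$. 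By Lemma \ref{LE1}, $S_m$ is compact, so after extracting a subsequence $U_j^n\to U_j^\infty\in S_m$ in $H^1(\R^N)$, and $(u_n)$ is bounded in $H^1(\R^N)$.

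To apply Proposition \ref{c}, I would fix an open bounded set $A$ with $N_{2\beta}(\mathcal{M})\subset A\subset O$ and pass through the tail-minimizing operator $\tau_{\epsilon_n}$ from Lemma \ref{Tail}. The exponential decay \eqref{T5} yields $\|\tau_{\epsilon_n}(u_n)\|_{H^1(\R^N\setminus \frac{1}{\epsilon_n}A)}\to 0$, and Lemma \ref{T6} bounds both $\|\tau_{\epsilon_n}(u_n)-u_n\|_{H^1}$ and $|I_{\epsilon_n}(\tau_{\epsilon_n}(u_n))-I_{\epsilon_n}(u_n)|$ in terms of the (controlled) mass of $u_n$ outside a tube around $\{y_j^n\}$, so that the hypotheses of Proposition \ref{c} are inherited. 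This produces $\ell\in \N\cup\{0\}$, centres $z_{\epsilon_n}^k$, limits $z_0^k\in \overline{A}\subset \overline{O}$, and profiles $W^k$ satisfying \eqref{c1}--\eqref{c6}.

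The heart of the argument is the energy bookkeeping. On $\overline{O}$ we have $V\ge m$, with equality exactly on $\mathcal{M}$, so $a\mapsto E_a$ being non-decreasing gives $L_{V(z_0^k)}(W^k)\ge E_{V(z_0^k)}\ge E_m$. Combining this with \eqref{c6} and $I_{\epsilon_n}(u_n)\to \ell_0 E_m$ forces $\ell\le\ell_0$; conversely, comparing $H^1$-norms via \eqref{c5}, together with the uniform lower bound $\|U_j^n\|_{H^1}\ge\rho_0$ from \eqref{L1} and $\|w_n\|_{H^1}<\rho'<\rho_0/8$, rules out $\ell<\ell_0$. Hence $\ell=\ell_0$; strict monotonicity of $E_a$ in $a$ (via the Pohozaev characterization \eqref{p1}) forces $V(z_0^k)=m$, so $z_0^k\in \mathcal{M}$, and each $W^k$ is, up to translation, an element of $S_m$. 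Pairing the $\ell_0$ profiles with the $\ell_0$ bumps (after a permutation) in the decomposition $u_n=\sum U_j^n(\cdot-y_j^n)+w_n$, equation \eqref{c5} forces $\|w_n\|_{H^1}\to 0$ and $\min_{i\ne j}|y_i^n-y_j^n|\to\infty$, so $u_n\in Z(\rho,r)$ for all large $n$, contradicting the assumption.

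The principal obstacle is the tail-decay step: $w_n$ is only known to satisfy $\|w_n\|_{H^1}<\rho'$ and need not decay at infinity, so a naive cut-off would destroy the near-critical estimate $\|I_{\epsilon_n}'(u_n)\|_{H^{-1}}\to 0$. Replacing $u_n$ by its tail-minimizer $\tau_{\epsilon_n}(u_n)$ is designed precisely to produce an $H^1$-exponentially decaying tail without essentially changing the energy or critical slope, and the quantitative statements in Lemma \ref{Tail} and Lemma \ref{T6} make this transfer rigorous. A secondary delicate point is justifying strict monotonicity of $E_a$ in $a$, which is what pins each profile down to $\mathcal{M}$; this follows from the variational characterization \eqref{p1} together with the scaling behaviour of $P$.
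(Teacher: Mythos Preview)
Your contradiction setup and the energy bookkeeping showing $\ell\le\ell_0$ and $z_0^k\in\mathcal{M}$ are the same as the paper's.  However, two steps in your sketch do not go through as written.

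\textbf{Tail decay.}  You propose to pass to $\tau_{\epsilon_n}(u_n)$ and then invoke Proposition~\ref{c}.  But Lemmas~\ref{Tail} and~\ref{T6} give neither $\|\tau_{\epsilon_n}(u_n)-u_n\|_{H^1}\to 0$ nor $\|I_{\epsilon_n}'(\tau_{\epsilon_n}(u_n))\|_{H^{-1}}\to 0$.  The tail minimizer replaces $u_n$ on a region where $u_n$ may carry mass of order $\rho_1$, so the modification is not $o(1)$; and while $\tau_{\epsilon_n}(u_n)$ solves the equation in the exterior and equals $u_n$ in the balls, the mismatch of normal derivatives along $\partial B(\Upsilon_j,L)$ produces a boundary contribution in $I_\epsilon'$ that need not vanish.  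The paper instead establishes $\|u_\epsilon\|_{H^1(\R^N\setminus\frac{1}{\epsilon}N_{2\beta}(\mathcal{M}))}\to 0$ directly: using a dyadic pigeonhole to locate a thin shell where $\|u_\epsilon\|_{H^1}$ is $o(1)$, then testing $I_\epsilon'(u_\epsilon)$ against $\varphi_\epsilon u_\epsilon$ for a cut-off $\varphi_\epsilon$ supported outside that shell.  This exploits the Palais--Smale information itself to kill the tail and is the missing ingredient.

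\textbf{Ruling out $\ell<\ell_0$.}  Your $H^1$-norm comparison does not close: the separation $r'$ is fixed \emph{a priori}, while the exponential decay scale of the limiting profiles $W^k$ is determined only after extracting the subsequence, so a single $W^k$ could in principle account for several $y_j^n$'s sitting at mutual distance $\ge r'$.  The paper handles this differently: it first shows $\min_{i\ne j}|y_i^n-y_j^n|\to\infty$ by observing that if some pairwise distances stayed bounded, the weak limit of $u_{\epsilon_n}(\cdot+y_{i_0}^n)$ would lie in a set of multi-bump configurations that are provably non-radial, contradicting the Gidas--Ni--Nirenberg symmetry of positive solutions to the limit equation.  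Once the bumps are known to separate, one may take $z_{\epsilon_n}^k=\Upsilon_k(u_{\epsilon_n})$ and $\ell\ge\ell_0$ follows immediately.
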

\begin{proof}
  It suffices to show that if there exist $\e_n>0$ and $u_{\e_n}$ satisfying
  \begin{align}
    &\e_n\rightarrow 0,\label{g3}\\
    &u_{\e_n}\in Z(\rho^\prime,r^\prime),\label{g4}\\
    &I_{\e_n}(u_{\e_n})\le b_{\e_n},\label{g5}\\
    &\|I_{\e_n}^\prime(u_{\e_n})\|_{H^{-1}}\rightarrow0\label{g6},
  \end{align}
  then
  \begin{align}
    &\hat{\rho}(u_{\e_n})\rightarrow 0,\label{g7}\\
    &\Theta(u_{\e_n})\rightarrow\infty \mbox{ as } n\rightarrow\infty.\label{g8}
    \end{align}
  The behavior \eqref{g7}-\eqref{g8} imply $u_{\e_n}\in Z(\rho,r)$ for large $n$.

  Based on assumptions \eqref{g3}-\eqref{g6}, we study the behavior of $u_{\e_n}$ as $n\rightarrow\infty$. Note that \eqref{g4} implies that $(u_{\e_n})$ is a bounded sequence in $H^1(\R^N)$. For the sake of simplicity of notation, we write $\e$ for $\e_n$.  We divide the proof into several steps.\\
  {\bf Step 1.} For $u_\e(x)=\sum_{j=1}^{\ell_0}U_j(x-y_j)+w(x) \in Z(\rho^\prime, r^\prime),$ we have $\|u_\e\|_{H^1(\R^N\setminus \cup_{j=1}^{\ell_0}\overline{B}(y_j,R_0))}< \frac{1}{4}\rho_0.$\\
  By \eqref{g4}, we may write $u_\e(x)=\sum_{j=1}^{\ell_0}U_j(x-y_j)+w(x),$ where $U_j\in S_m$, $|y_i-y_j|\ge r^\prime$ for $i\neq j$, $y_j\in \frac{1}{\e}N_\beta(\mathcal{M})$ and $\|w\|_{H^1}\le \rho^\prime$. By \eqref{L}, it follows that
  \begin{align*}
    &\|u_\e\|_{H^1(\R^N\setminus \cup_{j=1}^{\ell_0}\overline{B}(y_j,R_0))}\\
    &\le \|\sum_{j=1}^{\ell_0}U_j(x-y_j)\|_{H^1(\R^N\setminus \cup_{j=1}^{\ell_0}\overline{B}(y_j,R_0))}+\rho^\prime\\
    &\le \sum_{j=1}^{\ell_0}\|U_j(x-y_j)\|_{H^1(\R^N\setminus \overline{B}(y_j,R_0))}+\rho^\prime\\
    &<\frac{1}{8}\rho_0+\frac{1}{8}\rho_0=\frac{1}{4}\rho_0.
  \end{align*}
  {\bf Step 2.} $\|u_\e\|_{H^1(\R^N\setminus \frac{1}{\e}N_{2\beta}(\mathcal{M}))}\rightarrow 0$ as $\e \rightarrow0$.\\
  By Step 1 and the facts $\overline{B}(y_j,R_0)\subset\frac{1}{\e}N_{\frac{3}{2}\beta}(\mathcal{M})$ for all $j=1,\ldots, \ell_0$, we have
  \begin{equation}\label{g10}
  \|u_\e\|_{H^1(\R^N\setminus \frac{1}{\e}N_{\frac{3}{2}\beta}(\mathcal{M}))}<\frac{\rho_0}{4}.
  \end{equation}
  Setting $n_\e=[\frac{\beta}{2\e}]$, we get
  \[
  \sum_{k=1}^{n_\e}\|u_\e\|_{H^1(\frac{1}{\e}(N_{\frac{3}{2}\beta+\e k}(\mathcal{M})\setminus N_{\frac{3}{2}\beta+\e (k-1)}(\mathcal{M})))}^2\le \|u_\e\|_{H^1(\R^N\setminus \frac{1}{\e}N_{\frac{3}{2}\beta}(\mathcal{M}))}^2<\frac{1}{16}\rho_0^2.
  \]
  Then we can find $k_\e\in \{1,\ldots, n_\e\}$ such that
  \begin{equation}\label{g9}
  \|u_\e\|_{H^1(\frac{1}{\e}(N_{\frac{3}{2}\beta+\e k_\e}(\mathcal{M})\setminus N_{\frac{3}{2}\beta+\e (k_\e-1)}(\mathcal{M})))}\rightarrow0 \mbox{ as } \e \rightarrow 0.
  \end{equation}
  We take a function $\varphi_\e(x)\in C^\infty(\R^N)$ such that
  \begin{equation}\label{pp20}
  \varphi_\e(x)=\begin{cases}
                  1, & \mbox{for } x\in \R^N\setminus\frac{1}{\e}N_{\frac{3}{2}\beta+\e (k_\e-\frac14)}(\mathcal{M}), \\
                  0, & \mbox{for } x\in \frac{1}{\e}N_{\frac{3}{2}\beta+\e (k_\e-\frac34))}(\mathcal{M}),
                \end{cases}
                |\nabla \varphi_\e(x)|\le C \mbox{ for } x\in \R^N,
  \end{equation}
  where $C>0$ is independent of $\e$. We note that, by (f1)-(f3), for any $a>0$ there exists $C_a>0$ such that
   \[
   |f(x)|\le a|x|+C_a|x|^p \mbox{ for all } x \in \R.
   \]
   By computing $I^\prime_\e(u_\e)(\varphi_\e u_\e)$ and using \eqref{g9}, we see that, for $a_0\in (0,\frac{1}{2}V_0),$
  \begin{align*}
    I_\e^\prime(u_\e)(\varphi_\e u_\e)&=\int_{\R^N}\nabla u_\e\cdot \nabla(\varphi_\e u_\e)+V(\e x)u_\e^2\varphi_\e-f(u_\e)\varphi_\e u_\e dx\\
    &=\int_{\R^N\setminus\frac{1}{\e}N_{\frac{3}{2}\beta+\e (k_\e-\frac14)}(\mathcal{M})}|\nabla u_\e|^2+V(\e x)u_\e^2-f(u_\e)u_\e dx+o(1)\\
    &\ge\int_{\R^N\setminus\frac{1}{\e}N_{\frac{3}{2}\beta+\e (k_\e-\frac14)}(\mathcal{M})}|\nabla u_\e|^2+V_0u_\e^2-a_0u_\e^2-C_{a_0}u_\e^{p+1}dx+o(1)\\
    &\ge\int_{\R^N\setminus\frac{1}{\e}N_{\frac{3}{2}\beta+\e (k_\e-\frac14)}(\mathcal{M})}|\nabla u_\e|^2+\frac{V_0}{2}u_\e^2-C_{a_0}u_\e^{p+1}dx+o(1)\\
& =\int_{\R^N\setminus A_\e}|\nabla u_\e|^2+\frac{V_0}{2}u_\e^2-C_{a_0}u_\e^{p+1}dx+o(1)\mbox{ as } \e \rightarrow 0,
  \end{align*}
where $A_\e$ is a smooth domain satisfying $\frac{1}{\e}N_{\frac{3}{2}\beta+\e (k_\e-\frac14)}(\mathcal{M})\subset A_\e \subset \frac{1}{\e} N_{\frac{3}{2}\beta+\e k_\e}(\mathcal{M})$.  Then, by \eqref{g10}, if we choose $\rho_0$ small enough, there exists $\tilde{c}>0$ such that
  \begin{equation}\label{a2}
   I_\e^\prime(u_\e)(\varphi_\e u_\e) \ge \tilde{c}\|u_\e\|^2_{H^1(\R^N\setminus\frac{1}{\e}N_{\frac{3}{2}\beta+\e k_\e}(\mathcal{M}))} \mbox{ for small } \e>0.
  \end{equation}
 Thus, by \eqref{g6}, \eqref{pp20} and \eqref{a2}, since $\{u_\e\}$ is a bounded sequence in $H^1(\R^N)$, we get
  \[
  \|u_\e\|_{H^1(\R^N\setminus \frac{1}{\e}N_{2\beta}(\mathcal{M}))}\le \|u_\e\|_{H^1(\R^N\setminus\frac{1}{\e}N_{\frac{3}{2}\beta+\e k_\e}(\mathcal{M}))}\rightarrow 0 \mbox{ as } \e\rightarrow 0.
  \]
  {\bf Step 3.} There exist $\ell\in \N, (z_\e^k)\subset \R^N, z_0^k\in N_{2\beta}(\mathcal{M}), W^k\in H^1(\R^N)$ for $k=1, \ldots, \ell$ such that \eqref{c1}-\eqref{c6} hold after extracting a subsequence if necessary.\\
 By applying Proposition \ref{c} to $A=Int(N_{2\beta}(\mathcal{M}))$, we can show Step 3.
   We remark that $(z_\e^k)\subset\R^N$ are found as points that satisfies
  \begin{equation}\label{a1}
    \int_{B(z_\e^k,1)}|u_\e|^2dx\nrightarrow0,
  \end{equation}
  $u_\e(x+z_\e^k)\rightharpoonup W^k(x)$ weakly in $H^1(\R^N)$ and strongly in $H^1_{loc}(\R^N)$ as $\e \rightarrow 0$, after extracting a subsequence.\\
{\bf Step 4.} $(u_\e)$ satisfies $\Theta(u_\e)\rightarrow\infty$ as $\e \rightarrow 0$.\\
 By \eqref{g4}, we may write
 \[
 u_\e(x)=\sum_{j=1}^{\ell_0}U_j(x-y_j)+w(x),
 \]
  where $U_j\in S_m, y_j\in \frac1\e N_{\beta}(\mathcal{M})$,  $\min_{1\le i\neq j\le \ell_0}|y_i-y_j|\ge r^\prime$ and $\|w\|_{H^1}\le \rho^\prime$. We fix $i_0\in \{1,\ldots, \ell_0\}$ arbitrarily. We remark that \eqref{a1} holds for $y_{i_0}$ if  $r^\prime$ is large and  $\rho^\prime$ is small. Suppose that there exists an index $i\neq i_0$ such that $|y_i-y_{i_0}|$ stays bounded as $\e\rightarrow0$. We denote by $\{i_1, \ldots, i_k\}$ the set of such indices and let
 \[
 \overline{R}=\limsup_{\e\rightarrow0}\max_{0\le j\neq j^\prime\le k}|y_{i_j}-y_{i_{j^\prime}}|\in [r^\prime, \infty).
 \]
 Then we can see that the weak limit $W^{i_0}=\lim_{\e\rightarrow0}u_\e(x+y_{i_0})$ satisfies $W^{i_0}(x)\in K$, where
 \[
 K=\{\sum_{j=0}^{k}U_j(x-y_j)+w\ | \ y_j\in \R^N, |y_j-y_{j^\prime}|\in [r^\prime, \overline{R}] \mbox{ for } j\neq j^\prime, U_j\in S_m, \|w\|_{H^1}\le \rho^\prime\}.
 \]
 We claim that any $u=\sum_{j=0}^{k}U_j(x-y_j)+w\in K$ is not radially symmetric. In fact, by \eqref{L} and the fact that $|y_i-y_j|\ge 12R_0$ for $i\neq j$, it follows that
 \begin{align*}
   &\|\sum_{i=0}^{k}U_i(x-y_i)+w\|_{H^1(B(y_j,R_0))}\\
   &\ge \|U_j\|_{H^1(B(0,R_0))}-\sum_{i\in \{0,\ldots, k\}\setminus\{j\}}\|U_i\|_{H^1(\R^N\setminus B(0,R_0))}-\|w\|_{H^1}\\
   &>\frac{3}{4}\rho_0-\frac{k}{8\ell_0}\rho_0-\frac{1}{8}\rho_0>\frac{1}{2}\rho_0 \mbox{ for any } j\in \{0,\ldots, k\},\\
   &\|\sum_{i=0}^{k}U_i(x-y_i)+w\|_{H^1(\R^N\setminus \cup_{j=1}^k B(y_j,R_0))}\\
   &\le \sum_{i=0}^{k}\|U_i\|_{H^1(\R^N\setminus B(0,R_0))}+\|w\|_{H^1}\\
   &<\frac{k+1}{8\ell_0}\rho_0+\frac{1}{8}\rho_0\le\frac{1}{4}\rho_0.
 \end{align*}
 This implies that any $u\in K$ is not radially symmetric even up to a translation. In particular, $W^{i_0}$ is not radially symmetric even up to a transration. On the other hand, by Step 3, $W^{i_0}(x)$ is a positive solution of $-\Delta W+ V(z_0^{i_0})W=f(W)$ with $z_0^{i_0}=\lim_{\e\rightarrow0}\e y_{i_0}$, and then, by the result of Gidas-Ni-Nirenberg \cite{GNN}, $W^{i_0}$ must be radially symmetric up to a translation. It is a contradiction and we have $\Theta(u_\e)\ge \min_{1\le i\neq j\le \ell_0}|y_i-y_j|-4R_0\rightarrow\infty$ as $\e\rightarrow0$. Thus, the conclusion of Step 4 holds.\\
{\bf Step 5.} $\hat{\rho}(u_\e)\rightarrow 0.$\\
By Step 4, we can take $z_\e^k=\Upsilon_k(u_\e)$ for $k=1,\ldots, \ell_0$  in Step 3 and we see that the constant $\ell$ in Step 3 satisfies $\ell \ge\ell_0$. We claim $\ell=\ell_0$. In fact, it follows from \eqref{c6} that
\[
\sum_{k=1}^{\ell}L_{V(z_0^k)}(W^k)=\lim_{\e\rightarrow0}I_\e(u_\e)\le \lim_{\e\rightarrow 0}b_\e\le \ell_0E_m.
\]
By (V2) and the fact that $z_0^k\in N_{2\beta}(\mathcal{M}),$ recalling from \cite{JT} that $E_a>E_b$ if $a>b$, , we have $L_{V(z_0^k)}(W^k)\ge E_{V(z_0^k)}\ge E_m$. This implies $\ell=\ell_0$. We remark that $z_0^k\in \mathcal{M}$ and $W^k(x+h_k)\in S_m$ for some $h_k\in \R^N$. It follows that
\[
\hat{\rho}(u_\e)\le \|u_\e(x)-\sum_{k=1}^{\ell_0}W^k(x+h_k-(z_\e^k+h_k))\|_{H^1}\rightarrow0,
\]
as $\e\rightarrow 0$, which completes the proof of Proposition \ref{g1}.
\end{proof}

For a proof of our main theorem, we also need the following compactness result. The following result was proved in \cite[Proposition 5.6]{BT2}.
\begin{prop}\label{g2}
  For a fixed $\e\in(0,1]$, suppose that for some $b\in \R$, there exists a sequence $\{u_j\}_{j=1}^\infty\subset H^1(\R^N)$ satisfying
  \begin{align*}
    &u_j\in Z_0,\\
    &\|I_\e^\prime(u_j)\|_{H^{-1}}\rightarrow0,\\
    &I_\e(u_j)\rightarrow b \mbox{ as } j\rightarrow\infty.
  \end{align*}
  Then $b$ is a critical value of $I_\e$ and the sequence $\{u_j\}_{j=1}^\infty\subset H^1(\R^N)$ has a strongly convergent subsequence in $H^1(\R^N)$.
\end{prop}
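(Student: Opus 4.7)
The plan is a Palais--Smale analysis that leverages two special features of $Z_0$: since $\e>0$ is fixed and $\mathcal{M}$ is bounded, the admissible bump centers $\frac{1}{\e}N_\beta(\mathcal{M})$ form a bounded subset of $\R^N$; and for $u_j\in Z_0$ the remainder $w_j$ has $H^1$ norm below the small threshold $\rho_0/8$. The proof proceeds in three steps: extract a natural weak limit $u_\infty$, verify it is a critical point, and then upgrade to strong convergence.

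To extract the weak limit, I would write $u_j=\sum_{k=1}^{\ell_0}U_k^j(\cdot-y_k^j)+w_j$ with $U_k^j\in S_m$, $y_k^j\in \frac{1}{\e}N_\beta(\mathcal{M})$, and $\|w_j\|_{H^1}<\rho_0/8$. The compactness of $S_m$ in $H^1$ given by Lemma \ref{LE1}, together with boundedness of $\frac{1}{\e}N_\beta(\mathcal{M})$, lets me pass to a subsequence along which $U_k^j\to U_k^\infty\in S_m$ strongly in $H^1$, $y_k^j\to y_k^\infty$, and $w_j\rightharpoonup w_\infty$. Setting $u_\infty=\sum_k U_k^\infty(\cdot-y_k^\infty)+w_\infty$ gives $u_j\rightharpoonup u_\infty$ in $H^1$. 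Passing to the limit in $\langle I_\e'(u_j),\varphi\rangle\to 0$ against $\varphi\in C_c^\infty(\R^N)$, the quadratic terms converge by weak convergence and the nonlinear term $\int f(u_j)\varphi\,dx$ by Rellich compactness combined with the subcritical growth (f2), yielding $I_\e'(u_\infty)=0$.

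The main step is strong convergence of $v_j:=u_j-u_\infty$. Since the bump parts converge strongly, $v_j=(w_j-w_\infty)+o_{H^1}(1)$, so $v_j\rightharpoonup 0$ and $\limsup_j\|v_j\|_{H^1}\le \rho_0/4$. A Brezis--Lieb type splitting gives $I_\e(v_j)\to b-I_\e(u_\infty)$ and $\|I_\e'(v_j)\|_{H^{-1}}\to 0$. By a Lions-type dichotomy either $v_j$ vanishes, in which case $\int f(v_j)v_j\,dx\to 0$ via (f1)--(f2), and testing $\langle I_\e'(v_j),v_j\rangle\to 0$ together with $V_\e\ge V_0>0$ forces $\|v_j\|_{H^1}\to 0$; or there exist $z_j\in\R^N$ with $v_j(\cdot+z_j)\rightharpoonup \bar W\neq 0$. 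Since $v_j\rightharpoonup 0$ one must have $|z_j|\to\infty$; extracting further so that $V_\e(z_j)\to V^\infty\in[V_0,\sup V]$ and using local uniform continuity of $V_\e$ on compact sets, the profile $\bar W$ is a nontrivial solution of $-\Delta\bar W+V^\infty\bar W=f(\bar W)$. But because the $\ell_0$ bumps of $u_j$ have centers $y_k^j$ staying bounded while $|z_j|\to\infty$, one has $U_k^j(\cdot+z_j-y_k^j)\to 0$ in $L^2_{\mathrm{loc}}$ and $u_\infty(\cdot+z_j)\to 0$ in $L^2_{\mathrm{loc}}$; hence $\bar W=\lim w_j(\cdot+z_j)$ weakly, giving $\|\bar W\|_{H^1}\le \rho_0/8$ by lower semicontinuity. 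This contradicts the universal positive lower bound $\|\bar W\|_{H^1}\ge c_0$ for nontrivial solutions of the limit equations (derived from the equation together with (f1)--(f2) and $V^\infty\ge V_0$), the threshold $\rho_0/8$ in the definition of $Z_0$ being chosen to undercut $c_0$.

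The main technical obstacle is the extraction of the limit equation for $\bar W$ in the concentration-at-infinity case, since (V1) imposes no asymptotic structure on $V$. This is handled by extracting a subsequence making $V_\e(z_j)$ convergent and then combining local uniform continuity of $V_\e$ on bounded sets with a diagonal argument. Once both vanishing and concentration have been resolved in favor of $v_j\to 0$ in $H^1$, the conclusion follows: $u_j\to u_\infty$ strongly, $I_\e(u_\infty)=b$, and $I_\e'(u_\infty)=0$, so $b$ is a critical value.
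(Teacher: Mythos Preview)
The paper does not prove this proposition; it merely cites \cite[Proposition 5.6]{BT2}. Your sketch has the right architecture---extract a weak limit using compactness of $S_m$ and boundedness of $\frac{1}{\e}N_\beta(\mathcal{M})$ (crucially, $\e$ is fixed), verify criticality, and upgrade to strong convergence by exploiting the smallness of the remainder $w_j$---and this is indeed the standard route.

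There is one genuine gap, in the concentration alternative. You assert that if $v_j(\cdot+z_j)\rightharpoonup\bar W\neq 0$ with $|z_j|\to\infty$, then $\bar W$ solves the constant-coefficient equation $-\Delta\bar W+V^\infty\bar W=f(\bar W)$ with $V^\infty=\lim V_\e(z_j)$, invoking ``local uniform continuity of $V_\e$ on bounded sets''. But $\e$ is fixed, so $|\e z_j|\to\infty$, and (V1) gives no asymptotic regularity of $V$: the translated potentials $V_\e(\cdot+z_j)=V(\e\cdot+\e z_j)$ need not converge to any constant, and uniform continuity on compact sets is irrelevant once the compact sets march off to infinity. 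Your proposed ``diagonal argument'' does not repair this. Two clean fixes are available. First, pass to a further subsequence along which $V_\e(\cdot+z_j)\rightharpoonup a_\infty$ weak-$*$ in $L^\infty_{\mathrm{loc}}$; then $V_0\le a_\infty\le\sup V$ a.e.\ and $\bar W$ solves $-\Delta\bar W+a_\infty\bar W=f(\bar W)$, from which the lower bound $\|\bar W\|_{H^1}\ge c_0$ still follows by testing against $\bar W$ and using $a_\infty\ge V_0$ with (f1)--(f2). Second---and more economically---bypass the Lions dichotomy altogether: since $\limsup_j\|v_j\|_{H^1}\le\rho_0/4$ and $\langle I_\e'(v_j),v_j\rangle\to 0$, the inequality $|f(t)t|\le\tfrac{V_0}{2}t^2+C|t|^{p+1}$ combined with $V_\e\ge V_0$ and Sobolev gives $c\|v_j\|_{H^1}^2\le C'\|v_j\|_{H^1}^{p+1}+o(1)$, which forces $\|v_j\|_{H^1}\to 0$ directly once $\rho_0/4$ is below the threshold $(c/C')^{1/(p-1)}$. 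Either way, you should make explicit (as you hint in your final clause) that this smallness is compatible with the definition $\rho_0=\inf_{U\in S_m}\|U\|_{H^1}$; this is where the factor $1/8$ in $Z_0=Z(\rho_0/8,12R_0)$ earns its keep.
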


\section{An initial path and an intersection property}\label{an intersection property}
To find a critical point of $I_\e$, we will use a deformation argument. In this section, we choose an initial path for a deformation argument.
\subsection{An initial path}
We choose and fix a least energy solution $U_0\in S_m$ of the limit problem \eqref{limit eq} with $a=m$. For $p=(p_1,\cdots, p_{\ell_0})\in (\frac1\e\mathcal{M})^{\ell_0}$ and $s=(s_1,\cdots ,s_{\ell_0})\in (0,\infty)^{\ell_0},$ we set
\[
\gamma_0(p,s)(x)=\sum_{j=1}^{\ell_0}U_0(\frac{x-p_j}{s_j})
\]
and
\begin{equation}\label{llo1}
\rho_4=\frac{1}{2}\min\{\frac{1}{8}\rho_0, \rho_3\}>0,
\end{equation}
where $\rho_0, \rho_3$ are given in \eqref{L1}, \eqref{T2}. Now we choose $\delta_0>0$ small such that
\begin{equation}\label{i4}
\|\gamma_0(p,s)-\sum_{j=1}^{\ell_0}U_0(x-p_j)\|_{H^1}<\frac{\rho_4}{2}
\end{equation}
for all $p=(p_1,\cdots, p_{\ell_0})\in (\frac1\e\mathcal{M})^{\ell_0}$ and $s=(s_1,\cdots ,s_{\ell_0})\in [1-\delta_0, 1+\delta_0]^{\ell_0}.$
We set
\[
\xi(p)= \xi(p_1, \ldots, p_{\ell_0})=\min_{1\le i\neq j\le \ell_0} |p_i-p_j|.
\]
The following lemma was proved in \cite[Lemma 8.1]{BT2}.
\begin{lem}\label{i0}
For all $p=(p_1,\cdots, p_{\ell_0})\in (\frac1\e\mathcal{M})^{\ell_0}$ with $|p_i-p_j|\ge R_1 (i\neq j)$ and $s=(s_1,\cdots ,s_{\ell_0})\in [1-\delta_0, 1+\delta_0]^{\ell_0}$, there exist constants $C_1^\prime, C_2^\prime>0$ such that
\begin{enumerate}[(i)]
  \item $\| \gamma_0(p,s)-U_0(\frac{x-p_j}{s_j})\|_{H^1(B(p_j,\frac{\xi(p)}{3}))}\le C_1^\prime e^{-C_2^\prime \xi(p)} \mbox{ for } j=1,\ldots , \ell_0;$
  \item $\|\gamma_0(p,s)\|_{H^1(\R^N \setminus \cup_{j=1}^{\ell_0} B(p_j,\frac{\xi(p)}{3}))}\le C_1^\prime e^{-C_2^\prime \xi(p)};$
  \item $|L_m(\gamma_0(p,s))-(\sum_{j=1}^{\ell_0}g(s_j))E_m|\le C_1^\prime e^{-C_2^\prime \xi(p)}.$
\end{enumerate}
\end{lem}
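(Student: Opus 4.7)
The plan is to establish all three estimates by a direct use of the exponential decay of $U_0$ from Lemma \ref{LE1}, together with the fact that the scaling parameters $s_j$ lie in the compact interval $[1-\delta_0,1+\delta_0]$, so the decay rate of $U_0((\cdot-p_j)/s_j)$ is preserved up to universal constants $c,C>0$ that do not depend on $s\in[1-\delta_0,1+\delta_0]^{\ell_0}$.

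For (i) and (ii), I would proceed by a geometric separation argument. Writing
\[
\gamma_0(p,s)(x)-U_0\!\left(\tfrac{x-p_j}{s_j}\right)=\sum_{i\neq j}U_0\!\left(\tfrac{x-p_i}{s_i}\right),
\]
on $B(p_j,\xi(p)/3)$ the triangle inequality yields $|x-p_i|\ge |p_i-p_j|-|x-p_j|\ge 2\xi(p)/3$ for $i\neq j$, and hence $|(x-p_i)/s_i|\ge 2\xi(p)/(3(1+\delta_0))$. Lemma \ref{LE1} then produces a pointwise bound of the form $U_0+|\nabla U_0|\le Ce^{-c\xi(p)}e^{-c|x-p_i|/2}$, which I would square and integrate over $B(p_j,\xi(p)/3)$ to obtain (i). For (ii), essentially the same argument works on $\R^N\setminus\cup_j B(p_j,\xi(p)/3)$, where $|x-p_j|\ge\xi(p)/3$ holds for every $j$; each summand then has its $H^1$-mass controlled by $Ce^{-c\xi(p)/6}$ times a convergent tail integral that I would evaluate in polar coordinates centered at $p_j$.

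For (iii), I would use the scaling identity
\[
L_m\!\left(U_0\!\left(\tfrac{\cdot-p_j}{s_j}\right)\right)=g(s_j)E_m
\]
recorded just before Remark \ref{rmk1}, and expand $L_m(\gamma_0(p,s))=\sum_j g(s_j)E_m+(\text{cross terms})$. The quadratic cross terms are integrals of $\nabla U_0((\cdot-p_i)/s_i)\cdot\nabla U_0((\cdot-p_j)/s_j)$ and $U_0((\cdot-p_i)/s_i)\,U_0((\cdot-p_j)/s_j)$ with $i\neq j$, and the exponential decay of $U_0$ combined with $|p_i-p_j|\ge\xi(p)$ renders each of them $O(e^{-c\xi(p)})$. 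The nonlinear cross terms arise from a telescoping bound
\[
\left|F\!\left(\sum_i u_i\right)-\sum_i F(u_i)\right|\le C\sum_{i\neq j}\left(|u_i||u_j|+|u_i|^p|u_j|+|u_i||u_j|^p\right),
\]
which I would derive from (f1)--(f2) and the mean value identity $F(a+b)-F(a)-F(b)=\int_0^1[f(a+tb)-f(tb)]b\,dt$ together with $|f(t)|\le C(|t|+|t|^p)$.

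The main obstacle I expect is the control of the nonlinear cross terms in (iii). Since $f$ is only continuous with polynomial growth from (f2), no Taylor expansion beyond the linearization is available, so the pointwise inequality above must be applied carefully and the resulting mixed products integrated after decomposing $\R^N$ into regions where one $U_0((\cdot-p_i)/s_i)$ dominates, e.g.\ $\{|x-p_i|\le\xi(p)/3\}$ for each $i$, with the complementary region treated as in the proof of (ii). In each such region one factor is bounded pointwise by $Ce^{-c\xi(p)/3}$ uniformly in $s\in[1-\delta_0,1+\delta_0]^{\ell_0}$, while the complementary factor is uniformly bounded in $L^{p+1}$ by Lemma \ref{LE1}; multiplying and summing the resulting estimates then yields the exponential smallness required by (iii).
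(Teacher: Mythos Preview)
Your proposal is correct and follows the standard route for such interaction estimates; the paper itself does not give a proof of this lemma but simply cites \cite[Lemma 8.1]{BT2}, and the argument there is exactly along the lines you describe---exponential decay of $U_0$ from Lemma \ref{LE1} together with the geometric separation $|p_i-p_j|\ge\xi(p)$, and for (iii) the scaling identity $L_m(U_0(\cdot/s))=g(s)E_m$ plus control of the cross terms via the growth bound on $f$. One small remark: under the standing assumptions of the paper (see the discussion after \eqref{Pohozaev}) one actually has $f\in C^1$ with $|f'(t)t|\le\bar C(1+t^p)$, so the mean-value step can be done more directly than your integral identity suggests, but your version works just as well and does not require differentiability of $f$.
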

We set for $r\ge R_1$
\[
A_\e(r)=\Big\{ p=(p_1,\cdots,p_{\ell_0})\in \Big(\frac1\e\mathcal{M}\Big)^{\ell_0}\ \Big|\ |p_i-p_j|\ge r (i\neq j)\Big\}\times [1-\delta_0, 1+\delta_0]^{\ell_0}.
\]
We note that for any $r>0$, $A_\e(r)$ is not empty if $\e>0$ is sufficiently small and
\[
\partial A_\e(r)=\{(p,s)\in A_\e(r)\ | \ |p_i-p_j|=r \mbox{ for some } i\neq j \mbox{ or } s_j\in \{1-\delta_0, 1+\delta_0\} \mbox{ for some } j\}.
\]
The following result was proved in \cite[Proposition 10.1]{BT2}.
\begin{prop}\label{it}
Let $U\in S_m$. Then  for any fixed $\bar{\delta}\in(0,1)$, there exists $R_2\in (R_1,\infty)$ with the following property: for any $r\ge R_2$, there exists a constant $C(r)>0$ such that
  \[
  L_m(\sum_{j=1}^{\ell_0}U(\frac{x-p_j}{s_j}))\le \ell_0 E_m-C(r)
  \]
  for any $(p,s)\in (\R^N)^{\ell_0}\times [1-\bar{\delta},1+\bar{\delta}]^{\ell_0}$ with $\xi(p)=r$.
\end{prop}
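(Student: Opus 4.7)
The plan is to split the $s$-range into a ``far-from-$(1,\ldots,1)$'' regime, in which Lemma \ref{i0}(iii) combined with the strict concavity of $g$ at $1$ already produces a uniform gap, and a ``close-to-$(1,\ldots,1)$'' regime, in which one must capture the strict sign of the leading interaction. Fix a small $\eta_0\in(0,\bar\delta)$ to be chosen later, and set $V_j:=U((\cdot-p_j)/s_j)$ throughout, so that $L_m(\gamma_0(p,s))=\sum_j g(s_j)E_m+I(p,s)$ with $|I(p,s)|\le C_1'e^{-C_2'r}$ by Lemma \ref{i0}(iii).

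\emph{Regime A: $\max_j|s_j-1|\ge\eta_0$.} Since $g(1)=1$ is the strict maximum of $g$ on $(0,\sqrt{N/(N-2)})$ with $g''(1)=-N(N-2)<0$ (Remark \ref{rmk1}), there exists $\delta_1=\delta_1(\eta_0)>0$ with $\sum_j g(s_j)\le \ell_0-\delta_1$. Consequently
\[
L_m(\gamma_0(p,s))\le (\ell_0-\delta_1)E_m+C_1'e^{-C_2'r}\le \ell_0 E_m-\tfrac{\delta_1}{2}E_m
\]
as soon as $r$ is large enough that $C_1'e^{-C_2'r}<\tfrac{\delta_1}{2}E_m$.

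\emph{Regime B: $|s_j-1|<\eta_0$ for every $j$.} Here the decoupled estimate $\sum_j g(s_j)E_m$ can be as large as $\ell_0 E_m$, so the plan is to sharpen Lemma \ref{i0}(iii) by extracting the leading interaction. Using the weak form of $-s_j^2\Delta V_j+mV_j=f(V_j)$ to convert each cross integral $\int(\nabla V_i\cdot\nabla V_j+mV_iV_j)$ into $\int f(V_i)V_j$ (modulo an $O((s_j-1)e^{-\alpha r})$ correction), one identifies
\[
I(p,s)=-\int\Bigl[F(\textstyle\sum_k V_k)-\sum_k F(V_k)-\sum_{i<j}f(V_i)V_j\Bigr]\,dx+o(e^{-\alpha r}).
\]
In the overlap layer all $V_k$ are of order $e^{-\sqrt m\,r/2}$ by the uniform decay estimate in Lemma \ref{LE1}, so the bracketed integrand can be Taylor-expanded using $f(0)=f'(0)=0$ together with the superlinear rate $f(t)=o(t^{1+q_0})$ from (f4); a pointwise comparison (essentially $F(a+b)-F(a)-F(b)>f(a)b$ for small positive $a,b$, which reduces at leading order to the strict convexity of $t^{2+q_0}$) then shows the bracket is nonnegative and strictly positive on a set of positive measure, so $I(p,s)\le -c_0 e^{-\alpha r}$ for some $c_0>0$ depending only on $U$ and on $\ell_0$.

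Setting $C(r):=\min\{\tfrac{\delta_1}{2}E_m,\ \tfrac{c_0}{2}e^{-\alpha r}\}$ and taking $R_2$ large enough that both regime estimates are valid yields the conclusion. The principal obstacle is the sign analysis in Regime B: Lemma \ref{i0}(iii) only controls $|I(p,s)|$, and since (f1)--(f3) do not supply any convexity of $f$, producing a definite negative sign forces one to exploit (f4) together with the ground-state tail estimate from Lemma \ref{LE1} so that the relevant pointwise comparison takes place only near $0$, where the superlinear scale $t^{1+q_0}$ supplies enough convexity to make the interaction attractive.
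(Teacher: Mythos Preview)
Your Regime~A argument is correct; the difficulty is entirely in Regime~B, and there your sketch has a genuine gap.

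First, you misread (f4): it asserts $f(t)=o(t^{1+q_0})$ as $t\to0$, a \emph{flatness} hypothesis, not $f(t)\sim t^{1+q_0}$. There is no leading power $t^{2+q_0}$ whose strict convexity you can borrow. More importantly, the pointwise inequality $F(a+b)-F(a)-F(b)\ge f(a)b$ is false under (f1)--(f4), even for small $a,b>0$. Take $f(t)=-t^2+t^3$ near $0$ (extended to large $t$ so that (f2) and (f3) hold); then
\[
F(a+b)-F(a)-F(b)-f(a)b=\int_0^b\bigl(f(a+t)-f(a)-f(t)\bigr)\,dt=\int_0^b at(-2+3a+3t)\,dt<0
\]
for all small $a,b>0$. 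So your bracket $B:=F(\sum_kV_k)-\sum_kF(V_k)-\sum_{i<j}f(V_i)V_j$ is not pointwise nonnegative. Second, the overlap layer is not where the leading contribution to $\int B$ sits. In the overlap each $V_k=O(e^{-\sqrt m\,r/2})$, and (f4) forces $|B|=o(e^{-(2+q_0)\sqrt m\,r/2})$ there, which is of strictly smaller order than $U(r)\sim r^{-(N-1)/2}e^{-\sqrt m\,r}$. The dominant contribution comes instead from neighbourhoods of the peaks $p_j$ with $j\ge2$, where $V_j=O(1)$ and the remaining $V_i=O(U(r))$; Taylor expansion gives $B=f(V_j)\sum_{i<j}V_i+O(U(r)^{1+q_0})$ there, of order $U(r)$. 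Since $f$ need not be positive on the range of $U$, a pointwise argument fails in that region as well.

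The paper does not prove this proposition but cites \cite[Proposition~10.1]{BT2}. The argument there controls the sign at the level of the \emph{integral}, not pointwise: using $-\Delta U+mU=f(U)$ one has $\int f(V_i)V_j=\int f(V_j)V_i$, and combining the near-peak expansion with sharp two-sided asymptotics $U(x)\sim c_0|x|^{-(N-1)/2}e^{-\sqrt m|x|}$ yields $I(p,s)=-(1+o(1))\sum_{i<j}\int f(V_i)V_j$, with each summand positive of exact order $U(|p_i-p_j|)$. The essential ingredient missing from your sketch is a \emph{lower} bound on $U$ at distance $r$ (Lemma~\ref{LE1} gives only the upper bound); without it you cannot show the true interaction dominates the $o(e^{-\alpha r})$ errors you carry from the $s_j\ne1$ corrections.
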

The following proposition is one of keys of our argument.
\begin{prop}\label{inter}
  There exists $R_3\in (R_2,\infty)$ with the following property: for any $r\ge R_3$, there exists $\nu_1>0$ such that
  \begin{equation}\label{i6}
  \limsup_{\e\rightarrow0}\max_{(p,s)\in \partial A_\e(r)} I_\e(\gamma_0(p,s))\le \ell_0 E_m-\nu_1.
  \end{equation}
\end{prop}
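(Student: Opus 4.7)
The plan is to split the boundary $\partial A_\e(r)$ into two pieces and, on each, to compare $I_\e(\gamma_0(p,s))$ with $L_m(\gamma_0(p,s))$, for which we have good upper bounds from Proposition \ref{it} and Lemma \ref{i0}. The key preliminary observation is the identity
\[
I_\e(\gamma_0(p,s)) - L_m(\gamma_0(p,s)) = \frac12\int_{\R^N}(V(\e x)-m)\gamma_0(p,s)^2\,dx.
\]
Since each $p_j\in \tfrac1\e\mathcal{M}$ satisfies $V(\e p_j)=m$, and $\mathcal{M}$ is compact so that $V$ is uniformly continuous on a compact neighborhood of $\mathcal{M}$, I would split the integral into $\bigcup_j B(p_j,M)$ (on which $|V(\e x)-m|\le \omega_V(\e M)\to 0$) and its complement (on which the $L^2$-mass of $\gamma_0$ is bounded by $C e^{-cM}$ via the uniform exponential decay of $U_0$ from Lemma \ref{LE1}). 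Taking first $M$ large and then $\e\to 0$ yields
\[
I_\e(\gamma_0(p,s)) = L_m(\gamma_0(p,s)) + o(1) \quad \text{as } \e\to 0,
\]
uniformly over $(p,s)\in A_\e(r)$ for any fixed $r\ge R_1$.

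For the first piece of $\partial A_\e(r)$, where $|p_i-p_j|=r$ for some $i\neq j$, we have $\xi(p)=r$ exactly. Applying Proposition \ref{it} with $U=U_0$ and $\bar\delta=\delta_0$ (this choice fixes the constant $R_2$), we obtain $L_m(\gamma_0(p,s))\le \ell_0 E_m - C(r)$ for every $r\ge R_2$, uniformly in the scaling parameters $s\in[1-\delta_0,1+\delta_0]^{\ell_0}$ and in the positions of the remaining $p_k$.

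For the second piece, where $s_j\in\{1-\delta_0,1+\delta_0\}$ for some $j$, the constraint $\xi(p)\ge r\ge R_1$ permits Lemma \ref{i0}(iii), giving
\[
L_m(\gamma_0(p,s)) \le \Bigl(\sum_{k=1}^{\ell_0} g(s_k)\Bigr) E_m + C_1' e^{-C_2' r}.
\]
By Remark \ref{rmk1}(iii), $g$ attains its unique strict maximum $g(1)=1$ at $s=1$, so the number $\kappa:=\min\{1-g(1-\delta_0),\,1-g(1+\delta_0)\}$ is strictly positive. Since at least one $s_j$ sits at an endpoint while the others satisfy $g(s_k)\le 1$, we get $\sum_k g(s_k)\le \ell_0-\kappa$, hence $L_m(\gamma_0(p,s))\le (\ell_0-\kappa) E_m + C_1' e^{-C_2' r}$.

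Finally, I would choose $R_3>R_2$ large enough that $C_1' e^{-C_2' R_3}<\kappa E_m/4$ and, for any $r\ge R_3$, set $\nu_1:=\tfrac12\min\{C(r),\,\kappa E_m/2\}$; combining the two cases with the $o(1)$ estimate of the first paragraph yields \eqref{i6}. The step I expect to be the main obstacle is the uniform comparison between $I_\e$ and $L_m$: the centers $p_j$ can lie anywhere on $\tfrac1\e\mathcal{M}$ and the separations $\xi(p)$ can be arbitrarily large, so the estimate must be truly independent of both. Compactness of $\mathcal{M}$ (giving a uniform modulus of continuity for $V$ near $\mathcal{M}$) together with the uniform exponential decay of $S_m$ from Lemma \ref{LE1} is precisely what makes the tail splitting work with a constant independent of $(p,s)$.
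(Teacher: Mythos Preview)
Your proposal is correct and follows essentially the same route as the paper: establish $I_\e(\gamma_0(p,s))=L_m(\gamma_0(p,s))+o(1)$ uniformly via exponential decay and continuity of $V$ near $\mathcal{M}$, then treat the two boundary pieces exactly as you do using Proposition~\ref{it} and Lemma~\ref{i0}(iii) together with Remark~\ref{rmk1}. The only cosmetic difference is that the paper splits the comparison integral over balls of radius $1/\sqrt{\e}$ rather than a fixed $M$, and leaves the constants $\kappa$ and $\nu_1$ implicit.
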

\begin{proof}
We note that, from Lemma \ref{LE1},
\begin{equation}\label{i5}
\begin{aligned}
I_\e(\gamma_0(p,s))&=L_m(\gamma_0(p,s))+\frac{1}{2}\int_{\R^N}(V(\e x)-m)(\gamma_0(p,s))^2dx\\
&=L_m(\gamma_0(p,s))+\frac{1}{2}\int_{\cup_{j=1}^{\ell_0}B(p_j,\frac{1}{\sqrt{\e}})}(V(\e x)-m)(\gamma_0(p,s))^2dx\\
&\ \ +\frac{1}{2}\int_{\R^N\setminus\cup_{j=1}^{\ell_0}B(p_j,\frac{1}{\sqrt{\e}})}(V(\e x)-m)(\gamma_0(p,s))^2dx\\
                   &=L_m(\gamma_0(p,s))+o(1) \mbox{ as } \e\rightarrow0.
\end{aligned}
\end{equation}
  For $(p,s)\in \partial A_\e(r)$, we have either
  \begin{enumerate}[(a)]
    \item $|p_i-p_j|=r \mbox{ for some } i\neq j$ or
    \item $s_j\in \{1-\delta_0, 1+\delta_0\}$ for some $j$.
  \end{enumerate}
  When (a) takes a place, from  Proposition \ref{it} and \eqref{i5}, \eqref{i6} holds for small $\e$ and large $r$.
  When (b) takes a place, from Lemma \ref{i0} (iii) and \eqref{i5}, we have
  \[
  I_\e(\gamma_0(p,s))\le (\sum_{j=1}^{\ell_0}g(s_j))E_m+C_1^\prime e^{-C_2^\prime \xi(p)}+o(1) \mbox{ as } \e\rightarrow0.
  \]
  Thus, choosing $\e>0$ smaller and $r>0$ larger, from Remark \ref{rmk1}, we get \eqref{i6}.
\end{proof}

Next, we show the following estimate.
\begin{lem}\label{i2}
For any $r\ge R_3$, we have
\begin{equation}\label{pl}
  \lim_{\e\rightarrow0}\max_{(p,s)\in A_\e(r)}I_\e(\gamma_0(p,s))=\ell_0 E_m.
\end{equation}
\end{lem}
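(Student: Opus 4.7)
The plan is to prove matching upper and lower bounds on $\max_{(p,s)\in A_\epsilon(r)} I_\epsilon(\gamma_0(p,s))$, both converging to $\ell_0 E_m$. Two tools will do the heavy lifting: Lemma \ref{i0}(iii), which approximates $L_m(\gamma_0(p,s))$ by $\sum_{j=1}^{\ell_0} g(s_j) E_m$ up to an error exponentially small in $\xi(p)$, and the decomposition already established in \eqref{i5} (in the proof of Proposition \ref{inter}), which reduces $I_\epsilon(\gamma_0)$ to $L_m(\gamma_0)+o_\epsilon(1)$.

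For the lower bound, the plan is to exhibit an explicit admissible sequence. Pick $\ell_0$ distinct points $q_1,\ldots,q_{\ell_0}\in\mathcal{M}$ (available by \eqref{pp2}) and set $p_\epsilon^j = q_j/\epsilon$, $s_\epsilon = (1,\ldots,1)$. Then $p_\epsilon^j\in \epsilon^{-1}\mathcal{M}$ and $\xi(p_\epsilon)=\epsilon^{-1}\min_{i\neq j}|q_i-q_j|\to\infty$, so $(p_\epsilon,s_\epsilon)\in A_\epsilon(r)$ for all sufficiently small $\epsilon$. Since $g(1)=1$ by Remark \ref{rmk1}, Lemma \ref{i0}(iii) yields $L_m(\gamma_0(p_\epsilon,s_\epsilon))\to \ell_0 E_m$, and combining with \eqref{i5} gives $I_\epsilon(\gamma_0(p_\epsilon,s_\epsilon))\to\ell_0 E_m$, so $\liminf_{\epsilon\to 0}\max I_\epsilon(\gamma_0)\ge \ell_0 E_m$.

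For the upper bound, the plan is to apply Lemma \ref{i0}(iii) together with $g(s_j)\le 1$ from Remark \ref{rmk1}(ii) to obtain $L_m(\gamma_0(p,s))\le \big(\sum_j g(s_j)\big)E_m + C_1' e^{-C_2'\xi(p)} \le \ell_0 E_m + C_1' e^{-C_2' r}$ for every $(p,s)\in A_\epsilon(r)$, and then to show that $\frac{1}{2}\int_{\R^N}(V(\epsilon x)-m)\gamma_0(p,s)^2\,dx$ is $o_\epsilon(1)$ uniformly in $(p,s)\in A_\epsilon(r)$. The latter follows by splitting the integral into the near-field $\bigcup_{j=1}^{\ell_0} B(p_j,\epsilon^{-1/2})$ and its complement: on the near-field, $\epsilon p_j\in \mathcal{M}$ and $|\epsilon x-\epsilon p_j|\le\sqrt{\epsilon}$, so uniform continuity of $V$ on a compact neighborhood of $\mathcal{M}$ makes $|V(\epsilon x)-m|$ uniformly small while $\int\gamma_0^2$ stays bounded; on the complement, the exponential decay of $U_0$ from Lemma \ref{LE1} controls $\gamma_0^2$ while $V$ is bounded by (V1). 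This yields $\limsup_{\epsilon\to 0}\max I_\epsilon(\gamma_0)\le \ell_0 E_m + C_1' e^{-C_2' r}$.

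The delicate point, and the main obstacle, is closing the exponential-in-$r$ residue in the upper bound to recover exact equality for each fixed $r$. The natural way is to split $A_\epsilon(r)$ according to a threshold $K\gg r$: for configurations with $\xi(p)\ge K$ the error $C_1'e^{-C_2'\xi(p)}$ is arbitrarily small, while for configurations with $\xi(p)\in[r,K]$ a compactness argument extending Proposition \ref{it} (noting that the set of $\gamma_0(p,s)$ is compact modulo translation once $\xi(p)$ is bounded, and that $L_m(\gamma_0)<\ell_0 E_m$ strictly in this regime) shows $L_m(\gamma_0(p,s))\le \ell_0 E_m - C(r,K)$ for some $C(r,K)>0$ independent of $\epsilon$. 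Letting $K\to\infty$ after $\epsilon\to 0$ shrinks the residue to zero, yielding $\limsup\le\ell_0 E_m$ and hence, together with the lower bound, the claimed limit.
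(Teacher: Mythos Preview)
Your proof is essentially correct and uses the same ingredients as the paper --- Lemma~\ref{i0}(iii), the decomposition \eqref{i5}, and Proposition~\ref{it} --- but the organization differs. The paper lets $(p_\e,s_\e)$ be a maximizer and shows directly that $\xi(p_\e)\to\infty$: if not, along a subsequence $\xi(p_\e)\to\bar r<\infty$, and Proposition~\ref{it} together with \eqref{i5} forces $\liminf_\e I_\e(\gamma_0(p_\e,s_\e))\le \ell_0E_m-C(\bar r)$, contradicting the lower bound $b_\e\ge\ell_0E_m+o(1)$ you have already established. Once $\xi(p_\e)\to\infty$, Lemma~\ref{i0}(iii) and $g\le 1$ give $b_\e\le \ell_0E_m+o(1)$ immediately, with no auxiliary threshold $K$ and no double limit. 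Your two--regime splitting reaches the same conclusion but with more moving parts.

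One point in your sketch is not correct as stated: the set $\{\gamma_0(p,s):\xi(p)\le K\}$ is \emph{not} compact modulo a single translation when $\ell_0\ge 3$, because bounding only the \emph{minimum} pairwise distance still allows the remaining pairwise distances to diverge. Hence the compactness argument you invoke for the regime $\xi(p)\in[r,K]$ does not go through literally. The desired inequality $L_m(\gamma_0(p,s))\le\ell_0E_m-C(r,K)$ is nonetheless true: apply Proposition~\ref{it} at each value $\bar r=\xi(p)\in[r,K]$ and argue (from the proof in \cite{BT2}) that $\inf_{[r,K]}C(\bar r)>0$. More economically, switch to the paper's contradiction on the maximizer, which only needs Proposition~\ref{it} along a single sequence $\xi(p_\e)\to\bar r$ rather than uniformly over an interval.
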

\begin{proof}
Let $b_\e =\max_{(p,s)\in A_\e(r)}I_\e(\gamma_0(p,s))=I_\e(\gamma_0(p_\e,s_\e))$ with $p_\e=(p_1^\e, \ldots, p_{\ell_0}^\e)$. We claim that
\begin{equation}\label{i1}
\min_{1\le i\neq j \le \ell_0}|p_i^\e-p_j^\e|=\infty \mbox{ as } \e\rightarrow0.
\end{equation}
In fact, suppose $\liminf_{\e\rightarrow 0} \min_{1\le i\neq j \le \ell_0}|p_i^\e-p_j^\e|=\bar{r}\in[r,\infty)$. Then, by Proposition \ref{it}, as in \eqref{i5}, we have
\[
\liminf_{\e\rightarrow 0} I_\e(\gamma_0(p_\e, s_\e))\le \ell_0E_m-C(\bar{r}).
\]
On the other hand, from Lemma \ref{i0} (iii), for  $q_\e=(q_1^\e, \ldots, q_{\ell_0}^\e)\in \Big(\frac1\e\mathcal{M}\Big)^{\ell_0}$ with $\min_{i\neq j}|q_i^\e-q_j^\e|\ge \frac{c_1}{\e}$, where  $c_1$ is a positive constant independent of $\e$,  we have
\begin{align*}
b_\e &\ge I_\e(\gamma_0(q_\e, (1,\ldots,1)))\\
&= I_\e(\sum_{j=1}^{\ell_0}U_0(x-q_j^\e))\\
&=L_m(\sum_{j=1}^{\ell_0}U_0(x-q_j^\e))+\frac{1}{2}\int_{\R^N}(V(\e x)-m)(\sum_{j=1}^{\ell_0}U_0(x-q_j^\e))^2\\
&=\ell_0 E_m+o(1) \mbox{ as } \e \rightarrow0,
\end{align*}
which is a contradiction and we have \eqref{i1}.
For $p_1\in \frac1\e\mathcal{M}, s_1\in [1-\delta_0, 1+\delta_0]$,
  \[
  I_\e(U_0(\frac{x-p_1}{s_1}))\rightarrow L_m(U_0(\frac{x}{s_1}))=g(s_1)E_m \mbox{ as } \e\rightarrow 0.
  \]
  We also note that $\max_{s_1\in [1-\delta_0, 1+\delta_0]}g(s_1)E_m=E_m$. Since $U_0(x)$ decays exponentially as $|x|\rightarrow \infty$, from Lemma \ref{i0} (iii) and \eqref{i1}, we can deduce \eqref{pl}.
\end{proof}
We have the following lemma.
\begin{lem}\label{i3}
  \begin{enumerate}[(i)]
    \item Up to a permutation, we have for small $\e>0$ and $r\ge R_1$,
    \[
    |\Upsilon_j(\gamma_0(p,s))-p_j|\le 2R_0 \mbox{ for all } (p,s)\in A_\e(r) \mbox{ and }
    \]
    \item
    \begin{equation*}
    \lim_{r\rightarrow\infty}\limsup_{\e\rightarrow0}\max_{(p,s)\in A_\e(r), j=1,\ldots,\ell_0}|P(\hat{\tau}_{\e,j}(\gamma_0(p,s)))-s_j|=0.
    \end{equation*}
  \end{enumerate}
\end{lem}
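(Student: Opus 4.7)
For part (i), I will verify that $\gamma_0(p,s)\in Z_0$ so that Lemma~\ref{cm}(i) applies directly. Writing
\[ \gamma_0(p,s)=\sum_{j=1}^{\ell_0}U_0(x-p_j)+w,\qquad w(x):=\sum_{j=1}^{\ell_0}\bigl[U_0(\tfrac{x-p_j}{s_j})-U_0(x-p_j)\bigr], \]
the choice of $\delta_0$ in \eqref{i4} gives $\|w\|_{H^1}<\rho_4/2<\rho_0/8$. Since $p_j\in\tfrac1\e\mathcal{M}\subset\tfrac1\e N_\beta(\mathcal{M})$, $U_0\in S_m$, and $|p_i-p_j|\ge r\ge R_1\ge 12R_0$, we conclude $\gamma_0(p,s)\in Z(\rho_0/8,12R_0)=Z_0$, and Lemma~\ref{cm}(i) gives (i). Moreover, Lemma~\ref{cm}(ii) yields $\Theta(\gamma_0(p,s))\ge \xi(p)-4R_0\ge r-4R_0\to\infty$ as $r\to\infty$.

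For part (ii), abbreviate $u:=\gamma_0(p,s)$, $\Theta:=\Theta(u)$, and $\tilde U^{(j)}(x):=U_0(\tfrac{x-p_j}{s_j})$, noting $P(\tilde U^{(j)})=s_j$ by the scaling identity recalled in the paper. The core claim is
\[ \|\hat\tau_{\e,j}(u)-\tilde U^{(j)}\|_{H^1(\R^N)}\longrightarrow 0\qquad\text{uniformly in }\e\text{ and }(p,s)\in A_\e(r),\text{ as }r\to\infty. \]
Granting this, observe that $\|\nabla\tilde U^{(j)}\|_{L^2}^2$ and $N\int[F(\tilde U^{(j)})-\tfrac m2(\tilde U^{(j)})^2]\,dx$ are bounded above and bounded away from $0$ uniformly for $s_j\in[1-\delta_0,1+\delta_0]$ (via Pohozaev for $U_0$ and scaling), so $P$ is continuous at each $\tilde U^{(j)}$ with a modulus of continuity uniform in $s_j$. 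The uniform $H^1$ convergence therefore gives $\max_{j,(p,s)}|P(\hat\tau_{\e,j}(u))-s_j|\to 0$.

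To establish the $H^1$ convergence I decompose $\R^N$ according to the three regimes of $\hat\tau_{\e,j}(u)$. On $B(\Upsilon_j,\Theta/6)$ one has $\chi_{\Theta/3}\equiv 1$ and $\tau_\e(u)=u$, hence $\hat\tau_{\e,j}(u)-\tilde U^{(j)}=\sum_{i\ne j}\tilde U^{(i)}$; after the relabeling from (i), each summand is centered at $p_i$ at distance at least $\xi(p)-2R_0-\Theta/6\gtrsim r$ from $B(\Upsilon_j,\Theta/6)$, which by the uniform exponential decay in Lemma~\ref{LE1} makes this contribution exponentially small. On the annulus $B(\Upsilon_j,\Theta/3+1)\setminus B(\Upsilon_j,\Theta/6)$, Lemma~\ref{Tail}(ii) bounds $\|\tau_\e(u)\|_{H^1(D(\Theta/6,\Upsilon(u)))}$ by $A_1\|u\|_{H^1(\cup_i(B(\Upsilon_i,\Theta/6)\setminus\overline B(\Upsilon_i,\Theta/6-1)))}$, and those narrow annuli lie at distance $\gtrsim\Theta$ from every $p_i$, so Lemma~\ref{LE1} again forces this term to vanish; the derivative contribution $|\chi_{\Theta/3}'|\le 2$ is absorbed using the pointwise bound \eqref{T5}. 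Finally, outside $B(\Upsilon_j,\Theta/3+1)$, $\hat\tau_{\e,j}(u)\equiv 0$ while $\tilde U^{(j)}$ decays exponentially by Lemma~\ref{LE1}.

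The main technical point is uniformity: the constants in Lemma~\ref{Tail} and the exponential decay in Lemma~\ref{LE1} are independent of $\e$ and of $(p,s)\in A_\e(r)$, so combining the three regime estimates yields the claimed uniform $H^1$ convergence as $r\to\infty$. Continuity of $P$ at $\tilde U^{(j)}$ then completes the proof of (ii).
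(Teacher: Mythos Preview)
Your proof is correct and follows essentially the same strategy as the paper. For part (i) you argue exactly as the paper does, invoking \eqref{i4} to place $\gamma_0(p,s)$ in $Z_0$ and then applying Lemma~\ref{cm}(i). For part (ii) both arguments reduce to showing $\|\hat\tau_{\e,j}(\gamma_0(p,s))-U_0(\tfrac{\cdot-p_j}{s_j})\|_{H^1}\to 0$ uniformly and then using continuity of $P$; the only difference is packaging. The paper reaches this via Lemma~\ref{T6}(ii) together with $\|\tau_\e(u)-u\|_{H^1}\to 0$ and $\|\chi_{\Theta/3}(|\cdot-\Upsilon_j|)u-\tilde U^{(j)}\|_{H^1}\to 0$, whereas you bypass Lemma~\ref{T6}(ii) and estimate $\hat\tau_{\e,j}(u)-\tilde U^{(j)}$ directly by splitting into the three regions and appealing to Lemma~\ref{Tail}(ii) and Lemma~\ref{LE1}. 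A couple of your distance estimates are stated a bit loosely (the distances in Regions~1 and~2 are of order $\Theta/6$ rather than $\Theta$ or $r$), but this does not affect the exponential smallness and the argument goes through.
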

\begin{proof}
  (i) follows from Lemma \ref{cm} (i) and \eqref{i4}.\\
  (ii) By Lemma \ref{LE1}, we have
  \[
  \lim_{r\rightarrow \infty}\limsup_{\e\rightarrow0}\max_{(p,s)\in A_\e(r)}\|\tau_\e(\gamma_0(p,s))-\gamma_0(p,s)\|_{H^1}=0.
  \]
  Since $\gamma_0(p,s)\in Z(\frac{1}{2}\rho_4, r)$ for all $(p,s)\in A_\e(r)$, by (ii) of Lemma \ref{T6}, we have
  \[
  \lim_{r\rightarrow \infty}\limsup_{\e\rightarrow0}\max_{(p,s)\in A_\e(r)} \Big\|\sum_{j=1}^{\ell_0}\hat{\tau}_{\e,j}(\gamma_0(p,s))-\tau_\e(\gamma_0(p,s))\Big\|_{H^1}=0.
  \]
  Then we have
  \[
  \lim_{r\rightarrow \infty}\limsup_{\e\rightarrow0}\max_{(p,s)\in A_\e(r)} \Big\|\sum_{j=1}^{\ell_0}\hat{\tau}_{\e,j}(\gamma_0(p,s))-\gamma_0(p,s)\Big\|_{H^1}=0.
  \]
  Since
  \[
  \lim_{r\rightarrow \infty}\limsup_{\e\rightarrow0}\max_{(p,s)\in A_\e(r), j=1,\ldots,\ell_0} \Big\|\chi_{\Theta(\gamma_0(p,s))/3}(|\cdot-\Upsilon_j(\gamma_0(p,s))|)\gamma_0(p,s)-U_0(\frac{\cdot-p_j}{s_j})\Big\|_{H^1}=0,
  \]
 up to a permutation, we have (ii).
\end{proof}
By Lemma \ref{i3}, we see that there exists $\e_0>0$ with the following property: for $0<\e<\e_0$, up to a permutation,
  \begin{equation}\label{h2}
  \max_{(p,s)\in A_\e(kR_3), j=1,\ldots, \ell_0}|\Upsilon_j(\gamma_0(p,s))-p_j|\le 2R_0
  \end{equation}
  and
\begin{equation}\label{h1}
  \max_{(p,s)\in A_\e(kR_3), j=1,\ldots, \ell_0}|P(\hat{\tau}_{\e, j}(\gamma_0(p,s)))-s_j|<\frac{\delta_0}{8},
  \end{equation}
where $k$ is a large constant with $k\ge 3$.
By \eqref{h2}, \eqref{h1} and the fact that $\{\gamma_0(p,s)\ |\ (p,s)\in (\frac{1}{\e}\mathcal{M})^{\ell_0}\times [1-\delta_0,1+\delta_0]^{\ell_0}\}$ is compact, we extend the center of mass $\Upsilon_j$ and $P\circ\hat{\tau}_{\e,j}$ on $ \{\gamma_0(p,s)\ |\ (p,s)\in A_\e(kR_3)\}$ to functions $\hat{\Upsilon}_j$ and $\hat{P}_j$ on
\begin{equation}\label{ni1}
 \{\gamma_0(p,s)\ |\ (p,s)\in \Big(\frac{1}{\e}\mathcal{M}\Big)^{\ell_0}\times [1-\delta_0,1+\delta_0]^{\ell_0},\}
\end{equation}
respectively, so that,  up to a permutation,
\begin{equation}\label{s1}
\begin{aligned}
  &\max_{(p,s)\in (\frac{1}{\e}\mathcal{M})^{\ell_0}\times [1-\delta_0,1+\delta_0]^{\ell_0}, j=1,\ldots, \ell_0}|\hat{\Upsilon}_j(\gamma_0(p,s))-p_j|\le 3R_0,\\
&\ \ \max_{(p,s)\in (\frac{1}{\e}\mathcal{M})^{\ell_0}\times [1-\delta_0,1+\delta_0]^{\ell_0}, j=1,\ldots, \ell_0}|\hat{P}_j(\gamma_0(p,s))-s_j|\le \frac{\delta_0}{4},
\end{aligned}
\end{equation}
\begin{equation}\label{nmi1}
\begin{aligned}
&\hat{\Upsilon}_j(\gamma_0(p,s))=p_j \mbox{ for } (p,s)\in \Big(\frac{1}{\e}\mathcal{M}\Big)^{\ell_0}\times [1-\delta_0,1+\delta_0]^{\ell_0} \mbox{ with } \min_{1\le i\neq j\le \ell_0}|p_i-p_j|\le \frac{kR_3}{2} \ \ \mbox{ and}\\
&\hat{P}_j(\gamma_0(p,s))=s_j \mbox{ for } (p,s)\in \Big(\frac{1}{\e}\mathcal{M}\Big)^{\ell_0}\times [1-\delta_0,1+\delta_0]^{\ell_0} \mbox{ with } \min_{1\le i\neq j\le \ell_0}|p_i-p_j|\le \frac{kR_3}{2}.
\end{aligned}
\end{equation}
We remark that there exists $R_4\ge kR_3$ such that
\begin{equation}\label{mo0}
\{\gamma_0(p,s)| (p,s)\notin A_\e(kR_3)\}\cap Z(\rho_4,R_4)=\emptyset.
\end{equation}
Indeed, choosing $R_4\ge kR_3$ large, for $\gamma_0(p,s)=\sum_{i=1}^{\ell_0}U_0(\cdot-p_i)+w$ satisfying $(p,s)\notin A_\e(kR_3)$ and $\|w\|_{H^1}< \frac{\rho_4}{2}$ and $u=\sum_{j=1}^{\ell_0} U_j(\cdot-y_j)+\bar{w}\in Z(\rho_4,R_4)$, we may assume that there exists $y_1\in \frac{1}{\e}N_\beta(\mathcal{M})$ such that $B(y_1,2R_0)\cap \{p_1,\cdots, p_{\ell_0}\}=\emptyset.$ Then, by \eqref{L},
\begin{align*}
\|\gamma_0(p,s)-u\|_{H^1}&=\|\sum_{i=1}^{\ell_0}U_0(\cdot-p_i)-\sum_{j=1}^{\ell_0} U_j(\cdot-y_j)+w-\bar{w}\|_{H^1}\\
&\ge\|U_1(\cdot-y_1)\|_{H^1(B(y_1,R_0))}-\sum_{i=1}^{\ell_0}\|U_0(\cdot-p_i)\|_{H^1(B(y_1,R_0))}\\
&-\sum_{j=2}^{\ell_0}\|U_j(\cdot-y_j)\|_{H^1(B(y_1,R_0))}-\frac{3}{2}\rho_4\\
&\ge \frac{3}{4}\rho_0-\frac{1}{4}\rho_0-\frac{3}{32}\rho_0>0.
\end{align*}
We set 
\[
 (\tilde{\Upsilon}_1(u),\cdots,\tilde{\Upsilon}_{\ell_0}(u))=\begin{cases}(\omega(\e\hat{\Upsilon}_1(u))/\e,\cdots, \omega(\e\hat{\Upsilon}_{\ell_0}(u)))/\e\\
\ \    \mbox{ if } u\in \{\gamma_0(p,s)\ |\ (p,s)\in \Big(\frac{1}{\e}\mathcal{M}\Big)^{\ell_0}\times [1-\delta_0,1+\delta_0]^{\ell_0}\},\\
 (\omega(\e \Upsilon_1(u))/\e,\cdots, \omega(\e \Upsilon_{\ell_0}(u)))/\e\\ 
\    \ \mbox{ if } u\in Z(\rho_4,R_4)\setminus \{\gamma_0(p,s)\ |\ (p,s)\in \Big(\frac{1}{\e}\mathcal{M}\Big)^{\ell_0}\times [1-\delta_0,1+\delta_0]^{\ell_0}\}\end{cases}
\]
 and 
\[
(\tilde{P}_1(u),\cdots,\tilde{P}_{\ell_0}(u))=\begin{cases}(H\circ \hat{P}_1(u),\cdots, H\circ \hat{P}_{\ell_0}(u))\\ \ \        \mbox{ if } u\in \{\gamma_0(p,s)\ |\ (p,s)\in \Big(\frac{1}{\e}\mathcal{M}\Big)^{\ell_0}\times [1-\delta_0,1+\delta_0]^{\ell_0}\},\\
( H\circ P\circ\hat{\tau}_{\e,1}(u),\cdots, H\circ P\circ\hat{\tau}_{\e,\ell_0}(u)) \\
\ \ \mbox{ if }  u\in Z(\rho_4,R_4)\setminus \{\gamma_0(p,s)  |  (p,s)\in \Big(\frac{1}{\e}\mathcal{M}\Big)^{\ell_0}\times [1-\delta_0,1+\delta_0]^{\ell_0}\},\end{cases}
\] where $\omega$ is given in \eqref{h3} and $H: \R\rightarrow[1- \delta_0,1+ \delta_0]$ is defined by
  \begin{equation}\label{mi1i}
 H(t)=\begin{cases}
                   1+\delta_0 & \mbox{if } t\ge 1+\frac34\delta_0, \\
                   1- \delta_0 & \mbox{if } t\le 1-\frac34\delta_0, \\
                   \frac43(t-1)+1 & \mbox{otherwise}.
                 \end{cases}
  \end{equation}

We remark that, for $\gamma_0(p,s)=\gamma_0(p_1,\cdots, p_{\ell_0},s_1,\cdots, s_{\ell_0})$, by defining $B_j=B(p_j,\frac52R_0)$ in \eqref{ji2} for $j=1,\cdots, \ell_0$,  we can define a map
\begin{align}\label{ni2}
(\tilde{\Upsilon}_1(\gamma_0(p,s)),&\ldots, \tilde{\Upsilon}_{\ell_0}(\gamma_0(p,s)), \tilde{P}_1(\gamma_0(p,s)), \ldots, \tilde{P}_{\ell_0}(\gamma_0(p,s))):\nonumber\\
&\Big(\frac{1}{\e}\mathcal{M}\Big)^{\ell_0}\times [1-\delta_0,1+\delta_0]^{\ell_0}\rightarrow \Big(\frac{1}{\e}\mathcal{M}\Big)^{\ell_0}\times [1-\delta_0,1+\delta_0]^{\ell_0}
\end{align}
 by $\tilde{\Upsilon}_i(\gamma_0(p,s))=\omega(\e\hat{\Upsilon}_i(\gamma_0(p,s)))/\e$ and $\tilde{P}_i(\gamma_0(p,s))=H\circ \hat{P}_i(\gamma_0(p,s))$, where $\hat{\Upsilon}_{i}$ and $\hat{\mathcal{P}}_{i}$ satisfy the properties \eqref{s1} and \eqref{nmi1} without a permutation. We see  that 
the map \eqref{ni2}  is homotopic to the identity map.  Indeed, we note first that, by \eqref{s1}, for all $j=1,\ldots, \ell_0$ and $0\le t\le 1$,
\[
\e |(1-t)\hat{\Upsilon}_{j}(\gamma_0(p,s))+tp_j-p_j|\le \e(1-t)3R_0.
\]
Then the homotopy is given by $\zeta:[0,1]\times\Big(\frac{1}{\e}\mathcal{M}\Big)^{\ell_0}\times [1-\delta_0,1+\delta_0]^{\ell_0}\rightarrow \Big(\frac{1}{\e}\mathcal{M}\Big)^{\ell_0}\times [1-\delta_0,1+\delta_0]^{\ell_0}$, where
\begin{equation}\label{mi1}
 \begin{aligned}
& \zeta(t,p,s)=\zeta(t,p_1,\cdots, p_{\ell_0},s_1,\cdots, s_{\ell_0})\\
&=\begin{cases}(\omega(\e((1-t)\hat{\Upsilon}_1(\gamma_0(p,s))+tp_1))/\e,\ldots,\omega(\e((1-t)\hat{\Upsilon}_{\ell_0}(\gamma_0(p,s))+tp_{\ell_0}))/\e,\\
\ \ (1-t)\tilde{P}_1(\gamma_0(p,s))+t s_1,\ldots,(1-t)\tilde{P}_{\ell_0}(\gamma_0(p,s))+t s_{\ell_0}).
\end{cases}
 \end{aligned}
\end{equation}
\subsection{An intersection property}
We give an intersection property, whch will be used to show an energy estimate from below $\ell_0 E_m$ for a deformation of the initial path $\gamma_0(p,s)$.

\begin{prop}\label{in0}
  For $r>R_4$, there exists $\bar{\e}_0 \in(0,\e_0)$ such that for $0<\e<\bar{\e}_0$ and any continuous map
  \begin{align*}
  h_\e : &[0,1]\times \Big(\frac{1}{\e}\mathcal{M}\Big)^{\ell_0}\times[1-\delta_0,1+\delta_0]^{\ell_0}\\
  &\rightarrow Z(\rho_4, R_4)\cup\{\gamma_0(p,s)\ |\ (p,s)\in \Big(\frac{1}{\e}\mathcal{M}\Big)^{\ell_0}\times[1-\delta_0, 1+\delta_0]^{\ell_0}\}
  \end{align*}
   satisfying
  \begin{equation}\label{in1}
    h_\e(0, p, s)=\gamma_0(p,s) \mbox{ for all } ( p, s)\in \Big(\frac{1}{\e}\mathcal{M}\Big)^{\ell_0}\times[1-\delta_0,1+\delta_0]^{\ell_0},
  \end{equation}
\begin{equation}\label{ji4}
h_\e(\tau+\sigma,p,s)=h_\e(\sigma,p,s) \mbox{ for all } \tau\ge 0 \mbox{ and all }(p,s)\in\Big(\frac{1}{\e}\mathcal{M}\Big)^{\ell_0}\times[1-\delta_0,1+\delta_0]^{\ell_0},
\end{equation}
  where $h_\e(\sigma,p,s) =\gamma_0(\bar{p},\bar{s})$ for some $\sigma\in[0,1]$ and for some  $(\bar{p},\bar{s})\in\{(p,s)\in \Big(\frac{1}{\e}\mathcal{M}\Big)^{\ell_0}\times[1-\delta_0,1+\delta_0]^{\ell_0}\ |\ |p_i-p_j|\le r  \mbox{ for some } 1\le i\neq j\le \ell_0 \mbox{ or } s_j\in\{1-\delta_0,1+\delta_0\} \mbox{ for some } j\}$, and 
\begin{equation}\label{ji1}
\Big\|\frac{d h_\e}{d\tau}(\tau,p,s)\Big\|_{H^1}\le 1 \mbox{ for all }(\tau,p,s)\in [0,1]\times \Big(\frac{1}{\e}\mathcal{M}\Big)^{\ell_0}\times[1-\delta_0,1+\delta_0]^{\ell_0},
\end{equation}
 there exists $(p^\e, s^\e)\in A_\e(r)$ such that $u_\e=h_\e(1, p^\e, s^\e)\in Z(\rho_4, R_4)$ satisfies
  \begin{equation}\label{in3}
    P(\hat{\tau}_{\e,j}(u_\e))=1 \mbox{ for } j=1, \ldots, \ell_0,
  \end{equation}
   \begin{equation}\label{in4}
    \frac{\beta_1}{\e}\le \Theta(u_\e)\le \frac{7\beta_1}{\e}.
  \end{equation}
\end{prop}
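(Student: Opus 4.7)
The plan is to construct a self-map $F_\epsilon$ of $(\tfrac{1}{\epsilon}\mathcal{M})^{\ell_0}\times[1-\delta_0,1+\delta_0]^{\ell_0}$ that is homotopic to the identity and then invoke Lemma~\ref{y2} (under $(M1)$) or Lemma~\ref{y1} (under $(M2)$) to hit a carefully chosen target point. Pick $\bar q=(\bar q_1,\ldots,\bar q_{\ell_0})$ with $\bar q_j\in c$ (resp.\ $\bar q_j\in\mathcal{M}$) satisfying $\min_{i\neq j}|\bar q_i-\bar q_j|=4\beta_1$ as in \eqref{pp2}, and aim for the target $(\bar q/\epsilon,\mathbf{1}):=(\bar q_1/\epsilon,\ldots,\bar q_{\ell_0}/\epsilon,1,\ldots,1)$. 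Set
\[
F_\epsilon(p,s)=\bigl(\tilde\Upsilon_1,\ldots,\tilde\Upsilon_{\ell_0},\tilde P_1,\ldots,\tilde P_{\ell_0}\bigr)\bigl(h_\epsilon(1,p,s)\bigr),
\]
which is well defined since by hypothesis $h_\epsilon(1,p,s)\in Z(\rho_4,R_4)\cup\{\gamma_0(p',s')\}$, the common domain of the extensions $\tilde\Upsilon_j,\tilde P_j$.

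The homotopy from $F_\epsilon$ to the identity is the concatenation of two pieces. First, running $\tau$ from $1$ down to $0$ in $h_\epsilon(\tau,\cdot,\cdot)$ and post-composing with $(\tilde\Upsilon,\tilde P)$ yields a homotopy from $F_\epsilon$ to $(\tilde\Upsilon,\tilde P)\circ\gamma_0$; continuity uses that $h_\epsilon(\tau,p,s)$ stays in the domain of $(\tilde\Upsilon,\tilde P)$ for all $\tau$, together with the uniform Lipschitz bound \eqref{ji1}. Second, the explicit $\zeta$ of \eqref{mi1} linearly interpolates $(\tilde\Upsilon,\tilde P)\circ\gamma_0$ to the identity, a construction made valid by \eqref{s1} together with \eqref{pil}, which ensure $\epsilon\bigl((1-t)\hat\Upsilon_j(\gamma_0(p,s))+tp_j\bigr)\in N_{2\beta}(\mathcal{M})$ for all $t\in[0,1]$ and $\epsilon$ small.

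With $F_\epsilon$ identified as a self-map homotopic to the identity, the topological lemmas apply. On the cube boundary $\partial[1-\delta_0,1+\delta_0]^{\ell_0}$, the hypothesis \eqref{ji4} forces $h_\epsilon(1,p,s)=\gamma_0(\bar p,\bar s)$ for some ``bad'' $(\bar p,\bar s)$, and combining this with \eqref{nmi1} yields $\tilde P_j\circ h_\epsilon(1,p,s)=H(s_j)=s_j$ whenever $s_j\in\{1\pm\delta_0\}$, so $F_\epsilon$ preserves each boundary face of the cube in the corresponding coordinate. A collapse of each interval factor to a circle (or, equivalently, a relative-degree argument with respect to the cube boundary) then reduces the problem to a self-map on a closed polyhedron or manifold still homotopic to the identity. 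Lemma~\ref{y2} applied under $(M1)$ to the $\ell_0$-fold product cycle of $\bar c$ in $(\mathcal{M})^{\ell_0}$, or Lemma~\ref{y1} applied under $(M2)$ to the product manifold $(\mathcal{M})^{\ell_0}$, then produces $(p^\epsilon,s^\epsilon)\in A_\epsilon(r)$ with $F_\epsilon(p^\epsilon,s^\epsilon)=(\bar q/\epsilon,\mathbf{1})$; the separation $4\beta_1/\epsilon$ between the $\bar q_j/\epsilon$ rules out the ``bad'' $p$-region for small $\epsilon$.

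Setting $u_\epsilon=h_\epsilon(1,p^\epsilon,s^\epsilon)$, condition \eqref{in3} follows from $\tilde P_j(u_\epsilon)=H(P(\hat\tau_{\epsilon,j}(u_\epsilon)))=1$, which forces $P(\hat\tau_{\epsilon,j}(u_\epsilon))=1$ since $H^{-1}(1)=\{1\}$, and condition \eqref{in4} follows from $\tilde\Upsilon_j(u_\epsilon)=\bar q_j/\epsilon$ together with \eqref{pil}: one gets $|\epsilon\Upsilon_j(u_\epsilon)-\bar q_j|<\beta_1$, hence $\epsilon\Theta(u_\epsilon)\in[2\beta_1,6\beta_1]\subset[\beta_1,7\beta_1]$. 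The alternative $u_\epsilon\in\{\gamma_0(p',s')\}\setminus Z(\rho_4,R_4)$ is ruled out by \eqref{mo0}, again thanks to the $\bar q$-separation. The main obstacle will be the topological reduction: Lemmas~\ref{y2}--\ref{y1} are phrased for self-maps of a polyhedron or closed manifold, whereas $[1-\delta_0,1+\delta_0]^{\ell_0}$ carries a non-trivial boundary, so converting the boundary-preservation of $F_\epsilon$ (established via \eqref{ji4} and \eqref{nmi1}) into a clean cycle/degree statement on a product with a cube factor is the delicate technical step.
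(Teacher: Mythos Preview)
Your outline matches the paper's proof almost exactly: form $\Psi_\e(\tau,p,s)=(\tilde\Upsilon,\tilde P)(h_\e(\tau,p,s))$, homotope $\Psi_\e(1,\cdot)$ to the identity via $\tau\to 0$ and then $\zeta$, use the boundary behaviour coming from \eqref{ji4}, hit the target $(\bar q/\e,\mathbf 1)$ via the topological lemmas, and read off \eqref{in3}--\eqref{in4}. The paper also spends a paragraph on the continuity of $\Psi_\e$ (the centers $\Upsilon_j$ are only defined up to permutation, so one must track the labels along $\tau$ using \eqref{ji1} and \eqref{ji4}; this is \eqref{mo1}--\eqref{mo2}), which you allude to but do not spell out.

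The one substantive difference is the step you yourself flag as the main obstacle. Your ``collapse each interval factor to a circle'' does not work as stated: the quotient is a torus $(S^1)^{\ell_0}$, and $F_\e$ does not descend to it, because flipping a single $s_j$ between $1-\delta_0$ and $1+\delta_0$ changes $\gamma_0(p,s)$ and hence alters the \emph{other} coordinates $\tilde\Upsilon_k,\tilde P_k$ as well. The paper instead \emph{doubles} the cube, using $I^{\ell_0}\cup_{\partial I^{\ell_0}} I^{\ell_0}\cong S^{\ell_0}$. Because $\Psi_\e(\tau,\cdot)$ and $\zeta(t,\cdot)$ send $(\tfrac1\e\mathcal M)^{\ell_0}\times\partial I^{\ell_0}$ into itself for every $\tau,t$ (this is \eqref{ccc1}--\eqref{ccc0}), $\Psi_\e(1,\cdot)$ extends to a self-map $\tilde f$ of $(\tfrac1\e\mathcal M)^{\ell_0}\times S^{\ell_0}$ homotopic to the identity, and Lemmas~\ref{y2}--\ref{y1} are then applied to $\mathcal M^{\ell_0}\times S^{\ell_0}$ (with the product cycle $[\bar c]^{\times\ell_0}\times[S^{\ell_0}]$ under $(M1)$), not to $\mathcal M^{\ell_0}$ alone. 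Your relative-degree alternative could in principle be made to work, but the doubling to a sphere is what the paper actually does and is the cleanest way to bring the cube factor into the framework of Lemmas~\ref{y2}--\ref{y1}.
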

\begin{proof}
Choosing the order of balls in \eqref{ji2} along the path $\tau\mapsto h_\e(\tau,p,s)$, we can define a continuous map
\begin{equation}\label{mo3}
\Psi_\e: [0,1]\times\Big(\frac{1}{\e}\mathcal{M}\Big)^{\ell_0}\times[1-\delta_0,1+\delta_0]^{\ell_0}\rightarrow \Big(\frac{1}{\e}\mathcal{M}\Big)^{\ell_0}\times[1-\delta_0,1+\delta_0]^{\ell_0}
\end{equation}
by $\Psi_\e (\tau,p,s)=(\tilde{\Upsilon}_1( h_\e (\tau,p,s)),\ldots, \tilde{\Upsilon}_{\ell_0}( h_\e (\tau,p,s)), \tilde{P}_1( h_\e (\tau,p,s)), \ldots, \tilde{P}_{\ell_0}( h_\e (\tau,p,s))),$ where $\Psi_\e( 0,p,s)=(\tilde{\Upsilon}_1(\gamma_0(p,s)),\ldots, \tilde{\Upsilon}_{\ell_0}(\gamma_0(p,s)), \tilde{P}_1(\gamma_0(p,s)), \ldots, \tilde{P}_{\ell_0}(\gamma_0(p,s)))$ is defined in \eqref{ni2}. Indeed, for $h_\e (\tau,p,s)   \in Z(\rho_4,R_4)\cup \{\gamma_0(p,s)\ |\ (p,s)\in A_\e(r)\}$, we denote 
\[
h_\e (\tau,p,s)=\sum_{j=1}^{\ell_0} U_j^\tau(\cdot-y_j^\tau)+w^\tau,
\] where $y_j^\tau\in \frac{1}{\e}N_\beta(\mathcal{M})$, $U_j^\tau\in S_m, j=1,\cdots, \ell_0$, $|y_j^\tau-y_{j^\prime}^\tau|\ge R_4>36R_0$ for $1\le j \neq j^\prime\le \ell_0$ and $\|w^\tau\|_{H^1}<\rho_4$. If $h_\e (\tau,p,s)\in  Z(\rho_4,R_4)\cup \{\gamma_0(p,s)\ |\ (p,s)\in A_\e(r)\}$ for $\tau\in[\tau_1,\tau_2]$, where $\tau_1, \tau_2\in [0,1]$ and $|\tau_1-\tau_2|<\frac{\rho_0}{8}$, then we see that for a permutation $\sigma$ of $(1, 2,\cdots, \ell_0)$, 
\begin{equation}\label{mo1}
|y_i^{\tau_1}-y_{\sigma(i)}^{\tau_2}|<2R_0 \mbox{ for all } i=1,\cdots, \ell_0.
\end{equation} Contrary to our claim \eqref{mo1}, suppose that there exists $y_1^{\tau_1}\in \frac{1}{\e}N_\beta(\mathcal{M})$ such that $|y_1^{\tau_1}-y_j^{\tau_2}|\ge 2R_0$ for all $j=1, \cdots, \ell_0$. Then, by \eqref{L}, \eqref{llo1} and \eqref{ji1}, we have
\begin{align*}
|\tau_1-\tau_2| &\ge\|h_\e (\tau_1,p,s)-h_\e (\tau_2,p,s)\|_{H^1}\\
&\ge\|\sum_{j=1}^{\ell_0} U_j^{\tau_1}(\cdot-y_j^{\tau_1})+w^{\tau_1} -\Big(\sum_{j=1}^{\ell_0} U_j^{\tau_2}(\cdot-y_j^{\tau_2})+w^{\tau_2}\Big)\|_{H^1(B(y_1^{\tau_1},R_0))}\\
&\ge \|  U_1^{\tau_1}(\cdot-y_j^{\tau_1})\|_{H^1(B(y_1^{\tau_1},R_0))}-\sum_{j=2}^{\ell_0}\| U_j^{\tau_1}(\cdot-y_j^{\tau_1})\|_{H^1(B(y_1^{\tau_1},R_0))}\\
&-\sum_{j=1}^{\ell_0}\| U_j^{\tau_2}(\cdot-y_j^{\tau_2})\|_{H^1(B(y_1^{\tau_1},R_0))}-\|w^{\tau_1}\|_{H^1}-\|w^{\tau_2}\|_{H^1}\\
&>\frac34 \rho_0-\frac18 \rho_0-\frac18 \rho_0-\frac{1}{16}\rho_0-\frac{1}{16}\rho_0=\frac38 \rho_0,
\end{align*}
which is a contradiction.
On the other hand, by \eqref{in1} and \eqref{ji4}, if $h_\e (\bar{\tau},p,s)=\gamma_0(\bar{p},\bar{s})$ for some $\bar{\tau}\in[0,1]$, $(p,s)\in A_\e(r)$ and $(\bar{p},\bar{s})\in \Big(\frac{1}{\e}\mathcal{M}\Big)^{\ell_0}\times[1-\delta_0,1+\delta_0]^{\ell_0}$ satisfying $\xi(\bar{p})=\min_{1\le i\neq j\le\ell_0}|\bar{p}_i-\bar{p}_j|=r$ or $\bar{s}\in \partial([1-\delta_0,1+\delta_0]^{\ell_0})$, then 
\begin{equation}\label{mo2}
h_\e ({\tau},p,s)=\gamma_0(\bar{p},\bar{s}) \mbox{ for all }\tau\ge \bar{\tau}.
\end{equation}
Therefore, by \eqref{mo0}, \eqref{mo1} and \eqref{mo2}, choosing the order of balls in \eqref{ji2} along the path $\tau\mapsto h_\e(\tau,p,s)$, we can prove the claim \eqref{mo3}.

  By \eqref{pp2}, we  fix
\begin{align*}
&q=(q_1,\ldots, q_{\ell_0})\in \begin{cases}
                            (c)^{\ell_0}  & \mbox{if } (M1)\mbox{ holds}, \\                   
                            (\mathcal{M})^{\ell_0} & \mbox{if } (M2) \mbox{ holds},
                          \end{cases}
\end{align*}
with $\xi(q_1,\ldots, q_{\ell_0})=\min_{1\le i\neq j\le \ell_0}|q_i-q_j|=4\beta_1$, where the set $c$ is defined in \eqref{h4}, and denote $Q_\e=(q_1/\e, \ldots, q_{\ell_0}/\e,1, \ldots, 1)\in \Big(\frac{1}{\e}\mathcal{M}\Big)^{\ell_0}\times[1-\delta_0,1+\delta_0]^{\ell_0}$.
   By \eqref{in1}, \eqref{mo3} and the fact that the map \eqref{ni2}  is homotopic to the identity map, we have the map
   \begin{equation}\label{o4}
      \Psi_\e(0, p, s)=(\tilde{\Upsilon}_1(\gamma_0(p,s)),\ldots, \tilde{\Upsilon}_{\ell_0}(\gamma_0(p,s)), \tilde{P}_1(\gamma_0(p,s)), \ldots, \tilde{P}_{\ell_0}(\gamma_0(p,s))),
   \end{equation}
which is homotopic to the identity map.
By \eqref{s1}, \eqref{mi1i}, \eqref{in1} and \eqref{ji4}, if $(p,s)\in \Big(\frac{1}{\e}\mathcal{M}\Big)^{\ell_0}\times \partial([1-\delta_0,1+\delta_0]^{\ell_0})$, then for $\tau\in[0, 1]$
\begin{equation}\label{ccc1}
\Psi_\e(\tau,p,s)\in \Big(\frac{1}{\e}\mathcal{M}\Big)^{\ell_0}\times \partial([1-\delta_0,1+\delta_0]^{\ell_0})
\end{equation}
and for $t\in [0,1]$
\begin{equation}\label{ccc0}
\zeta(t,p,s)\in \Big(\frac{1}{\e}\mathcal{M}\Big)^{\ell_0}\times \partial([1-\delta_0,1+\delta_0]^{\ell_0}),
\end{equation}
where $\zeta$ is given in \eqref{mi1}.
Since $S^{\ell_0}\cong I^{\ell_0}\cup_i I^{\ell_0}$, where $I=[1-\delta_0,1+\delta_0]$,  $i:\partial I^{\ell_0}\rightarrow I^{\ell_0}$ is the inclusion and the definition of $I^{\ell_0}\cup_i I^{\ell_0}$ is given in  Definition \ref{me1},
by \eqref{o4}-\eqref{ccc0}, we can extend the map $\Psi_\e(1,\cdot,\cdot):\Big(\frac{1}{\e}\mathcal{M}\Big)^{\ell_0}\times [1-\delta_0,1+\delta_0]^{\ell_0}\rightarrow \Big(\frac{1}{\e}\mathcal{M}\Big)^{\ell_0}\times [1-\delta_0,1+\delta_0]^{\ell_0}$ to a map $\tilde{f}: \Big(\frac{1}{\e}\mathcal{M}\Big)^{\ell_0}\times S^{\ell_0}\rightarrow \Big(\frac{1}{\e}\mathcal{M}\Big)^{\ell_0}\times S^{\ell_0},$ where $\tilde{f}$ is homotopic to the identity map.
We note that
\begin{align*}
\mathcal{M}^{\ell_0}\times S^{\ell_0} &\mbox{ is a connected compact }(d+1)\ell_0\mbox{-dimensional  }\\
&\ \ \ \mbox{ embedded topological submanifold  of }\R^{\ell_0(N+1)+1}
\end{align*}
if $\mathcal{M}$ is a connected compact $d$-dimensional embedded topological  submanifold of  $\R^N$ for some $d=1,\cdots, N-1$. Moreover, by \cite[\rom{6} Theorem 1.6]{Bredon}
\begin{align*}
\mathcal{M}^{\ell_0}&\times S^{\ell_0} \mbox{ is a finite }(d+1)\ell_0\mbox{-dimensional  polyhedron }  \\
&\mbox{ with }0\neq[\bar{c}]\times \cdots \times[\bar{c}]\times [S^{\ell_0}]\in\Big(H_\ast(\mathcal{M}^{\ell_0};\Z_2)\otimes H_\ast(S^{\ell_0};\Z_2)\Big)_{(d+1)\ell_0} \\
&\ \ \ \ \ \ \ \ \  \ \ \ \ \ \ \ \  \ \ \ \ \ \ \ \ \ \ \ \ \ \ \ \ \ \ \ \ \ \  = H_{(d+1)\ell_0}\Big(\mathcal{M}^{\ell_0}\times S^{\ell_0};\Z_2\Big)
\end{align*}
if $\mathcal{M}$ is a finite $d$-dimensional polyhedron with $0\neq \bar{c}\in H_d(\mathcal{M};\Z_2)$ for some $ d=1,\ldots, N-1,$  where $\bar{c}$ is given in \eqref{h4} above.
     By  Lemma \ref{y2} and Lemma \ref{y1}, there exists $(p^\e,s^\e)\in \Big(\frac{1}{\e}\mathcal{M}\Big)^{\ell_0}\times [1-\delta_0,1+\delta_0]^{\ell_0}$ such that $\Psi_\e(1,p^\e,s^\e)=Q_\e$. Then $u_\e=h_\e(1,p^\e,s^\e)$ satisfies  for $j=1,\cdots,\ell_0$,
  \begin{align*}
    \tilde{\Upsilon}_j(u_\e)=q_j/\e \mbox{ and } \tilde{P}_j(u_\e)=1.
  \end{align*}
If $u_\e=\gamma_0(\bar{p},\bar{s})$, where $(\bar{p},\bar{s})\in \Big(\frac{1}{\e}\mathcal{M}\Big)^{\ell_0}\times [1-\delta_0,1+\delta_0]^{\ell_0}$ satisfying $\xi(\bar{p})=\min_{1\le i\neq j\le\ell_0}|\bar{p}_i-\bar{p}_j| =|\bar{p}_{i_0}-\bar{p}_{j_0}|\le r$, then, by  \eqref{pil} and \eqref{s1}, we have for a permutation $\sigma$ of $(1,\cdots,\ell_0)$, 
\begin{align*}
|\omega(\e\hat{\Upsilon}_{\sigma(j)}(\gamma_0(\bar{p},\bar{s})))-\omega(\e \bar{p}_j)|&\le |\omega(\e\hat{\Upsilon}_{\sigma(j)}(\gamma_0(\bar{p},\bar{s})))-\e\hat{\Upsilon}_{\sigma(j)}(\gamma_0(\bar{p},\bar{s}))|+|\e\hat{\Upsilon}_{\sigma(j)}(\gamma_0(\bar{p},\bar{s}))-\epsilon\bar{p}_j|\\
&<\frac{3}{2}\beta_1\  \mbox{ for }\ j=1,\cdots,\ell_0.
\end{align*}  Then, by  \eqref{pp2} , we have for a permutation $\sigma$ of $(1,2,\cdots,\ell_0)$,
\begin{align*}
\frac{4\beta_1}{\e}&= \frac{1}{\e}\min_{1\le i\neq j\le \ell_0}{|q_{i}-q_{j}|}\\
&\le\frac{|\omega(\e\hat{\Upsilon}_{\sigma(i_0)}(\gamma_0(\bar{p},\bar{s})))-\e\hat{\Upsilon}_{\sigma(j_0)}(\gamma_0(\bar{p},\bar{s}))|}{\e}\\
&\le \frac{|\omega(\e\hat{\Upsilon}_{\sigma(i_0)}(\gamma_0(\bar{p},\bar{s})))-\omega(\e \bar{p}_{i_0})|}{\e}+\frac{|\omega(\e \bar{p}_{i_0})-\omega(\e \bar{p}_{j_0})|}{\e}+\frac{|\omega(\e\hat{\Upsilon}_{\sigma(j_0)}(\gamma_0(\bar{p},\bar{s})))-\omega(\e \bar{p}_{j_0})|}{\e}\\
&<\frac{3\beta_1}{\e}+\frac{|\omega(\e \bar{p}_{i_0})-\omega(\e \bar{p}_{j_0})|}{\e}\le\frac{3\beta_1}{\e}+r,
\end{align*}
which is a contradiction.
We deduce that $u_\e \in Z(\rho_4, R_4)$, $(p^\e,s^\e)\in A_\e(r)$ and $ P(\hat{\tau}_{\e,j}(u_\e))=1 \mbox{ for } j=1, \ldots, \ell_0$. Moreover, we see that, by \eqref{pp2} and \eqref{pil},
\begin{align*}
  \Theta(u_\e)&=\min_{1\le i\neq j\le \ell_0}|\Upsilon_i(u_\e)-\Upsilon_j(u_\e)|\nonumber\\
              &\ge \min_{1\le i\neq j\le \ell_0}(|\tilde{\Upsilon}_i(u_\e)-\tilde{\Upsilon}_j(u_\e)|-|\tilde{\Upsilon}_i(u_\e)-\Upsilon_i(u_\e)|-|\tilde{\Upsilon}_j(u_\e)-\Upsilon_j(u_\e)|)\\
&\ge \frac{4\beta_1}{\e}-\max_{i=1,\cdots ,\ell_0}\Big|\frac{\omega(\e\Upsilon_i(u_\e))}{\e}-\Upsilon_i(u_\e)\Big|-\max_{j=1,\cdots, \ell_0}\Big|\frac{\omega(\e\Upsilon_j(u_\e))}{\e}-\Upsilon_j(u_\e)\Big|\\
              &\ge \frac{4\beta_1}{\e}-\frac{3\beta_1}{\e}=\frac{\beta_1}{\e} 
\end{align*}
and similarly, by \eqref{pp2} and \eqref{pil}, $\Theta(u_\e)\le \frac{7\beta_1}{\e}$. This completes the proof of Proposition \ref{in0}.
\end{proof}

\begin{cor}\label{lw}
  For $r> R_4$, $h_\e(\tau,p,s)\in C([0,1]\times \Big(\frac{1}{\e}\mathcal{M}\Big)^{\ell_0}\times[1-\delta_0,1+\delta_0]^{\ell_0},Z(\rho_4, R_4)\cup\{\gamma_0(p,s)\ |\ (p,s)\in \Big(\frac{1}{\e}\mathcal{M}\Big)^{\ell_0}\times[1-\delta_0, 1+\delta_0]^{\ell_0}\})$ satisfies \eqref{in1}-\eqref{ji1}. Then the following lower estimate holds
  \begin{equation}\label{in7}
  \liminf_{\e\rightarrow0} \max_{(p,s)\in A_\e(r)}I_\e(h_\e(1,p,s))\ge \ell_0 E_m
  \end{equation}
\end{cor}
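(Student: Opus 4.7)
The plan is to apply Proposition \ref{in0} directly and then combine the structural information it produces with the tail-minimization of $\tau_\e$ and the Pohozaev characterization \eqref{p1} of the least-energy level $E_m$. For any given $r > R_4$ and sufficiently small $\e < \bar{\e}_0$, Proposition \ref{in0} supplies $(p^\e, s^\e) \in A_\e(r)$ such that $u_\e := h_\e(1, p^\e, s^\e) \in Z(\rho_4, R_4)$ satisfies $P(\hat{\tau}_{\e,j}(u_\e)) = 1$ for every $j = 1, \dots, \ell_0$ and $\beta_1/\e \le \Theta(u_\e) \le 7\beta_1/\e$. These are exactly the ingredients needed to bound $I_\e(u_\e)$ from below by $\ell_0 E_m$ up to an $o(1)$ error.

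Using Lemma \ref{T6}, I first replace $u_\e$ by its tail-minimized version: $I_\e(u_\e) \ge I_\e(\tau_\e(u_\e))$, and
\[
\Big\|\tau_\e(u_\e) - \sum_{j=1}^{\ell_0} \hat{\tau}_{\e,j}(u_\e)\Big\|_{H^1} \le A_4 e^{-A_5 \Theta(u_\e)} \le A_4 e^{-A_5 \beta_1/\e} = o(1).
\]
Since $\{u_\e\}$ lies in the bounded set $Z(\rho_4, R_4)$ and $I_\e$ is $C^2$, this entails $I_\e(\tau_\e(u_\e)) = I_\e\bigl(\sum_j \hat{\tau}_{\e,j}(u_\e)\bigr) + o(1)$. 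Moreover, $\operatorname{supp} \hat{\tau}_{\e,j}(u_\e) \subset B(\Upsilon_j(u_\e), \Theta(u_\e)/3 + 1)$ while $|\Upsilon_i(u_\e) - \Upsilon_j(u_\e)| \ge \Theta(u_\e)$, so for small $\e$ these supports are pairwise disjoint and
\[
I_\e\Big(\sum_{j=1}^{\ell_0}\hat{\tau}_{\e,j}(u_\e)\Big) = \sum_{j=1}^{\ell_0} I_\e(\hat{\tau}_{\e,j}(u_\e)).
\]

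Next, for each bump I verify that $\e \cdot \operatorname{supp} \hat{\tau}_{\e,j}(u_\e) \subset O$. Indeed, since $u_\e \in Z(\rho_4, R_4)$ and Lemma \ref{cm} yields $|\Upsilon_j(u_\e) - y_j| \le 2R_0$ up to a permutation, we have $\e \Upsilon_j(u_\e) \in N_{\beta + 2R_0 \e}(\mathcal{M})$, and therefore
\[
\e \cdot \operatorname{supp} \hat{\tau}_{\e,j}(u_\e) \subset N_{\beta + \e(2R_0 + 1) + \e\Theta(u_\e)/3}(\mathcal{M}) \subset N_{\beta + 7\beta_1/3 + o(1)}(\mathcal{M}).
\]
Because $\beta < \beta_1 < \bar{\beta}$ and $10\bar{\beta} = \operatorname{dist}(\mathcal{M}, \R^N \setminus O)$, this neighborhood lies in $O$ for all sufficiently small $\e$. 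Hence $V(\e x) \ge m$ on the support, which combined with $P(\hat{\tau}_{\e,j}(u_\e)) = 1$ and \eqref{p1} gives
\[
I_\e(\hat{\tau}_{\e,j}(u_\e)) = L_m(\hat{\tau}_{\e,j}(u_\e)) + \tfrac{1}{2}\int_{\R^N} (V(\e x) - m)\hat{\tau}_{\e,j}(u_\e)^2\, dx \ge L_m(\hat{\tau}_{\e,j}(u_\e)) \ge E_m.
\]
Summing over $j$ and chaining the previous estimates yields $I_\e(u_\e) \ge \ell_0 E_m + o(1)$, so that $\max_{(p,s) \in A_\e(r)} I_\e(h_\e(1,p,s)) \ge I_\e(u_\e) \ge \ell_0 E_m + o(1)$, which is exactly \eqref{in7}.

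The main obstacle is the geometric containment check in the third paragraph: the bumps $\hat{\tau}_{\e,j}(u_\e)$ sit on balls whose $\e$-rescaled radii are comparable to $\beta_1$ rather than infinitesimal, so preserving the sign $V(\e x) - m \ge 0$ on their supports relies delicately on the two-sided bound $\beta_1/\e \le \Theta(u_\e) \le 7\beta_1/\e$ supplied by Proposition \ref{in0} together with the calibration of $\beta, \beta_1$ against $\bar{\beta}$. Once this containment is in place, the remainder is a standard decoupling/exponential-tail argument driven by Lemma \ref{T6}.
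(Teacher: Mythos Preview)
Your proof is correct and follows essentially the same route as the paper: apply Proposition~\ref{in0}, use Lemma~\ref{T6} to replace $u_\e$ by the sum of localized bumps up to an exponentially small error, decouple via disjoint supports, and bound each bump below by $E_m$ using \eqref{p1} and the fact that $V(\e x)\ge m$ on the supports. Your explicit verification that $\e\cdot\operatorname{supp}\hat\tau_{\e,j}(u_\e)\subset O$ via the calibration $\beta<\beta_1<\bar\beta$ and the upper bound $\Theta(u_\e)\le 7\beta_1/\e$ is exactly what the paper condenses into the citation of (V2) and \eqref{in4}.
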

\begin{proof}
  Assume $h_\e\in C([0,1]\times \Big(\frac{1}{\e}\mathcal{M}\Big)^{\ell_0}\times[1-\delta_0,1+\delta_0]^{\ell_0}, Z(\rho_4, R_4)\cup\{\gamma_0(p,s)\ |\ (p,s)\in \Big(\frac{1}{\e}\mathcal{M}\Big)^{\ell_0}\times[1-\delta_0, 1+\delta_0]^{\ell_0}\})$ satisfies \eqref{in1}-\eqref{ji1}. By Proposition \ref{in0}, there exists $(p^\e, s^\e)\in A_\e(r)$ such that  $u_\e=h_\e(1,p^\e,s^\e)\in Z(\rho_4, R_4)$ satisfying \eqref{in3} and \eqref{in4}. By (i) of Lemma \ref{T6}, we have $I_\e(u_\e)\ge I_\e(\tau_\e(u_\e))$. By (ii) of Lemma \ref{T6} and \eqref{in4},
  \[
   \Big\|\sum_{j=1}^{\ell_0}\hat{\tau}_{\e,j}(u_\e)-\tau_\e(u_\e)\Big\|_{H^1}\le A_4\exp(-A_5\Theta(u_\e))\rightarrow0 \mbox{ as } \e\rightarrow0.
   \]
   Then as $\e\rightarrow0$,
   \begin{equation}\label{in5}
   I_\e(u_\e)\ge I_\e(\sum_{j=1}^{\ell_0}\hat{\tau}_{\e,j}(u_\e))+o(1)=\sum_{j=1}^{\ell_0}I_\e(\hat{\tau}_{\e,j}(u_\e))+o(1).
   \end{equation}
   By \eqref{p1} and \eqref{in3}, we have
   \begin{equation}\label{in6}
     L_m(\hat{\tau}_{\e,j}(u_\e))\ge E_m.
   \end{equation}
   It follows from (V2), \eqref{in4}, \eqref{in5} and \eqref{in6} that as $\e \rightarrow
   0$,
   \begin{align*}
     I_\e(u_\e)&\ge\sum_{j=1}^{\ell_0}I_\e(\hat{\tau}_{\e,j}(u_\e))+o(1)\\
              &=\sum_{j=1}^{\ell_0}L_m(\hat{\tau}_{\e,j}(u_\e))+\frac{1}{2}\sum_{j=1}^{\ell_0}\int_{B(\Upsilon_j(u_\e),\frac{\Theta(u_\e)}{3}+1)}(V(\e x)-m)(\hat{\tau}_{\e,j}(u_\e))^2dx+o(1)\\
              &\ge \ell_0 E_m+o(1).
   \end{align*}
Thus, we get \eqref{in7}.
\end{proof}
\section{Deformation argument}\label{Deformation argument}
The aim of this section is to complete the proof of our Theorem \ref{main thm}. First we will show via a deformation argument that for any given $\e_1>0, I_\e$ has a critical point $u$ in $Z(\rho_4,R_4)$ with a critical value in $[\ell_0E_m-\e_1, b_\e]$ for small $\e$, where $b_\e$ is a sequence satisfying $b_\e \rightarrow \ell_0E_m$ as $\e\rightarrow 0$.

From now on, for $A, B\subset H^1(\R^N)$, we denote
\[
\operatorname{dist}_{H^1}(A,B)=\inf\{\|a-b\|_{H^1} \ | \ a\in A, b\in B\}.
\]
We can see that there exists a constant $d_0>0$ independent of $\e$ such that
\[
\operatorname{dist}_{H^1}(Z(\frac{1}{2}\rho_4,2R_4),\partial Z(\rho_4, R_4))\ge 2d_0 \mbox{ for small } \e.
\]
Without loss of generality, we may assume
\begin{equation}\label{e0}
\e_1<\frac{1}{2}\nu_0d_0,
\end{equation}
where $\nu_0$ is given in Proposition \ref{g1}.
We take and fix $r>R_4$ such that for small $\e$
\begin{equation}\label{d1}
  \{\gamma_0(p,s)\ | \ (p,s)\in A_\e(r)\}\subset Z(\frac{\rho_4}{2}, 2R_4)
\end{equation}
and set
\begin{equation}\label{d0}
b_\e=\max_{(p,s)\in A_\e(r)}I_\e(\gamma_0(p,s)).
\end{equation}
Such a $r$ exists by \eqref{i4}. By Lemma \ref{i2} and Proposition \ref{inter}, we have  for small $\e$,
\begin{equation}\label{ju1}
b_\e\le \ell_0E_m+\e_1
\end{equation}
and
\begin{equation}\label{d2}
  \max_{(p,s)\in \partial A_\e(r)} I_\e(\gamma_0(p,s))\le \ell_0 E_m-\nu_1.
\end{equation}
By Proposition \ref{g1}, there exists $\nu_0>0$ such that for small $\epsilon>0$
\begin{equation}\label{d3}
\|I_\epsilon^\prime(u)\|_{H^{-1}}\ge \nu_0
\end{equation}
for all $u\in Z(\rho_4,R_4)\setminus Z(\frac{1}{2}\rho_4,2R_4)$ with $I_\epsilon(u)\le b_\epsilon$.
In what follows, we fix such $r$ and consider small $\e$ for which \eqref{d1}-\eqref{d3} hold.

We argue indirectly and  suppose that $I_\e^\prime(u)\neq 0$ for $u\in Z(\rho_4, R_4)$ with $I_\e(u)\in [\ell_0E_m-\e_1, b_\e].$ By Proposition \ref{g2}, we can find $\kappa_\e>0$ such that
\[
\|I_\e^\prime(u)\|_{H^{-1}}\ge \kappa_\e \mbox{ for } u\in Z(\rho_4, R_4)\mbox{ with } I_\e(u)\in [\ell_0E_m-\e_1, b_\e].
\]
We may assume $\kappa_\e<\nu_0$.

For $u\in Z(\rho_4, R_4)$ with $I_\e(u)\le b_\e$, we consider the following ODE:
\begin{equation}\label{d4}
  \begin{cases}
    &\frac{d\eta}{d\tau}=-\varphi_1(I_\e(\eta))\varphi_2(\operatorname{dist}_{H^1}(\eta, Z(\frac{\rho_4}{2}, 2R_4)))\frac{I_\e^\prime(\eta)}{\|I_\e^\prime(\eta)\|_{H^{-1}}},\\
    &\eta(0,u)=u,
  \end{cases}
\end{equation}
where $\varphi_1(\xi), \varphi_2(\xi): \R\rightarrow [0,1]$ are Lipschitz continuous functions such that
\begin{align*}
  &\varphi_1(\xi)=\begin{cases}
                   1 & \mbox{if } \xi\ge \ell_0E_m-\frac{1}{2}\min\{\nu_1,\e_1\}, \\
                   0 & \mbox{if } \xi\le \ell_0E_m-\min\{\nu_1, \e_1\},
                 \end{cases}\\
  &\varphi_2(\xi)=\begin{cases}
                   1 & \mbox{if } \xi\le d_0, \\
                   0 & \mbox{if } \xi\ge 2d_0.
                 \end{cases}
\end{align*}
For any $u\in Z(\rho_4,R_4)\cap \{ u \ | \ I_\e(u)\le b_\e\}$, \eqref{d4} has a unique solution $\eta(\tau, u)$ at least locally in time $\tau$ and since
\begin{align*}
\frac{d}{d\tau}I_\e(\eta(\tau,u))&=I_\e^\prime(\eta(\tau,u))\frac{d\eta}{d\tau}\\
                                 &=-\varphi_1(I_\e(\eta))\varphi_2(\operatorname{dist}_{H^1}(\eta, Z(\frac{\rho_4}{2}, 2R_4)))\|I_\e^\prime(\eta)\|_{H^{-1}},
\end{align*}
we can see the following properties in a standard way
\begin{enumerate}[(a)]
  \item $\eta(0,u)=u$ for all $u$ with $I_\e(u)\le b_\e$;
  \item $\eta(\tau, u)=u$ for all $\tau\ge 0$ if $I_\e(u)\le \ell_0E_m-\min\{\nu_1,\e_1\};$
  \item $\|\frac{d\eta}{d\tau}(\tau, u)\|_{H^{1}}\le 1$ for all $\tau$ and $u$ with $I_\e(u)\le b_\e$;
  \item $\frac{d}{d\tau}I_\e(\eta(\tau, u))\le 0$ for all $\tau$ and $u$ with $I_\e(u)\le b_\e$;
  \item $\frac{d}{d\tau}I_\e(\eta(\tau, u))\le -\nu_0$ if $\eta\in (Z(\rho_4,R_4)\setminus Z(\frac{\rho_4}{2},2R_4))\cap \{u\ | \ I_\e(u)\in [\ell_0E_m-\frac{1}{2}\min\{\nu_1,\e_1\}, b_\e]\}$ and $\operatorname{dist}_{H^1}(\eta,Z(\frac{\rho_4}{2},2R_4))\le d_0$;
  \item $\frac{d}{d\tau}I_\e(\eta(\tau, u))\le -\kappa_\e$ if $\eta\in Z(\rho_4,R_4)\cap \{u\ | \ I_\e(u)\in [\ell_0E_m-\frac{1}{2}\min\{\nu_1,\e_1\}, b_\e]\}$\\
 and $\operatorname{dist}_{H^1}(\eta,Z(\frac{\rho_4}{2},2R_4))\le d_0$.
\end{enumerate}
Now we claim the following lemma.
\begin{lem}\label{d5}
  Let
  \begin{equation}\label{e1}
  T_\e=\frac{1}{\kappa_\e}(\frac{1}{2}\min\{\nu_1,\e_1\}+\e_1).
  \end{equation}
  Then the followings hold
  \begin{enumerate}[(i)]
    \item for any $(p,s)\in A_\e(r)$, the solution $\eta(\tau, \gamma_0(p,s))$ exists globally in time $\tau\ge0$ and satisfies $\eta(\tau,\gamma_0(p,s))\in Z(\rho_4,R_4)$ for all $\tau\ge 0$;
    \item for $(p,s)\in \partial A_\e(r), \eta(\tau, \gamma_0(p,s))=\gamma_0(p,s)$ for all $\tau \ge 0$;
    \item $\eta(\tau+\sigma,\gamma_0(p,s))=\eta(\tau,\eta(\sigma,\gamma_0(p,s)))$ for all $\tau, \sigma\ge 0$ and $(p,s)\in A_\e(r)$;
    \item $I_\e(\eta(T_\e,\gamma_0(p,s)))\le \ell_0 E_m-\frac{1}{2}\min\{\nu_1,\e_1\}$ for all $(p,s)\in A_\e(r)$.
  \end{enumerate}
\end{lem}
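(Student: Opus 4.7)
The strategy is to verify (i)--(iv) in turn by exploiting the cutoff pseudo-gradient flow \eqref{d4}. For (i) and (iii), I would invoke Cauchy--Lipschitz theory in $H^1(\mathbb{R}^N)$: $I_\e\in C^2$, the cutoffs $\varphi_1,\varphi_2$ are Lipschitz, and the standing (proof-by-contradiction) assumption gives $\|I_\e'(u)\|_{H^{-1}}\ge \kappa_\e$ wherever the right-hand side of \eqref{d4} is nonzero, so the normalized gradient is locally Lipschitz while the speed of the flow is at most $1$. This yields local existence, uniqueness, and the semigroup identity (iii). For global existence together with the confinement $\eta(\tau,\gamma_0(p,s))\in Z(\rho_4,R_4)$ in (i), I would combine \eqref{d1} (so the initial datum lies in $Z(\tfrac{\rho_4}{2},2R_4)$) with the fact that $\varphi_2$ vanishes once $\operatorname{dist}_{H^1}(\cdot,Z(\tfrac{\rho_4}{2},2R_4))\ge 2d_0$; since $\operatorname{dist}_{H^1}(Z(\tfrac{\rho_4}{2},2R_4),\partial Z(\rho_4,R_4))\ge 2d_0$, a continuity argument precludes the trajectory from reaching $\partial Z(\rho_4,R_4)$.

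For (ii), if $(p,s)\in \partial A_\e(r)$ then \eqref{d2} gives $I_\e(\gamma_0(p,s))\le \ell_0E_m-\nu_1\le \ell_0E_m-\min\{\nu_1,\e_1\}$, so $\varphi_1(I_\e(\gamma_0(p,s)))=0$ at $\tau=0$. Monotonicity of $\tau\mapsto I_\e(\eta(\tau,\gamma_0(p,s)))$ (property (d)) keeps $I_\e$ below this same threshold for all time, so $\varphi_1\equiv 0$ on the trajectory, the right-hand side of \eqref{d4} vanishes identically, and $\eta(\tau,\gamma_0(p,s))=\gamma_0(p,s)$ for every $\tau\ge 0$.

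The substantive step is (iv), which I would argue by contradiction. Suppose for some $(p,s)\in A_\e(r)$ one has $I_\e(\eta(T_\e,\gamma_0(p,s)))>\ell_0E_m-\tfrac{1}{2}\min\{\nu_1,\e_1\}$. Monotonicity then gives $I_\e(\eta(\tau))>\ell_0E_m-\tfrac{1}{2}\min\{\nu_1,\e_1\}$ for every $\tau\in[0,T_\e]$, hence $\varphi_1\equiv 1$ along the trajectory. The crux is to show that $\phi(\tau):=\operatorname{dist}_{H^1}(\eta(\tau),Z(\tfrac{\rho_4}{2},2R_4))\le d_0$ throughout $[0,T_\e]$. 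If not, let $\tau^*\in(0,T_\e]$ be the first time $\phi(\tau^*)=d_0$; on the open set $A\subset[0,\tau^*]$ where $\eta(\tau)\notin Z(\tfrac{\rho_4}{2},2R_4)$ one has $\varphi_2=1$ and $\|d\eta/d\tau\|_{H^1}=1$, so property (e) yields $dI_\e/d\tau\le -\nu_0$; since $\phi$ vanishes off $A$ and travels from $0$ to $d_0$, its total variation forces $|A|\ge d_0$, producing an energy drop of at least $\nu_0 d_0>2\e_1$ by \eqref{e0}. This contradicts the available budget $I_\e(\eta(0))-I_\e(\eta(\tau^*))\le b_\e-(\ell_0E_m-\tfrac{1}{2}\min\{\nu_1,\e_1\})\le \tfrac{3}{2}\e_1$. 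Granted $\phi\le d_0$ on $[0,T_\e]$, property (f) applies throughout, hence $dI_\e/d\tau\le -\kappa_\e$, and integrating with $T_\e=\kappa_\e^{-1}(\tfrac{1}{2}\min\{\nu_1,\e_1\}+\e_1)$ and $I_\e(\eta(0))\le b_\e\le \ell_0E_m+\e_1$ yields $I_\e(\eta(T_\e))\le \ell_0E_m-\tfrac{1}{2}\min\{\nu_1,\e_1\}$, contrary to hypothesis.

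The main obstacle is the confinement sub-claim for $\phi$ in (iv): this is precisely the step that marries the annular gradient bound of Proposition \ref{g1} (embedded in property (e)) with the smallness choice \eqref{e0} of $\e_1$; once $\phi\le d_0$ is secured, assertion (iv) collapses to a direct integration of the $-\kappa_\e$ bound coming from property (f).
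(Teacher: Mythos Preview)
Your proposal is correct and follows essentially the same approach as the paper's proof. The only cosmetic difference is in part (iv): where you phrase the annular step via a ``total variation of $\phi$ on the set $A$'' argument, the paper simply takes $\tau_1$ to be the first time $\operatorname{dist}_{H^1}(\eta,Z(\tfrac{\rho_4}{2},2R_4))=d_0$ and observes (from the unit-speed bound, property~(c)) that $\eta(\tau)\in Z(\rho_4,R_4)\setminus Z(\tfrac{\rho_4}{2},2R_4)$ on the explicit interval $[\tau_1-d_0,\tau_1]$, then integrates property~(e) there; this is exactly your $|A|\ge d_0$ estimate made concrete.
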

\begin{proof}
   $(i)$ The solution $\eta(\tau,u)$ of \eqref{d4} is extendable as long as $\eta(\tau,u)\in Z(\rho_4,R_4)$ and $I_\e(\eta)\in [\ell_0E_m-\e_1, b_\e]$. We note that the right hand side of the first equation \eqref{d4} vanishes on $\partial Z(\rho_4,R_4)$. Then, by the property (b) and the property (d),  we deduce that for $u\in Z(\rho_4,R_4)$ with $I_\e(u)\le b_\e$, $\eta(\tau,u)$ exists globally and $\eta(\tau,u)\in Z(\rho_4,R_4)$ . In particular, by \eqref{d1} and \eqref{d0}, for $u=\gamma_0(p,s)$ with $(p,s)\in A_\e(r)$, we see that $\eta(\tau,\gamma_0(p,s))$ exists globally in $\tau\ge 0$ and $\eta(\tau,\gamma_0(p,s))\in Z(\rho_4,R_4)$ for all $\tau\ge 0$.\\
   $(ii)$ The proof follows directly from property (b) and \eqref{d2}.\\
   $(iii)$ The semigroup property for solutions of \eqref{d4} gives $(iii)$.\\
  $(iv)$ Arguing indirectly, we suppose that  for $u=\gamma_0(p,s)$,
  \[
  I_\e(\eta(\tau,u))> \ell_0 E_m-\frac{1}{2}\min\{\nu_1,\e_1\} \mbox{ for } \tau\in [0, T_\e].
  \]
  We consider two cases:
  \begin{enumerate}[(A)]
    \item there exists $\tau_1\in [0,T_\e]$ such that $\operatorname{dist}_{H^1}(\eta(\tau_1,u),Z(\frac{1}{2}\rho_4,2R_4))=d_0$;
    \item $\operatorname{dist}_{H^1}(\eta(\tau,u),Z(\frac{1}{2}\rho_4,2R_4))<d_0$ for all $\tau\in [0,T_\e].$
  \end{enumerate}
  Suppose that (A) takes a place. Since $u=\gamma_0(p,s)\in Z(\frac{\rho_4}{2},2R_4)$, we may assume that $\tau_1$ is the first time that satisfies dist$_{H^1}(\eta(\tau_1,u),Z(\frac{\rho_4}{2},2R_4))=d_0$. By the property (c), we can see $\tau_1\ge d_0$ and
  \[
  \eta(\tau,u)\in Z(\rho_4,R_4)\setminus Z(\frac{\rho_4}{2},2R_4) \mbox{ for } \tau\in [\tau_1-d_0,\tau_1].
  \]
  Thus by the property (d), the property (e), \eqref{e0} and \eqref{ju1}, we have
  \begin{align*}
  &\ell_0 E_m-\frac{1}{2}\min\{\nu_1,\e_1\}<I_\e(\eta(T_\e,u))=I_\e(u)+\int_{0}^{T_\e}\frac{d}{d\tau}I_\e(\eta(\tau,u))d\tau\\
  &\le I_\e(u)+\int_{\tau_1-d_0}^{\tau_1}\frac{d}{d\tau}I_\e(\eta(\tau,u))d\tau\le I_\e(u)-\int_{\tau_1-d_0}^{\tau_1}\nu_0d\tau\\
  &\le (\ell_0E_m+\e_1)-\nu_0d_0\le \ell_0E_m-\e_1.
  \end{align*}
This is a contradiction and (A) cannot take a place.  Next suppose that (B) occurs. Then by the property (f) and \eqref{ju1},
  \begin{align*}
    &\ell_0 E_m-\frac{1}{2}\min\{\nu_1,\e_1\}<I_\e(\eta(T_\e,u))=I_\e(u)+\int_{0}^{T_\e}\frac{d}{d\tau}I_\e(\eta(\tau,u))d\tau\\
    &\le I_\e(u)-\int_{0}^{T_\e}\kappa_\e d\tau\le \ell_0E_m+\e_1-\kappa_\e T_\e,
  \end{align*}
  which is a contradiction to the definition \eqref{e1} of $T_\e$. Thus $(iv)$ holds.
\end{proof}
  \noindent{\bf Completion of the proof of Theorem \ref{main thm}.}
  By Lemma \ref{d5} $(ii)$, we can extend the deformation $\eta(\cdot,\gamma_0(\cdot,\cdot))$ on $[0,T_\e]\times A_\e(r)$ to the function on $[0,T_\e]\times\Big(\frac{1}{\e}\mathcal{M}\Big)^{\ell_0}\times[1-\delta_0,1+\delta_0]^{\ell_0}$(still denote $\eta(\cdot,\gamma_0(\cdot,\cdot))$) so that
  \[
  \eta(\tau,\gamma_0(p,s))=\gamma_0(p,s) \mbox{ for } (\tau,p,s)\in [0,T_\e]\times \Big(\Big(\Big(\frac{1}{\e}\mathcal{M}\Big)^{\ell_0}\times[1-\delta_0,1+\delta_0]^{\ell_0}\Big)\setminus A_\e(r)\Big).
  \]
  By Lemma \ref{d5} $(iv)$, for small $\e>0$, it holds that
  \[
  I_\e(\eta(T_\e,\gamma_0(p,s)))\le \ell_0 E_m-\frac{1}{2}\min\{\nu_1,\e_1\}\ \mbox{ for all }\ (p,s)\in A_\e(r).
  \]
  Applying Corollary \ref{lw} to $\eta(\tau, \gamma_0(p,s))$, we have
  \[
  \liminf_{\e\rightarrow0} \max_{(p,s)\in A_\e(r)}I_\e(\eta(T_\e,\gamma_0(p,s)))\ge \ell_0 E_m,
  \]
  which is a contradiction. Thus for small $\e$, $I_\e$ has a critical point in $Z(\rho_4, R_4)\cap\{u\ | \ I_\e(u)\in [\ell_0E_m-\e_1, b_\e]\}.$

  Since we can choose $\e_1$ arbitrarily small, we can see \eqref{main eq} has a solution $u_\e$ for small $\e$ such that
  \begin{equation}\label{d6}
  \begin{aligned}
    &u_\e\in Z(\rho_4,R_4)\\
    &I_\e(u_\e)\rightarrow \ell_0 E_m \mbox{ as } \e\rightarrow0.
  \end{aligned}
  \end{equation}
  By the choice of $\rho_4$, especially $\rho_4<\rho_0/8$, and \eqref{d6}, from the proof of Proposition \ref{g1}, after extracting a subsequence, still denoted by $\e$, there exist sequences $(x_\e^k)_{k=1}^{\ell_0}, (W^k)_{k=1}^{\ell_0}\subset S_m$ satisfying
  \begin{align*}
    &\lim_{\e\rightarrow0}\operatorname{dist}(x_\e^k,\mathcal{M})=0 \mbox{ for all } k\in \{1,\ldots, \ell_0\},\\
    &\lim_{\e\rightarrow0}\min_{1\le i\neq j\le \ell_0}|x_\e^i/\e-x_\e^j/\e|=\infty,\\
    &\|u_\e-\sum_{k=1}^{\ell_0}W^k(\cdot-x_\e^k/\e)\|_{H^1}\rightarrow 0 \mbox{ as } \e \rightarrow0.
  \end{align*}
  This completes the proof of Theorem \ref{main thm}. $\Box$

\section*{Acknowledgement}
The paper is a part of the author’s thesis. The author is grateful to his thesis adviser Professor Jaeyoung Byeon for suggesting this type of problem and for all his encouragements and useful discussions.

\section{Appendix}
We shall need some topological tools that we now present for the reader convenience. We   follow notations in \cite{Bredon}.

Points $v_0,\cdots, v_n\in \R^\infty$ are ``affinely independent'' if they span an affine $n$-plane, i.e. if 
\[
\Big(\sum_{i=0}^n\lambda_iv_i=0, \sum_{i=0}^n\lambda_i=0\Big)	\Rightarrow \mbox{ all } \lambda_i=0.
\]
If $v_0,\cdots, v_n\in \R^\infty$ are  affinely independent then we put
\[
\sigma=(v_0,\cdots, v_n)=\Big\{\sum_{i=0}^n\lambda_iv_i\ | \ \sum_{i=0}^n\lambda_i=1, \lambda_i\ge 0\Big\}, 
\]
which is the ``affine simplex" spanned by the $v_i$. For $k\le n$, a $k$-face of $\sigma$ is any affine simplex of the form $(v_{i_0},\cdots, v_{i_k})$ where these vertices are all distinct and so are affinely independent.
\begin{defn}
A (geometric) simplicial complex  $\mathcal{S}$ is a collection of affine simplices such that
\begin{enumerate}[(i)]
\item if $\sigma \in  \mathcal{S}$, then any face of $\sigma$ is in  $\mathcal{S}$;
\item if $\sigma, \tau\in  \mathcal{S}$, then $\sigma\cap \tau$ is a face of both $\sigma$ and $\tau$, or is empty;
\end{enumerate}
\end{defn}
A $s$-dimensional simplicial complex (or simplicial $s$-complex)  $\mathcal{S}$ is a simplicial complex where the largest dimension of any simplex in $\mathcal{S}$ equals $s$. Let $K$ be a finite simplicial complex and choose an ordering of the vertices $v_0, v_1, \ldots$ of $K$ and $K^{[r]}$ denotes the r-fold iterated barycentric subdivision of $K$. Define mesh$(K)$ to be the maximum diameter of a simplex of $K$. We put $|K|=\bigcup\{\sigma|\sigma\in K\}$. This is called the polyhedron of $K$. Also $C_k(K;\Z_2)$ denotes  the free abelian group generated by the $k$-simplices $\langle v_{\sigma_0}, \ldots, v_{\sigma_k}\rangle$ of $K$ with coefficients in $\Z_2$, where $\sigma_0<\cdots< \sigma_k$  and $\partial_k: C_k(K;\Z_2)\rightarrow C_{k-1}(K;\Z_2)$ denotes a homomorphism  given by
\begin{equation}\label{ap}
\partial_k \langle v_{\sigma_0}, \ldots, v_{\sigma_k}\rangle = \sum_{i=0}^{k} (-1)^i \langle v_{\sigma_0}, \ldots, \hat{v}_{\sigma_i},\ldots, v_{\sigma_k}\rangle,
\end{equation}
where $\sigma=\langle v_{\sigma_0}, \ldots, v_{\sigma_k}\rangle$ is  a simplex of $K$ satisfying $\sigma_0<\cdots< \sigma_k$ and the `hat' symbol $\ \hat{}\ $ over $v_{\sigma_i}$ indicates that this vertex is deleted from the sequence $v_{\sigma_0}, \ldots, v_{\sigma_k}$.
\begin{defn}\label{v2}
If $K$ and $L$ are simplicial complexes, then a simplicial map $f:K\rightarrow L$ is a map $f:|K|\rightarrow|L|$ which takes vertices of any given simplex of $K$ into vertices of some simplex of $L$ and is affine on each simplex, i.e., $f(\sum_{i}\lambda_iv_{\sigma_i})=\sum_i\lambda_if(v_{\sigma_i}).$
\end{defn}
\begin{defn}
For $x\in |K|$, for a simplicial complex $K$, we define the carrier of $x$ (carr$(x)$) to be the smallest simplex of $K$ containing $x$.
\end{defn}
\begin{defn}\label{v3}
Let $f:|K|\rightarrow|L|$ be continuous. A simplicial approximation to $f$ is a simplicial map $g:K\rightarrow L$ such that $g(x)\in \mbox{carr}(f(x))$ for each $x\in |K|$.
\end{defn}
\begin{cor}\cite[\rom{4} Corollary 22.4]{Bredon}\label{corol11}
If $g$ is a simplicial approximation to $f$, then $f$ is homotopic to $g$.
\end{cor}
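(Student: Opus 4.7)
The plan is to construct a straight-line homotopy and then exploit the convexity of simplices together with the carrier condition to show it actually lies inside $|L|$. Concretely, define
\[
H : |K| \times [0,1] \to \R^\infty, \qquad H(x,t) = (1-t)f(x) + t\,g(x).
\]
Continuity of $H$ is immediate from continuity of $f$ and $g$ together with the fact that the ambient linear operations in $\R^\infty$ are continuous, and one clearly has $H(x,0) = f(x)$ and $H(x,1) = g(x)$. The only real content is to check that the image of $H$ is contained in $|L|$, so that $H$ is a homotopy in $|L|$ rather than just in the ambient Euclidean space.

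For the containment, I would fix $x \in |K|$ and consider $\sigma := \mathrm{carr}(f(x))$, the smallest simplex of $L$ containing $f(x)$. By definition of carrier one has $f(x) \in \sigma$, and by the hypothesis that $g$ is a simplicial approximation to $f$ (Definition \ref{v3}), one has $g(x) \in \sigma$ as well. Since $\sigma$ is an affine simplex and hence a convex subset of $\R^\infty$, the entire line segment between $f(x)$ and $g(x)$ lies in $\sigma \subset |L|$, giving $H(x,t) \in |L|$ for every $t \in [0,1]$. Varying $x$, this yields a well-defined continuous map $H : |K|\times[0,1] \to |L|$, which is precisely a homotopy from $f$ to $g$.

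I do not expect any serious obstacle: the argument is essentially a one-line convexity observation once the definitions of \emph{carrier} and \emph{simplicial approximation} are unpacked. The only subtle point worth being explicit about is that one should use the carrier of $f(x)$ (not of $g(x)$ nor of $x$) as the ambient simplex, because the simplicial approximation condition is stated as $g(x) \in \mathrm{carr}(f(x))$. With that choice, both $f(x)$ and $g(x)$ sit in a common simplex of $L$, and convexity of affine simplices does the rest.
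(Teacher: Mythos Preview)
Your argument is correct and is exactly the standard straight-line homotopy proof. Note, however, that the paper does not supply its own proof of this corollary: it simply cites \cite[\rom{4} Corollary 22.4]{Bredon} and uses the result as a black box, so there is no ``paper's proof'' to compare against beyond the reference to Bredon's textbook, whose argument is the one you have written.
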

\begin{thm}\cite[\rom{4} Theorem 22.10]{Bredon}\label{v1}
  Let $K$ and $L$ be finite simplicial complexes and let $f:|K|\rightarrow|L|$ be continuous. Then there is an integer $t\ge0$ such that there is a simplicial approximation $g:K^{[t]}\rightarrow L$ to $f$.
\end{thm}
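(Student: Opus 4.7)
The plan is the classical star-covering argument for simplicial approximation. For each vertex $v$ of a simplicial complex $M$, write $\mathrm{st}_M(v)$ for the open star of $v$ in $M$, i.e.\ the union of the (relative) interiors of all simplices of $M$ having $v$ as a vertex. Two standard facts drive the proof: (a) $\{\mathrm{st}_L(v)\}_{v\in L^{(0)}}$ is an open cover of $|L|$; (b) vertices $w_0,\dots,w_k$ of a simplicial complex $M$ span a $k$-simplex of $M$ if and only if $\bigcap_{i=0}^k \mathrm{st}_M(w_i)\neq\emptyset$.

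First, because $K$ is a finite complex, $|K|$ is a compact metric space. The pullback cover $\mathcal{U}=\{f^{-1}(\mathrm{st}_L(v)):v\in L^{(0)}\}$ is then an open cover of $|K|$, so it admits a Lebesgue number $\delta>0$. Iterated barycentric subdivision satisfies $\mathrm{mesh}(K^{[t]})\to 0$ as $t\to\infty$ (a routine bound, e.g.\ $\mathrm{mesh}(K^{[t]})\le\bigl(\tfrac{n}{n+1}\bigr)^t\mathrm{mesh}(K)$ for $K$ of dimension $n$); in particular the diameter of each open star $\mathrm{st}_{K^{[t]}}(w)$ tends to $0$ uniformly in $w$. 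Fix $t$ large enough that every such star has diameter $<\delta/2$, so that for each vertex $w$ of $K^{[t]}$ there exists at least one vertex $v(w)\in L^{(0)}$ with $f(\mathrm{st}_{K^{[t]}}(w))\subset \mathrm{st}_L(v(w))$. Make such a choice and set $g(w):=v(w)$.

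Next I would verify that this vertex map extends to a simplicial map $g:K^{[t]}\to L$. Given a simplex $\langle w_0,\dots,w_k\rangle$ of $K^{[t]}$, the intersection $\bigcap_i \mathrm{st}_{K^{[t]}}(w_i)$ is nonempty by (b), so picking any $x$ in it gives $f(x)\in \bigcap_i \mathrm{st}_L(g(w_i))$; hence $\{g(w_0),\dots,g(w_k)\}$ is the vertex set of a simplex of $L$ by (b) again. Extending affinely on each simplex (Definition \ref{v2}) then yields a bona fide simplicial map $g:K^{[t]}\to L$. Finally, to confirm the carrier condition of Definition \ref{v3}, take $x\in|K^{[t]}|$ and let $\sigma=\langle w_0,\dots,w_k\rangle$ be its carrier in $K^{[t]}$; since $x\in\bigcap_i\mathrm{st}_{K^{[t]}}(w_i)$ we get $f(x)\in\bigcap_i\mathrm{st}_L(g(w_i))$, and if $\tau$ denotes $\mathrm{carr}_L(f(x))$, every $g(w_i)$ must be a vertex of $\tau$. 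Because $g(x)$ lies in the convex hull of $\{g(w_0),\dots,g(w_k)\}\subset\tau$, we conclude $g(x)\in\tau=\mathrm{carr}(f(x))$, as required.

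The main obstacle is the combinatorial step in the previous paragraph: one must know that affine independence of the chosen image vertices is automatic from (b), and one must be careful that $g(w_0),\dots,g(w_k)$ are allowed to repeat (so the image simplex of $\sigma$ may have smaller dimension), which is perfectly compatible with Definition \ref{v2}. Everything else is a Lebesgue-number estimate plus the mesh-shrinking property of iterated barycentric subdivision, both of which are standard on a finite complex.
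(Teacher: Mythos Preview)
The paper does not supply its own proof of this statement; it is quoted verbatim from \cite[\rom{4} Theorem~22.10]{Bredon} and used as a black box in the proof of Lemma~\ref{y2}. Your argument is the standard open-star/Lebesgue-number proof of the Simplicial Approximation Theorem (exactly the one Bredon gives), and it is correct.
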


 \noindent{\bf Proof of Lemma \ref{y2}.} Assume that $f:|K|\rightarrow |K|$ is homotopic to the identity map. We take a non-zero $r$-cycle $\sigma=\sum_{i=1}^k\sigma_i \in H_r(|K|;\Z_2)=\{\tau \in C_r(K;\Z_2) | \partial_r \tau=0\}$, where $\sigma_i$ is a $r$-simplex of $K$.  If $\cup_{i=1}^k\big(\sigma_i(\Delta_r)\big)\not\subset f(|K|)$, since $K$ is a finite simplicial complex, there is a number $\delta>0$ and $x_0\in\cup_{i=1}^k\big(\sigma_i(\Delta_r)\big)$ such that $\operatorname{dist}(x_0, f(|K|))>\delta.$ Subdivide $K$ so that mesh$(K)<\delta/2$. We may assume that this holds for $K$, and that for such $K$ (mesh$(K)<\delta/2$), there exist $\delta>0$ and $x_0\in\cup_{i=1}^k\big(\sigma_i(\Delta_r)\big)$ such that $\operatorname{dist}(x_0, f(|K|))>\delta$. By Theorem \ref{v1}, there is a simplicial approximation $g:K^{[s]}\rightarrow K$ to $f$, where $s\ge 0$. It follows that for $x\in|K|$, we have $g(x)\in \mbox{carr}(f(x))$, that is, $\operatorname{dist}(g(x),f(x))\le\mbox{mesh}(K)$. We claim that $\cup_{i=1}^k \mbox{Int}\big(\sigma_i(\Delta_r)\big)\subset g(|K|)$. If the claim does not hold true, we may assume that there exists $z\in \mbox{Int}\big(\sigma_1(\Delta_r)\big)$ such that $z\notin g(|K|)$. Since $g$ is a simplicial map, we have
\begin{equation}\label{upr}
\mbox{Int}\big(\sigma_1(\Delta_r)\big)\cap g(\upsilon)=\emptyset \mbox{ for all } \upsilon\in K^{[s]}.
\end{equation}  Since $f$ is homotopic to the identity map, by Corollary \ref{corol11}, $(g)_\ast:H_r(|K|;\Z_2)\rightarrow H_r(|K|;\Z_2)$ is an isomorphism. Then for $\sigma\in H_r(|K|;\Z_2)$, there exists $\tilde{\sigma}\in H_r(|K|;\Z_2)$ such that $g_\ast(\tilde{\sigma})=\sigma$. Since $K$ is simplicial $r$-complex, we have $g_\Delta(\tilde{\sigma})=\sigma$ in $C_r(K;\Z_2)$, which is a contradiction to \eqref{upr}. Indeed, writing $\tilde{\sigma}=\sum_{j=1}^{\ell} \tilde{\sigma}_j=\sum_{j=1}^\ell\langle v_0^j, \cdots, v_r^j\rangle$, we see that $g_\Delta(\tilde{\sigma})=\sum_{j=1}^\ell \langle g(v_0^j),\cdots, g(v_r^j)\rangle$. Then, by \eqref{upr}, $\langle g(v_0^j),\cdots, g(v_r^j)\rangle\neq \sigma_1$ in $C_r(K;\Z_2)$ for all $j=1, \cdots, \ell$. Thus, we deduce that  for  $z_0\in |K|$ satisfying $x_0=g(z_0)$,
$$
\delta<\operatorname{dist}(x_0, f(|K|))\le  \operatorname{dist}(x_0, f(z_0))\le \operatorname{dist}(x_0, g(z_0))+ \operatorname{dist}(f(z_0), g(z_0))<\frac{\delta}{2},
$$
 which is a contradition.
$\Box$
\begin{cor}\label{iu}
Let  $\phi:|K|\rightarrow X$ be a homeomrphism, where $X$ is a space and $|K|$ is  a finite $r$-dimensional polyhedron  with $H_r(|K|;\Z_2)\neq 0$ for some $r\in \N$. Define 
\[
\bar{\mathcal{C}}=\{\bar{f}:X\rightarrow X \mbox{, continuous } |\ \bar{f} \mbox{ is homotopic to the identity map }\}.
\]  
 We  take a non-zero $r$-cycle $\sigma=\sum_{i=1}^k\sigma_i\in H_r(|K|;\Z_2)=\{\tau \in C_r(K; \Z_2) | \partial_r \tau=0\}$, where $\sigma_i$ is a $r$-simplex of $K$.  Then  for all $\bar{f}\in \bar{\mathcal{C}},$
\[
 \cup_{i=1}^k (\phi\circ{\sigma}_i)(\Delta_r)\subset \bar{f}(X),
\]
where $\Delta_r=\{x=\sum_{i=0}^r \lambda_i e_i\ |\ \sum_{i=0}^r\lambda_i=1,0\le \lambda_i\le 1\}$, and $e_0, e_1, \cdots, e_r$ are the standard basis of $\R^{r+1}$.
\end{cor}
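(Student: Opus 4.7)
The plan is to reduce Corollary \ref{iu} directly to Lemma \ref{y2} by conjugating with the homeomorphism $\phi$. Given $\bar{f}\in\bar{\mathcal{C}}$, I would define
\[
f=\phi^{-1}\circ\bar{f}\circ\phi:|K|\to|K|,
\]
and observe that if $H:X\times[0,1]\to X$ is a homotopy between $\bar{f}$ and $\operatorname{id}_X$, then $\phi^{-1}\circ H\circ(\phi\times\operatorname{id}):|K|\times[0,1]\to|K|$ is a homotopy between $f$ and $\operatorname{id}_{|K|}$. Hence $f\in\mathcal{C}$ in the sense of Lemma \ref{y2}.

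Applying Lemma \ref{y2} to $f$ and to the cycle $\sigma=\sum_{i=1}^k\sigma_i$, I obtain
\[
\bigcup_{i=1}^{k}\sigma_i(\Delta_r)\subset f(|K|)=\phi^{-1}\bigl(\bar{f}(\phi(|K|))\bigr)=\phi^{-1}\bigl(\bar{f}(X)\bigr),
\]
using that $\phi$ is a homeomorphism (so surjective onto $X$). Applying $\phi$ to both sides yields
\[
\bigcup_{i=1}^{k}(\phi\circ\sigma_i)(\Delta_r)=\phi\Bigl(\bigcup_{i=1}^{k}\sigma_i(\Delta_r)\Bigr)\subset\bar{f}(X),
\]
which is exactly the claim.

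There is essentially no obstacle beyond verifying that the homology hypothesis and the homotopy property transfer cleanly through the conjugation by $\phi$; both are immediate because $\phi$ is a homeomorphism. The only minor point to note is that the statement of Lemma \ref{y2} is formulated for continuous self-maps of $|K|$ that are homotopic to the identity, and no simplicial structure on $X$ is required, so the transfer through $\phi$ is enough to conclude.
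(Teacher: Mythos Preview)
Your proof is correct and follows essentially the same approach as the paper: conjugate $\bar f$ by the homeomorphism $\phi$ to obtain $f=\phi^{-1}\circ\bar f\circ\phi$, check that $f$ is homotopic to the identity on $|K|$, apply Lemma \ref{y2}, and then push forward by $\phi$. Your write-up even spells out the homotopy explicitly, which the paper leaves implicit.
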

\begin{proof}
Let $\phi:|K|\rightarrow X$ be a homeomorphism and $0\neq \sigma=\sum_{i=1}^k\sigma_i\in H_r(|K|;\Z_2)=\{\tau \in C_r(K; \Z_2) | \partial_r \tau=0\}$. If $\bar{f}:X\rightarrow X$ is homotopic to the identity map, then  $f\equiv \phi^{-1}\circ \bar{f}\circ \phi:|K|\rightarrow |K|$ is  homotopic to the identity map.
By Lemma \ref{y2}, we see that $\cup_{i=1}^\ell \big(\sigma_i(\Delta_r)\big) \subset f(|K|)$. Thus, we have $\cup_{i=1}^k \phi\big(\sigma_i(\Delta_r)\big)=\phi\big(\cup_{i=1}^k \sigma_i(\Delta_r)\big) \subset \phi\circ f(|K|)=\bar{f}(X)$. 
\end{proof}
\begin{rmk}
In Lemma \ref{y2}, the condition that $K$ is a finite simplicial $r$-complex  with $H_r(|K|;\Z_2)\neq 0$ for some $r\in \N$ is necessary. Indeed, denoting $A(1,2)=\{x\in \R^2\ | \ 1\le |x|\le 2\}$, we define a map $f:A(1,2)\rightarrow A(1,2)$ by $f(x)=x/|x|$, where $f$ is homotopic to the identity map. We take $\{\sigma_i\}_{i=1}^k$ such that $\sigma_i:\Delta_1\rightarrow A(1,2)$ is a singular $1$-simplex of $A(1,2)$ and  $\cup_{i=1}^k \sigma_i(\Delta_1)=\{x\in \R^2\ |\ |x|=2\}$. Then we see that $0\neq [[\sigma]]=[[\sum_{i=1}^k \sigma_i]]\in H_1(A(1,2);\Z_2)$, but $f(A(1,2))\cap \{x\in \R^2\ |\ |x|=2\}$ is empty.
\end{rmk}

Following \cite{Bredon}, we denote $M$ a topological $n-$manifold. Let $A\subset M$ be a closed set and let $x\in A$. Let $G$ be any coefficient group and denote by
\[
j_{x,A}:H_n(M,M\setminus A; G)\rightarrow H_n(M, M\setminus\{x\}; G),
\]
the map induced from the inclusion.
\begin{prop}\cite[\rom{6} Proposition 7.1]{Bredon}\label{w2}
  If $A$ is a compact, convex subset of $\R^n\subset M$ then $j_{x,A}$ is an isomorphism and both groups are isomorphic to $G$.
\end{prop}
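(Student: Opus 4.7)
The plan is to reduce the statement to a computation in Euclidean space via excision, then identify both local homology groups with the reduced $(n-1)$-homology of a complement, and finally use convexity of $A$ to exhibit a compatible deformation retract onto a large sphere.

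First I would apply excision. Since $A\subset \R^n\subset M$ and $A$ is compact, the complement $M\setminus \R^n$ is closed and contained in the open set $M\setminus A$; excising it gives a natural isomorphism $H_n(M, M\setminus A; G)\cong H_n(\R^n, \R^n\setminus A; G)$. The same argument with $\{x\}$ in place of $A$ yields $H_n(M, M\setminus\{x\}; G)\cong H_n(\R^n, \R^n\setminus\{x\}; G)$, and naturality of excision identifies $j_{x,A}$ with the map induced by the inclusion of pairs $(\R^n, \R^n\setminus A)\hookrightarrow (\R^n, \R^n\setminus\{x\})$.

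Next I would feed this into the long exact sequence of each pair and use contractibility of $\R^n$. Since $\tilde H_k(\R^n; G)=0$ for all $k$, the connecting homomorphism is a natural isomorphism
\[
\partial:H_n(\R^n, \R^n\setminus A; G)\xrightarrow{\;\cong\;}\tilde H_{n-1}(\R^n\setminus A; G),
\]
and similarly for $\{x\}$. By naturality, the problem reduces to showing that the inclusion $\iota:\R^n\setminus A\hookrightarrow \R^n\setminus\{x\}$ induces an isomorphism $\iota_\ast:\tilde H_{n-1}(\R^n\setminus A; G)\xrightarrow{\;\cong\;}\tilde H_{n-1}(\R^n\setminus\{x\}; G)\cong G$.

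For the final step I would use convexity of $A$ to construct a deformation retract compatible with $\iota$. Fix $R>0$ with $A\subset B(0,R)$. Choose a point $x_0$ in the relative interior of $A$; then for any $y\in \R^n\setminus A$, convexity of $A$ implies that the ray $\{x_0+t(y-x_0)\mid t\ge 0\}$ meets $\partial A$ in a single point on the segment from $x_0$ to $y$ and then exits $A$ permanently, eventually meeting $\partial B(0,R)$. Using this ray structure one defines a strong deformation retract of $\R^n\setminus A$ onto $\partial B(0,R)\simeq S^{n-1}$, and the analogous radial retraction (from $x_0$, or equivalently from $x$ after a small homotopy) gives a strong deformation retract of $\R^n\setminus\{x_0\}\supset \R^n\setminus A$ onto the same sphere. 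Because both retractions restrict to the identity on $\partial B(0,R)$, the map $\iota$ becomes homotopic to the identity of $S^{n-1}$, hence $\iota_\ast$ is the identity on $\tilde H_{n-1}(S^{n-1}; G)\cong G$. This simultaneously shows both groups are isomorphic to $G$ and that $j_{x,A}$ is an isomorphism.

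The main obstacle is the last step: building one deformation that retracts both $\R^n\setminus A$ and $\R^n\setminus\{x_0\}$ onto the \emph{same} sphere so that $\iota$ becomes the identity on $S^{n-1}$. When $x$ lies in the interior of $A$ this is immediate via radial projection from $x$; the delicate case is when $x\in \partial A$ or when $A$ has empty interior, where one must first replace $x$ by an interior point $x_0$ of $A$ (using that $\R^n\setminus\{x\}$ and $\R^n\setminus\{x_0\}$ are both homotopy equivalent to $S^{n-1}$ via the translation $x\mapsto x_0$), and then verify that convexity ensures every ray from $x_0$ has the clean ``one entry, one exit'' behavior needed to define the retraction of $\R^n\setminus A$. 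Once this geometric lemma is in place, the rest of the argument is formal.
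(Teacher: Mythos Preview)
The paper does not give its own proof of this proposition; it merely cites it from Bredon \cite[\rom{6} Proposition 7.1]{Bredon}. Your argument is correct and is precisely the standard excision-plus-radial-deformation proof one finds there: excise $M\setminus\R^n$, pass to $\tilde H_{n-1}$ of the complements via the long exact sequence and contractibility of $\R^n$, and then use convexity to retract both complements onto a common large sphere so that the inclusion becomes the identity.

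One small simplification: your concern about $x\in\partial A$ or $A$ having empty interior is unnecessary. Radial projection from the given point $x\in A$ itself already works. For any $y\in\R^n\setminus A$, the ray from $x$ through $y$ leaves the compact convex set $A$ exactly once and never returns, and $y$ lies beyond that exit point; hence sliding $y$ outward along this ray to the sphere $\partial B(x,R)$ (with $R$ large enough that $A\subset B(x,R)$) stays in $\R^n\setminus A$ throughout. The same formula simultaneously gives the deformation retract of $\R^n\setminus\{x\}$ onto $\partial B(x,R)$, so the inclusion is literally the identity on the sphere and no auxiliary point $x_0$ or translation is needed.
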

\begin{defn}\label{w1}
Let $\Theta_x\otimes G=H_n(M, M\setminus \{x\}; G)\approx G.$ Also let $\Theta\otimes G=\cup\{ \Theta_x\otimes G|x\in M\}$ (disjoint) and let $p:\Theta\otimes G\rightarrow M$ be the function taking $\Theta_x\otimes G$ to $x$. Give $\Theta\otimes G$ the following topology: Let $U\subset M$ be open and $\alpha\in H_n(M, M\setminus\overline{U};G).$ Then for $x\in U, j_{x,\overline{U}}(\alpha)\in \Theta_x\otimes G$. Let $U_\alpha=\{ j_{x,\overline{U}}(\alpha)|x\in U\}\subset p^{-1}(U)\subset \Theta\otimes G$. Take the $U_\alpha$ as a basis for the topology on $\Theta \otimes G$.
\end{defn}
\begin{prop}\cite[\rom{6} Proposition 7.3]{Bredon}
  The sets $U_\alpha$ defined in Definition \ref{w1} are the basis of a topology.
\end{prop}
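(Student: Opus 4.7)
The goal is to check the two basis axioms: that the sets $U_\alpha$ cover $\Theta\otimes G$, and that finite intersections are unions of sets of the same form. Proposition \ref{w2} does all the heavy lifting.

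\textbf{Covering.} Given $\theta\in\Theta_x\otimes G=H_n(M,M\setminus\{x\};G)$, I will choose a chart around $x$ and a small open ball $U\subset\R^n\subset M$ with $x\in U$ and $\overline{U}$ compact and convex. By Proposition \ref{w2} the map $j_{x,\overline{U}}:H_n(M,M\setminus\overline{U};G)\to H_n(M,M\setminus\{x\};G)$ is an isomorphism, so there is a (unique) $\alpha\in H_n(M,M\setminus\overline{U};G)$ with $j_{x,\overline{U}}(\alpha)=\theta$. Then $\theta\in U_\alpha$, proving $\bigcup U_\alpha=\Theta\otimes G$.

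\textbf{Intersection axiom.} Suppose $\theta\in U_\alpha\cap V_\beta$ with $U,V\subset M$ open, $\alpha\in H_n(M,M\setminus\overline{U};G)$, $\beta\in H_n(M,M\setminus\overline{V};G)$, and let $x_0\in U\cap V$ be such that $\theta=j_{x_0,\overline{U}}(\alpha)=j_{x_0,\overline{V}}(\beta)$. I will pick a small open ball $W\subset U\cap V$ around $x_0$ lying in a chart, with $\overline{W}$ compact and convex in $\R^n$. The inclusions $M\setminus\overline{U}\subset M\setminus\overline{W}$ and $M\setminus\overline{V}\subset M\setminus\overline{W}$ induce restriction maps
\[
r_U:H_n(M,M\setminus\overline{U};G)\to H_n(M,M\setminus\overline{W};G),\quad r_V:H_n(M,M\setminus\overline{V};G)\to H_n(M,M\setminus\overline{W};G),
\]
and naturality of the long exact sequence gives $j_{x,\overline{U}}(\alpha)=j_{x,\overline{W}}(r_U\alpha)$ for all $x\in W$, and similarly for $\beta$. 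At $x=x_0$ this forces $j_{x_0,\overline{W}}(r_U\alpha)=\theta=j_{x_0,\overline{W}}(r_V\beta)$; since by Proposition \ref{w2} the map $j_{x_0,\overline{W}}$ is an isomorphism, $r_U\alpha=r_V\beta=:\gamma$.

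\textbf{Conclusion.} With this common $\gamma\in H_n(M,M\setminus\overline{W};G)$, I claim $\theta\in W_\gamma\subset U_\alpha\cap V_\beta$. Indeed $j_{x_0,\overline{W}}(\gamma)=\theta$, and for any $x\in W$ the naturality identities give $j_{x,\overline{W}}(\gamma)=j_{x,\overline{U}}(\alpha)\in U_\alpha$ and $j_{x,\overline{W}}(\gamma)=j_{x,\overline{V}}(\beta)\in V_\beta$. Both basis axioms hold, so the $U_\alpha$ form a basis. There is no serious obstacle here; the one thing to be careful about is the naturality step that identifies $j_{x,\overline{U}}(\alpha)$ with $j_{x,\overline{W}}(r_U\alpha)$, which is just the functoriality of the restriction maps in the relative homology long exact sequence.
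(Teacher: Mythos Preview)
The paper does not give its own proof of this proposition; it merely cites Bredon \cite[\rom{6} Proposition 7.3]{Bredon}. Your argument is correct and is precisely the standard proof: use Proposition~\ref{w2} on a small Euclidean ball to get surjectivity of $j_{x,\overline U}$ for the covering axiom, and for the intersection axiom factor both $j_{x,\overline U}$ and $j_{x,\overline V}$ through $j_{x,\overline W}$ for a small ball $W\subset U\cap V$, then use injectivity of $j_{x_0,\overline W}$ (again Proposition~\ref{w2}) to match the restrictions.
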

\begin{defn}
For $A\subset M$ closed, the group of sections over $A$ of $\Theta \otimes G$ is
\[
\Gamma(A,\Theta\otimes G)=\{ s:A\rightarrow\Theta\otimes G, \mbox{ continuous }\ |\ p \circ s=1\}.
\]
This is an abelian group under the operation $(s+s^\prime)(x)=s(x)+s^\prime(x).$ Also, let $\Gamma_c(A,\Theta\otimes G)$ be the subgroup consisting of sections with compact support, i.e., those sections with value $0$ outside some compact set.
\end{defn}
\begin{thm}\cite[\rom{6} Theorem 7.8]{Bredon}\label{w3}
  Let $M$ be a topological $n$-manifold and let $A\subset M$ be closed. Then the map $J_A: H_n(M, M\setminus A; G)\rightarrow \Gamma_c(A,\Theta\otimes G)$ given by $J_A(\alpha)(x)=j_{x,A}(\alpha)$ is an isomorphism.
\end{thm}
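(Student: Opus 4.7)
The plan is to verify that $J_A$ is a well-defined natural transformation, then establish that it is an isomorphism by a bootstrapping argument from the convex case in Proposition \ref{w2}, using Mayer-Vietoris and the five lemma.

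First I would confirm well-definedness of $J_A$. For $\alpha\in H_n(M, M\setminus A; G)$, the map $s(x) = j_{x,A}(\alpha)$ is clearly a set-theoretic section of $p\colon\Theta\otimes G\to M$; its continuity is automatic from the definition of the topology in Definition \ref{w1}, since near each $x$ the values $s(y)$ for $y$ in a small neighborhood $U$ lie in the basic open set $U_\alpha$ determined by $\alpha$ and $\overline{U}$. For compact support, represent $\alpha$ by a relative cycle $z$ with compact support $K\subset M$; whenever $x\notin K$ the cycle $z$ lies entirely in $M\setminus\{x\}$, so $s(x)=0$.

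Next I would reduce to the case where $A$ is compact. The group $\Gamma_c(A,\Theta\otimes G)$ is by definition a direct limit over the compact subsets of $A$ of the groups of sections supported there, and likewise $H_n(M, M\setminus A; G)$ is a direct limit of $H_n(M, M\setminus K; G)$ over compact $K\subset A$ (since relative cycles have compact support). The natural transformation $J$ commutes with these direct limits, so it suffices to treat compact $A$.

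For compact $A$ I would argue by induction based on Mayer-Vietoris. If $A=B\cup C$ with $B,C$ compact, one has a commutative ladder between the Mayer-Vietoris sequence
\[
\cdots\to H_n(M,M\setminus A;G)\to H_n(M,M\setminus B;G)\oplus H_n(M,M\setminus C;G)\to H_n(M,M\setminus(B\cap C);G)\to\cdots
\]
and a corresponding exact sequence for sections of $\Theta\otimes G$ over $A$, $B$, $C$, $B\cap C$, connected vertically by $J_A, J_B, J_C, J_{B\cap C}$. Granting the theorem for $B$, $C$, and $B\cap C$, the five lemma yields it for $A$. This inductive step reduces the problem to the case of a single compact convex subset of a coordinate neighborhood homeomorphic to $\R^n$, which is exactly Proposition \ref{w2}; since any compact subset of $M$ may be covered by finitely many such convex compacta in charts, finitely many applications of the Mayer-Vietoris step conclude.

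The main obstacle will be setting up the Mayer-Vietoris ladder rigorously: one must verify exactness of the section sequence (exploiting that $\Theta\otimes G$ is a sheaf whose stalks are $G$ and whose sections over closed sets are detected purely locally), and confirm that the connecting homomorphism on the homology side corresponds under $J$ to the boundary map on the section side. A secondary subtlety is the careful handling of compact supports throughout, which is exactly what forces the finite cover step at the end rather than a more general open-cover argument.
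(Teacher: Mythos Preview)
The paper does not prove this theorem; it is quoted verbatim from Bredon (Chapter~VI, Theorem~7.8) and invoked only as a black box in the proof of Lemma~\ref{y1}. There is therefore no proof in the paper itself to compare your attempt against.

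Your outline is essentially Bredon's own argument and is correct in architecture: well-definedness of $J_A$, reduction to compact $A$ via direct limits, and a Mayer--Vietoris/five-lemma bootstrap from Proposition~\ref{w2}. One step, however, needs more than you indicate. You write that ``any compact subset of $M$ may be covered by finitely many such convex compacta in charts, finitely many applications of the Mayer--Vietoris step conclude.'' But the Mayer--Vietoris induction on $A=B\cup C$ requires the theorem for $B\cap C$ as well, and the intersection of two compact convex sets lying in \emph{different} coordinate charts need not be convex, nor a finite union of convex sets, so the induction does not close on the class of finite unions of convex compacta across charts. Bredon fills this gap with an additional continuity step: he first establishes the isomorphism for an \emph{arbitrary} compact subset of a single chart $\R^n\subset M$, by writing it as a decreasing intersection of finite unions of cubes and using that both $K\mapsto H_n(M,M\setminus K;G)$ and $K\mapsto\Gamma(K,\Theta\otimes G)$ take such nested intersections to direct limits. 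Once the single-chart case is known for all compacta, the Mayer--Vietoris induction over the number of charts goes through cleanly, since every intersection arising in the induction then lies inside one chart and is covered by that case.
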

\begin{cor}\cite[\rom{6} Corollary 7.12]{Bredon}
If $M$ is a connected compact topological  $n$-manifold without boundary, then $H_n(M;\Z_2)=\Z_2$
\end{cor}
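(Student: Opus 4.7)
The plan is to apply Theorem \ref{w3} with $A = M$ and coefficients $G = \Z_2$ in order to translate the homology computation into one about continuous global sections of $\Theta \otimes \Z_2$. First I would observe that $M \setminus M = \emptyset$, so that $H_n(M, M\setminus M; \Z_2) = H_n(M; \Z_2)$, and that the compactness of $M$ forces $\Gamma_c(M, \Theta \otimes \Z_2) = \Gamma(M, \Theta \otimes \Z_2)$. Theorem \ref{w3} therefore gives an isomorphism
\[
H_n(M; \Z_2) \;\cong\; \Gamma(M, \Theta \otimes \Z_2),
\]
reducing the whole problem to counting global sections.

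Next I would show that the projection $p : \Theta \otimes \Z_2 \to M$ is a $2$-sheeted covering map. Each fiber $\Theta_x \otimes \Z_2 = H_n(M, M\setminus\{x\}; \Z_2)$ is isomorphic to $\Z_2$, so has exactly two elements. To establish local triviality around an arbitrary $x_0 \in M$, I would choose a Euclidean chart and inside it a compact convex neighborhood $\overline{U}$ of $x_0$ with nonempty interior $U$. Proposition \ref{w2} then yields $H_n(M, M\setminus \overline{U}; \Z_2) \cong \Z_2 = \{0, \alpha\}$, and by the prescription of Definition \ref{w1} the basic open sets $U_0$ and $U_\alpha$ together partition $p^{-1}(U)$, each mapping bijectively, continuously, and openly onto $U$. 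Thus $p$ is a $2$-fold covering.

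Finally, a section of $p$ is nothing other than a continuous lift of the identity $M \to M$ through this covering, and because $M$ is connected, covering space theory guarantees that any such lift is determined by its value at a single point. The zero section $s_0(x) = 0$ is one such lift; assigning to each $x \in M$ the unique nonzero element of $\Theta_x \otimes \Z_2$ gives a second lift, whose continuity follows by observing that on each trivializing neighborhood $U$ constructed above it coincides with $y \mapsto j_{y,\overline{U}}(\alpha)$. Hence $\Gamma(M, \Theta \otimes \Z_2)$ has exactly two elements and is therefore isomorphic to $\Z_2$. The main technical point is the verification of local triviality in the second step; here the $\Z_2$ coefficients are essential, since over $\Z$ one would have to contend with potential non-orientability of $M$ and the ``nonzero part'' of the sheaf would not in general carry a consistent choice of generator.
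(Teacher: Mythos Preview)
The paper does not supply its own proof of this corollary; it simply quotes \cite[\rom{6} Corollary 7.12]{Bredon}. Your argument is correct and is precisely the line of reasoning Bredon uses: apply Theorem \ref{w3} with $A=M$, use compactness to replace $\Gamma_c$ by $\Gamma$, and then count sections of the two-sheeted cover $\Theta\otimes\Z_2\to M$ over the connected base $M$. The same ingredients (uniqueness of a section from its value at one point, together with Theorem \ref{w3}) are in fact invoked verbatim in the paper's proof of Lemma \ref{y1}, so your proposal is fully consistent with the tools the paper has set up.
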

\noindent{\bf Proof of Lemma \ref{y1}.}  Suppose that there exists $x\in M\setminus f(M)$. Then we may assume that there exists a compact, convex set $U$ of $\R^n\subset M$ such that $U\cap f(M)=\emptyset$. For the inclusions $i:M\setminus U\hookrightarrow M$ and $j:M \hookrightarrow(M,M\setminus U)$, we have the commutative diagram
  \begin{equation}\label{u1}
  \xymatrix{ & H_n(M; \Z_2)\ar[d]_{\tilde{f}_\ast}\ar[dr]^{f_\ast}\\
   \cdots  \ar[r]^{\partial_\ast \, \qquad} & H_n(M\setminus U; \Z_2)\ar[r]^{\quad i_\ast}&  H_n(M; \Z_2) \ar[r]^{j_\ast\quad \,} & H_n(M, M\setminus U; \Z_2) \ar[r]^{\quad \qquad \partial_\ast}& \cdots.}
  \end{equation}
   Let $e_x:\Gamma(M, \Theta\otimes \Z_2)\rightarrow H_n(M,M\setminus \{x\}; \Z_2)$ be the evaluation map given by $e_x(s)=s(x)$. We note that, since $M$ is connected, each section is uniquely determined by its value at one point. Then, by Theorem \ref{w3}, we see that $e_x$ is an isomorphism. Therefore $j_{x,M}=e_x\circ J_M$ is an isomorphism. By Proposition \ref{w2}, we have the map $j_{x,U}: H_n(M, M\setminus U; \Z_2)\rightarrow H_n(M, M\setminus \{x\}; \Z_2)$ is an isomorphism. Therefore, from the following commutative diagram
  \[
  \xymatrix{H_n(M; \Z_2)\ar[rd]_{j_{x,M}} \ar[r]^{j_\ast\quad} &H_n(M, M\setminus U; \Z_2)\ar[d]^{j_{x,U}}\\
                     &H_n(M, M\setminus\{x\}; \Z_2),}
  \]
  we deduce that $j_\ast$ is an isomorphism. By the exactness of the sequence
  \[
  \cdots \xrightarrow{\partial_\ast} H_n(M\setminus U; \Z_2) \xrightarrow{i_\ast}  H_n(M; \Z_2) \xrightarrow{j_\ast} H_n(M, M\setminus U; \Z_2) \xrightarrow{\partial_\ast} \cdots,
  \]
  we deduce that $i_\ast=0$. Since $f$ is homotopic to the identity map, $f_\ast$ is an isomorphism. By the commutative diagram \eqref{u1}, we see that $f_\ast=i_\ast\circ \tilde{f}_\ast$. Then we obtain $H_n(M; \Z_2)=0$, which is a contradiction.
$\Box$


\begin{thebibliography}{00}
\bibitem{ABC} A. Ambrosetti, M. Badiale, S. Cingolani, Semiclassical states of nonlinear Schrödinger equations, Arch. Rational Mech. Anal.  140  (1997),  no. 3, 285-300.
\bibitem{AMN}
    A. Ambrosetti, A. Malchiodi, W.-M. Ni, Singularly perturbed elliptic equations with symmetry: existence of solutions concentrating on spheres, I, Comm. Math. Phys.  235  (2003),  no. 3, 427-466.
\bibitem{Bredon}
 G. E. Bredon, Topology and geometry, Graduate Texts in Mathematics, 139. Springer-Verlag, New York, 1993.
\bibitem{BL} H. Berestycki, P.-L. Lions, Nonlinear scalar field equations I. Existence of a ground state, Arch. Rational Mech. Anal. 82 (1983), no. 4, 313 - 345.
\bibitem{B}
 J. Byeon, Standing waves for nonlinear Schrödinger equations with a radial potential, Nonlinear Anal.  50  (2002),  no. 8, Ser. A: Theory Methods, 1135-1151.
\bibitem{BJ1} J. Byeon, L. Jeanjean, Standing waves for nonlinear Schrödinger equations with a general nonlinearity, Arch. Ration. Mech. Anal.  185  (2007),  no. 2, 185-200.
\bibitem{BJ2} J. Byeon, L. Jeanjean, Multi-peak standing waves for nonlinear Schrödinger equations with a general nonlinearity, Discrete Contin. Dyn. Syst.  19  (2007),  no. 2, 255-269.
\bibitem{BJT}
J. Byeon, L. Jeanjean, K. Tanaka, Standing waves for nonlinear Schrödinger equations with a general nonlinearity: one and two dimensional cases, Comm. Partial Differential Equations  33  (2008),  no. 4-6, 1113–1136.
\bibitem{BT1} J. Byeon, K. Tanaka, Semi-classical standing waves for nonlinear Schrödinger equations at structurally stable critical points of the potential, J. Eur. Math. Soc. (JEMS)  15  (2013),  no. 5, 1859-–1899.
\bibitem{BT2} J. Byeon, K. Tanaka, Semiclassical standing waves with clustering peaks for nonlinear Schrödinger
equations, Mem. Amer. Math. Soc. 229(1076) (2014)
\bibitem{BT3}
J. Byeon, K. Tanaka, Multi-bump positive solutions for a nonlinear elliptic problem in expanding tubular domains, Calc. Var. Partial Differential Equations  50  (2014),  no. 1-2, 365-397.
\bibitem{BW} J. Byeon, Z.-Q. Wang, Standing waves with a critical frequency for nonlinear Schrödinger equations, Arch. Ration. Mech. Anal.  165  (2002),  no. 4, 295-316.
\bibitem{BW2}
    J. Byeon, Z.-Q. Wang, Standing waves with a critical frequency for nonlinear Schrödinger equations, II, Calc. Var. Partial Differential Equations  18  (2003),  no. 2, 207-219.
\bibitem{CR} V. Coti Zelati, P. H. Rabinowitz, Homoclinic type solutions for a semilinear elliptic PDE on $\R^n$, Comm. Pure Appl. Math.  45  (1992),  no. 10, 1217-1269.
\bibitem{CJT}
S. Cingolani, L. Jeanjean, K. Tanaka, Multiplicity of positive solutions of nonlinear Schrödinger equations concentrating at a potential well, Calc. Var. Partial Differential Equations 53 (2015), no. 1-2, 413-439.
\bibitem{DLY} E. N. Dancer, K. Y. Lam,  S. Yan,  The effect of the graph topology on the existence of multipeak solutions for nonlinear Schrödinger equations, Abstr. Appl. Anal.  3  (1998),  no. 3--4, 293–318.
\bibitem{DPR}
P. d'Avenia, A. Pomponio, D. Ruiz, Semiclassical states for the nonlinear Schrödinger equation on saddle points of the potential via variational methods, J. Funct. Anal.  262  (2012),  no. 10, 4600-4633.

\bibitem{DF1}
M. del Pino, P. Felmer, Local mountain passes for semilinear elliptic problems in unbounded domains, Calc. Var. Partial Differential Equations  4  (1996),  no. 2, 121-–137.
\bibitem{DF2}
M. del Pino, P. Felmer, Semi-classical states for nonlinear Schrödinger equations, J. Funct. Anal.  149  (1997),  no. 1, 245-–265.
\bibitem{DF3}
M. del Pino, P. Felmer, Multi-peak bound states for nonlinear Schrödinger equations, Ann. Inst. H. Poincaré Anal. Non Linéaire  15  (1998),  no. 2, 127-149.
\bibitem{DF4}
M. del Pino, P. Felmer, Semi-classical states of nonlinear Schrödinger equations: a variational reduction method, Math. Ann.  324  (2002),  no. 1, 1-32.
\bibitem{DKW}
M. del Pino, M. Kowalczyk, J. Wei, Concentration on curves for nonlinear Schrödinger equations, Comm. Pure Appl. Math.  60  (2007),  no. 1, 113-146.


\bibitem{FW}     A. Floer, A. Weinstein,  Nonspreading wave packets for the cubic Schrödinger equation with a bounded potential, J. Funct. Anal.  69  (1986),  no. 3, 397-408.
\bibitem{GNN} B. Gidas, W. M. Ni, L. Nirenberg, Symmetry and related properties via the maximum principle, Comm. Math. Phys.  68  (1979), no. 3, 209-243.
\bibitem{H} M. W. Hirsch, Differential topology,  Graduate Texts in Mathematics, 33. Springer-Verlag, New York, 1994.  
\bibitem{JT} L. Jeanjean, K. Tanaka, A remark on least energy solutions in $\R^N$, Proc. Amer. Math. Soc.  131,  (2003),  no. 8, 2399-2408.
\bibitem{JT2}
    L. Jeanjean, K. Tanaka, Singularly perturbed elliptic problems with superlinear or asymptotically linear nonlinearities, Calc. Var. Partial 
Differential Equations  21  (2004),  no. 3, 287-318.
\bibitem{K}
M. A. Kervaire, A manifold which does not admit any differentiable structure, Comment. Math. Helv.  34  (1960) 257-270.
\bibitem{roche}
Z. Kopal, Close Binary Systems, Chapman and Hall, London (1959)
\bibitem{KW}
X. Kang, J. Wei, On interacting bumps of semi-classical states of nonlinear Schrödinger equations, Adv. Differential Equations  5  (2000),  no. 7--9, 899-928.
\bibitem{JML}
J. M. Lee,  Introduction to smooth manifolds, Second edition, Graduate Texts in Mathematics, 218. Springer, New York, 2013. 
\bibitem{L}
Y. Y. Li, On a singularly perturbed elliptic equation, Adv. Differential Equations  2  (1997),  no. 6, 955-980.
\bibitem{LS}
Y. Lee, J. Seok, Multiple interior and boundary peak solutions to singularly perturbed nonlinear Neumann problems under the Berestycki-Lions condition, Math. Ann.  367  (2017),  no. 1-2, 881-928.
\bibitem{M}
C. Manolescu, Pin(2)-equivariant Seiberg-Witten Floer homology and the triangulation conjecture, J. Amer. Math. Soc.  29  (2016),  no. 1, 147-176.
\bibitem{M1}
P. Meystre, Atom Optics, Springer, 2001.
\bibitem{M2}
D. L. Mills, Nonlinear Optics, Springer, 1998.
\bibitem{O1}
Y.-G. Oh, Existence of semiclassical bound states of nonlinear Schrödinger equations with potentials of the class $(V)_a$, Comm. Partial Differential Equations  13  (1988),  no. 12, 1499-1519.
\bibitem{O2}
Y.-G. Oh, On positive multi-lump bound states of nonlinear Schrödinger equations under multiple well potential, Comm. Math. Phys.  131  (1990),  no. 2, 223–253.
\bibitem{P}
R. S. Palais, The principle of symmetric criticality, Comm. Math. Phys.  69  (1979), no. 1, 19-30.
\bibitem{PS}
L. P. Pitaevskii,   S. Stringari,  Bose-Einstein condensation, International Series of Monographs on Physics, 116. The Clarendon Press, Oxford University Press, Oxford, 2003.
\bibitem{R}
P. H. Rabinowitz, On a class of nonlinear Schrödinger equations, Z. Angew. Math. Phys.  43  (1992),  no. 2, 270-291.
\bibitem{W}
X. Wang, On concentration of positive bound states of nonlinear Schrödinger equations, Comm. Math. Phys.  153  (1993),  no. 2, 229-244.
\bibitem{W1}
Z. Q. Wang, Existence and symmetry of multi-bump solutions for nonlinear Schrödinger equations, J. Differential Equations  159  (1999),  no. 1, 102-137.

\end{thebibliography}
\end{document}